\pgfplotsset{compat=newest}
\newtheorem{theorem}{Theorem}[section]
\newtheorem{lemma}[theorem]{Lemma}
\newtheorem{problem}[theorem]{Problem}
\newtheorem{prop}[theorem]{Proposition}
\newtheorem{corollary}[theorem]{Corollary}
\newtheorem{defn}[theorem]{Definition}
\newtheorem{example}[theorem]{Example}
\newtheorem{assumption}[theorem]{Assumption}
\theoremstyle{remark}
\newtheorem{remark}[theorem]{Remark}
\numberwithin{equation}{section}
\newcommand{\R}{\mathbb{R}}{}
\newcommand{\N}{\mathbb{N}}
\newcommand{\cA}{\mathcal{A}}
\newcommand{\cB}{\mathcal{B}}
\newcommand{\cC}{\mathcal{C}}
\newcommand{\cD}{\mathcal{D}}
\newcommand{\cH}{\mathcal{H}}
\newcommand{\cK}{\mathcal{K}}
\newcommand{\cT}{\mathcal{T}}
\newcommand{\cS}{\mathcal{S}}
\newcommand{\cU}{\mathcal{U}}
\newcommand{\cP}{\mathcal{P}}
\newcommand{\hb}{\widehat{b}}
\newcommand{\bt}{\widetilde{b}}
\newcommand{\hi}{\widehat{\imath}}
\newcommand{\hs}{\widehat{s}}
\newcommand{\hp}{\widehat{p}}
\newcommand{\hmu}{\widehat{\mu}}
\newcommand{\heta}{\widehat{\eta}}
\newcommand{\wt}{\widetilde}
\newcommand{\wi}{\bar {\imath}}
\newcommand{\wwi}{\widehat {\imath}}
\newcommand{\ws}{\bar {s}}
\newcommand{\toS}{\rightsquigarrow} 
\DeclarePairedDelimiterX{\inp}[2]{\langle}{\rangle}{#1, #2}
\newcommand{\stkout}[1]{\ifmmode\text{\sout{\ensuremath{#1}}}\else\sout{#1}\fi}
\begin{document}

\title[Viability and  control of a delayed SIR epidemic]{Viability and  control of a delayed SIR epidemic\\ with an ICU State Constraint}

\author{Dimitri Breda}
\author{Matteo Della Rossa}
\author{Lorenzo Freddi}
\address[Dimitri Breda, Matteo Della Rossa, Lorenzo Freddi]{Dipartimento di Scienze Matematiche, Informatiche e Fisiche,  Universit\`a di Udine, via delle Scienze 206, 33100 Udine, Italy}
\email{dimitri.breda@uniud.it, matteo.dellarossa@uniud.it, lorenzo.freddi@uniud.it}


\begin{abstract}
This paper studies viability and control synthesis for a delayed SIR epidemic. The model integrates a constant delay representing an incubation/latency time. The control inputs model non-pharmaceutical interventions, while an intensive care unit (ICU) state-constraint is introduced to reflect the healthcare system's capacity. The arising delayed control system is analyzed via functional viability tools, providing insights into fulfilling the ICU constraint through feedback control maps. 
In particular, we consider two scenarios: first, we consider the case of general continuous initial conditions. Then, as a further refinement of our analysis, we assume that the  initial conditions satisfy a Lipschitz continuity property, consistent with the considered model.
The study compares the (in general, sub-optimal) obtained control policies with the optimal ones for the delay-free case, emphasizing the impact of the delay parameter.    
The obtained results are supported and illustrated, in a concluding section, by numerical examples.
\end{abstract} 

\maketitle


\textbf{Keywords:}  SIR epidemic; state constraints; viability; optimal control; feedback control; delays; functional differential equations.

\textbf{2020 Mathematics Subject Classification:} 34K35, 49J21, 49J53, 92D30, 
 93C23.


\section{Introduction}

The  optimal control of epidemics (see, for instance~\cite{anderson1992infectious,Behncke,hansen2011optimal,Mart}) starting from the classical SIR model of Kermack and McKendrick (\cite{kermack1927contribution}), has been widely studied in recent years, also because of the COVID-19 pandemic, see~\cite{alvarez2020simple,AFGLL23,FGLX22,Ketch,Kruse,MR2022}.  In this paper, we focus  on the viability analysis and the synthesis of control strategies for the following  two-dimensional  SIR model  with delay:
\begin{equation}\label{eq:System}
\begin{cases}
\dfrac{ds}{dt}(t)=-b(t)\, s(t)\, i(t-h),
\\[2ex]
\dfrac{di}{dt}(t)= b(t)\, s(t)\, i(t-h)-\gamma i(t).
\\[2ex]
\end{cases}
\end{equation}
 Here $h>0$ is a  given positive number representing a constant delay, modeling the presence of an incubation/latency time, i.e., assuming a temporal gap between the disease contraction and the development of infectivity of average individuals. 
The parameter $\gamma\in(0,1)$ represents the natural (and constant) recovery rate, while the control parameters are measurable functions $b(t)\in U:=[\beta_\star,\beta]$ for some $0<\beta_\star\leq\beta<1$. The value $\beta$ models the natural transmission rate of the disease, while $\beta_\star$ represents the minimal attainable transmission rate obtained via non-pharmaceutical policies (social distancing, lockdowns, etc.) 
 The initial conditions are imposed by prescribing a pair of continuous functions $\phi=(i_\phi,s_\phi):[-h,0]\to \R^2$, and by requiring that 
 \begin{equation}\label{icphi}
 \mbox{$i(t)=i_\phi(t)$ and $s(t)=s_\phi(t)$ for all $t\in [-h,0]$.}
 \end{equation}
 The derivatives in
 \eqref{eq:System} are meant in a distributional sense and the trajectories are constructed from {\em admissible controls} $b\in \cU:=L^\infty((0,+\infty);U)$.  
%

The delayed version \eqref{eq:System} of the classical Kermack and McKendrick SIR model \cite{kermack1927contribution}, with a constant trasmission rate coefficient $b\equiv\beta$,
was introduced in \cite{DiLiddo86,BT1995}. This model has been widely studied, from the stability point of view, in several papers, as for instance \cite{YH2007,McC2010}. Optimization and optimal control problems have been more recently considered in~\cite{ZKJ2009} but always in the case of a constant transmission rate.
In \cite{BT1995}, the nonnegative constant $h$ represents a time during which the infectious agents develop in a vector and it is only after time $h$ that the infected vector
can infect a susceptible individual.  On the other hand, $h$ can also be regarded as a latency time only after which
the infected individual becomes able to transmit the infection. 
 In this slightly different 
 interpretation,  system~\eqref{eq:System} has been considered in the recent Covid-19 studies \cite{LMSW2020} and \cite{YYLZ2021} but, in the latter, with a constant transmission rate $b\equiv\beta$. 
We mention  \cite{Huan_etal2022,PW2022} among the recent studies about the impact of a latency period on Covid-19 transmission.

We consider system~\eqref{eq:System} under an  {\em intensive care unit (ICU) constraint} on the number of infected (cft. \cite{Kantner,Miclo,AvrFre22}), i.e., we require that 
\begin{equation}\label{ICU}
i(t)\le i_M\;\; \forall\,t\geq 0,
\end{equation}
for some fixed $0<i_M\le1$.
In this setting, the prescribed level $i_M$ represents an upper bound to the capacity of the health-care system to treat infected patients.

From a theoretical point of view, the delay system~\eqref{eq:System} together with the constraint~\eqref{ICU} can be seen as a \emph{differential delayed inclusion}, as defined in~\cite{Haddad81,HuPap95,FMM2018}. Within this approach, the analysis of~\eqref{eq:System}-\eqref{ICU} can be tackled by using the \emph{functional viability tools} introduced in~\cite{H1984} and well summarized in~\cite[Chapter 12]{Aubin2009}. For recent developments in this area see, for example,~\cite{FraMar16,FraHai18}. Functional viability analysis provides geometric characterizations of the notions of uncontrolled forward invariance and \emph{viability} (i.e., the property of being forward invariant \emph{under a suitable control}). Under some technical restrictions, these ideas allow us to provide an explicit state-space description of subsets of initial conditions for which the constraint~\eqref{ICU} is satisfied by solutions to~\eqref{eq:System}, under arbitrary or suitable controls actions.

Viability analysis, as said, implicitly highlights feasible control actions which allow to keep the solutions of~\eqref{eq:System} in the feasible region (defined by the constraint~\eqref{ICU}). Consequently, it also provides selection mechanisms for implementing such control actions, in the form of \emph{feedback control maps}. This selection procedure can also be tuned according to the minimization of a given cost functional of the form 
\begin{equation}\label{eq:CostFUnctionalIntro}
\int_0^{+\infty} G(\beta-b(t))\,dt,
\end{equation}
where $G$ is a continuous and strictly increasing scalar function. It is worth noting that, in some cases, it could be more effective to take $u(t):=\beta-b(t)$ as a control function in place of $b$, and the running cost simply becomes $G(u(t))$ (actually, we make this choice in Section \ref{Sec:Simul} which is devoted to examples and numerical simulations). Thus, summarizing, the performed viability analysis allows us to provide a (in general, sub-optimal) feedback strategy, which depends only on the current and past position of the state, for the optimal control problem~\eqref{eq:System}-\eqref{ICU}-\eqref{eq:CostFUnctionalIntro}. We formally compare our viability results  with those obtained in the delay-free case (see~\cite{AvrFre22,FG2023}), which can be recovered by taking the limit as $h \to 0$.

We point out that different approaches to the study of delayed optimal control problems have been proposed in the literature. Notably, we underline that recent researches studied various formulations of Pontryagin principles for delayed optimal control problems with state constraints, see~\cite{V2018,V2019,BV2017} and references therein. The arising conditions are  necessary for optimality of feasible control polices and rely on an advance-differential equation governing the adjoint states. On the other hand, differently from the delay-free case studied in~\cite{Fre20,AvrFre22,FG2023}, these Pontryagin-principle-based conditions have, up to now and for the problem under consideration, been unable to provide exact closed-form expression for the optimal control policy.  On the other hand, viability analysis provides itself  a satisfactory alternative and a flexible tool in building feedback control polices that only depends on the present and past values of the solutions. 

The paper is organized as follows. In Section~\ref{sec:Preliminaries} we introduce the main notation and definitions. We also provide a self-contained summary of viability analysis tools for delayed differential inclusions, basically borrowed from~\cite{H1984,Aubin2009}, together with a novel characterization result for some particular class of (functional) subsets. In Section~\ref{sec:QualitativeAnalysis} we provide a preliminary qualitative analysis of system~\eqref{eq:System}, comparing and underlying the peculiarities of this model with respect to the delay-free case. In Section~\ref{Sec:ViabilityAndControl} we provide the applications of viability analysis for system~\eqref{eq:System} under the state-constraint~\eqref{ICU}; considering two cases.
\begin{enumerate}[leftmargin=*]
\item First, we consider initial conditions as general continuous functions $\phi:[-h,0]\to \R^2$, satisfying the ICU constraint~\eqref{ICU}, and provide a complete viability analysis and the corresponding feedback control policy.
\item We then strengthen our hypothesis, supposing that initial conditions also satisfy a Lipschitz property compatible with the considered model. In this setting, we prove that when the delay converges to $0$ we are able to recover the viable regions  of the delay-free case studied in~\cite{AvrFre22,FG2023}.
\end{enumerate}
In Section~\ref{Sec:Simul} we illustrate our results with the aid of numerical examples, and some concluding remarks are provided  in Section~\ref{Sec:Conclu}. Some technical proofs are postponed in a final Appendix, to avoid breaking the flow of the presentation.

\paragraph{\textbf{Notation:}} We denote by $\R_+=[0,+\infty)$ the set of non-negative reals. Given $A\subset \R^n$ and $B\subset\R^m$, we denote by $\cC(A;B)$ and $\cC^1(A;B)$ the set of  continuous and continuously differentiable functions from $A$ to $B$, respectively.
Given $x\in \R^n$, the scalar $|x|$ is its Euclidean norm. Given two sets $A,B$, the notation $g:A\toS B$ stands for the \emph{set-valued map} $g:A\to \cP(B)$ where $\cP(B)$ is the power set of $B$.  Given a norm $\|\cdot\|$ on $\R^n$,  and $R>0$, denote by $B_{\|\cdot\|}(0,R)$ the closed ball (w.r.t. $\|\cdot\|$) of radius $R$. Given $v,w\in \R^n$, $\inp{v}{w}\in \R$ denotes the \emph{Euclidean scalar product} of $v$ and $w$. Given $x\in \R^n$, we denote by  $|x|_{\max}:=\max\{|x_1|,|x_2|,\dots, |x_n|\}$ its \emph{max (or infinity) norm}.

\section{Notation and Preliminaries}\label{sec:Preliminaries}
\subsection{The delayed SIR model: notation and existence of solutions}\label{subsec:Model}

Let us consider the delay system introduced in~\eqref{eq:System} 
with state constraint \eqref{ICU}.
Besides those already given in the introduction, we use the following notation: $x:=(s,i)\in \R^2$,  $\cC:=\cC([-h,0];\R^2)$. Moreover, the components of a given $\phi\in \cC$ will be denoted by  $s_\phi$ and $i_\phi$, that is, $\phi(t)=(s_\phi(t),i_\phi(t))$ for all $t\in [-h,0]$.  Let us define $f:\cC\times U\to \R^2$ by
\begin{equation}\label{eq:DefnDelSistema}
f(\phi,b):=\left( -b s_\phi(0) i_\phi(-h),\ bs_\phi(0)i_\phi(-h)-\gamma i_\phi(0) \right ),
\end{equation}
and introduce the set-valued map $F_U:\cC\toS  \R^2$ defined by
\begin{equation}\label{eq:DefnDiffInclSIR}
F_U(\phi):=\{f(\phi,b)\ \vert\ b\in [\beta_\star, \beta]\}.
\end{equation}
It can be seen that $F_U$ has convex and compact values, it is locally bounded and upper semicontinuous  
(we refer to~\cite{RockWets09,Aubin2009} for these general concepts).

For any $t\in \R$, consider the map 
\begin{equation}\label{S(t)}
\begin{aligned}
S(t):\cC(\R;\R^2)\to &\ \cC\\
x\mapsto &\ S(t)x\\
&\ [S(t)x](s):=x(t+s)\;\; \forall\,s\in [-h,0].
\end{aligned}
\end{equation}
In the delay-systems community, the simplified notation $S(t)x:=x_t$  is often used.
As the control changes in the space $\cU$, the class of Cauchy problems~\eqref{eq:System}-\eqref{icphi} can thus be rewritten as  a \emph{delayed differential inclusion} of the form
\begin{equation}\label{eq:InfiniteDimensionalSetting}
\begin{cases}
x'(t)\in F_U(S(t)x),\\
S(0)x=\phi\in \cC.
\end{cases}
\end{equation}
According to \cite{Haddad81}, a \emph{solution} to~\eqref{eq:InfiniteDimensionalSetting} is a continuous function  $x_\phi:[-h,\tau)\to \R^2$, with $\tau>0$,  satisfying~\eqref{eq:InfiniteDimensionalSetting} for almost all $t\in  [0,\tau)$ and  such that $x_\phi$ is absolutely continuous on every compact subinterval of $[0,\tau)$. As done for the initial condition, the two components of $x_\phi$ will be denoted by  $s_\phi$ and $i_\phi$, that is, $x_\phi(t)=(s_\phi(t),i_\phi(t))$ for all $t\in [-h,\tau)$. This is consistent with the fact that $x_\phi(t)=\phi(t)$ whenever $t\in[-h,0]$.  

As a preliminary result, we show that, for a remarkable class of initial conditions, solutions exist and are globally defined.

Let us define the triangle \[
T:=\{x\in \R^2\, \vert\, s\geq 0,\, i\geq 0,\, s+i\leq 1\}
\]
and introduce the set $\cT:=\cC([-h,0]; T)$. 

\begin{lemma}\label{lemma:Well-PosednessLemma}
For any initial condition $\phi \in \cT$ and for any control input  $b\in \cU$, there exists a unique solution $x_{\phi}:[-h,+\infty)\to \R^2$ to the Cauchy problem~\eqref{eq:System}-\eqref{icphi}. Moreover,  $x_{\phi}(t)\in T$ for every $t\in[-h,+\infty)$.
We will denote this solution also with $x_{\phi,b}$ (or $x^{\phi,b}$), when we want to stress the fact that it corresponds to the prescribed input $b$. 
\end{lemma}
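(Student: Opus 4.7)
The plan is to use the standard \emph{method of steps} for delay differential equations, together with a step-by-step verification that the triangle $T$ is forward invariant. Since $\phi(t)\in T$ for $t\in[-h,0]$ by assumption, it suffices to build the solution on $[0,+\infty)$.

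On the interval $[0,h]$ the delayed argument satisfies $t-h\in[-h,0]$, so $i(t-h)=i_\phi(t-h)$ is a \emph{known} continuous function. Consequently, system~\eqref{eq:System} reduces on $[0,h]$ to the non-delayed system
\begin{equation*}
s'(t)=-b(t)\,i_\phi(t-h)\,s(t), \qquad i'(t)=b(t)\,i_\phi(t-h)\,s(t)-\gamma\,i(t),
\end{equation*}
which is linear in $(s,i)$ with essentially bounded (and measurable) coefficients, since $b\in L^\infty$ and $i_\phi\in\cC$. Carathéodory's existence-uniqueness theorem then yields a unique absolutely continuous solution on $[0,h]$ for every choice of $b\in\cU$. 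The very same argument applies to the successive intervals $[kh,(k+1)h]$ once the solution on the preceding interval is known, so the unique solution extends iteratively to all of $[-h,+\infty)$ provided it stays bounded; the invariance claim below takes care of this.

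The main step is the invariance of $T$. I would prove $x_\phi(t)\in T$ for $t\in[kh,(k+1)h]$ by induction on $k\ge 0$, the case $k=0$ being the assumption $\phi\in\cT$. Assuming the inductive hypothesis on $[(k-1)h,kh]$, set $c(t):=b(t)\,i_\phi(t-h)\ge 0$ on $[kh,(k+1)h]$ (using that in the next step $i_\phi(\cdot-h)$ is replaced by the previously constructed non-negative solution). The scalar equation $s'=-c\,s$ integrates to
\begin{equation*}
s_\phi(t)=s_\phi(kh)\exp\Bigl(-\int_{kh}^{t} c(\tau)\,d\tau\Bigr),
\end{equation*}
so $0\le s_\phi(t)\le s_\phi(kh)\le 1$. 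Variation of constants applied to the $i$-equation gives
\begin{equation*}
i_\phi(t)=e^{-\gamma(t-kh)}i_\phi(kh)+\int_{kh}^{t} e^{-\gamma(t-\tau)}\,b(\tau)\,s_\phi(\tau)\,i_\phi(\tau-h)\,d\tau\ge 0,
\end{equation*}
by the non-negativity of $s_\phi$, $i_\phi(\cdot-h)$ and $i_\phi(kh)$. Finally, adding the two equations yields $(s_\phi+i_\phi)'(t)=-\gamma\,i_\phi(t)\le 0$, whence $s_\phi(t)+i_\phi(t)\le s_\phi(kh)+i_\phi(kh)\le 1$. This closes the induction and also provides the uniform bound $|x_\phi(t)|_{\max}\le 1$ ruling out blow-up.

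I do not anticipate any real obstacle: the delayed nonlinearity $b(t)\,s(t)\,i(t-h)$ becomes, on each step, a product of the unknown $s$ with a \emph{known} essentially bounded coefficient, so existence and uniqueness reduce to a linear Carathéodory problem. The only mild subtlety is that $b\in L^\infty$ is merely measurable, which prevents a direct appeal to Picard–Lindelöf but is perfectly handled by Carathéodory's theorem; the invariance argument is then a straightforward trio of sign checks on $s$, $i$, and $s+i$.
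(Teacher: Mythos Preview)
Your proof is correct and follows essentially the same invariance argument as the paper: both check that $s\ge 0$ by integrating the first equation, that $i\ge 0$ by a comparison/variation-of-constants argument on successive intervals of length $h$, and that $s+i\le 1$ by summing the two equations. The only real difference is in how existence and uniqueness are obtained: the paper invokes the general local theory for functional differential equations from Hale's monograph and then upgrades to global existence via compactness of $T$, whereas you carry out the method of steps explicitly, observing that on each interval $[kh,(k+1)h]$ the system becomes a \emph{linear} Carath\'eodory ODE in $(s,i)$ with known $L^\infty$ coefficients. Your route is more elementary and self-contained; the paper's is shorter by outsourcing the existence theory. One very minor wording issue: when you say ``the case $k=0$ being the assumption $\phi\in\cT$'' you mean that the inductive hypothesis needed at step $k=0$ (namely $x_\phi\in T$ on $[-h,0]$) is supplied by $\phi\in\cT$, not that the conclusion at $k=0$ is the assumption itself; the subsequent sentences make clear you have this right.
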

\begin{proof}
Given any $\phi\in \cT$ and any $b\in \cU$, the local existence and uniqueness of the  solution follows by~\cite[Chapter 2, Subsection 2.6]{Hale}. Let us denote by $x_{\phi}:[-h,\tau)\to \R^2$, $\tau>0$, such solution and prove that  $x_{\phi}(t)=(s_{\phi}(t),i_{\phi}(t))\in T$ for all $t\in [0,\tau)$. 
First we note that, by considering $i_\phi:[-h,\tau)\to\R$ as a coefficient and integrating the first equation in~\eqref{eq:System}, we have
\[
s_\phi(t)=s_\phi(0)e^{\int_0^t b(r)i_\phi(r-h)\,dr}.
\]
Since $s_\phi(0)\geq 0$ by hypothesis, this implies that   $s_\phi(t)\geq 0$ for all $t\in [0,\tau)$. Using also the non-negativity of the initial condition, we get
\[
i'_\phi(t)=  b(t)s_\phi(t)i_\phi(t-h)-\gamma i_\phi(t)\geq -\gamma i_\phi(t)\;\;\forall\, t\in [0,h)\cap [0,\tau).
\]
By a comparison argument (see for example~\cite[Lemma 1.2]{Teschl12}), the previous inequality implies
\[
i_\phi(t)\geq i_\phi(0)e^{-\gamma t}\geq 0\;\;\forall \,t\in [0,h)\cap [0,\tau).
\]
By iterating the argument on any interval of the form $[kh,(k+1)h]$ for $k\in \N$, we conclude that $i_\phi(t)\geq 0$ for all $t\in [0,\tau)$.
Now, summing the equations in~\eqref{eq:System} we have
\[
(s_\phi(t)+i_\phi(t))'= -\gamma i_\phi(t)\;\;\forall t\in [0,\tau).
\]
Since, by assumption, $i_\phi(0)+s_\phi(0)\leq 1$, this implies that $s_\phi(t)+i_\phi(t)\leq 1$ for all $t\in [0,\tau)$. We have thus proved that if $\phi\in \cT$, then $x_\phi(t)\in T$, for all $t\in [0,\tau)$. Since $T$ is compact, the results in~\cite[Theorem 3.1 and  Subsection 2.6, Chapter 2,]{Hale} imply that the solution exists in $[0,+\infty)$, and the proof is concluded.
\end{proof}
\begin{remark}
We note that, for the  model under consideration, the delayed value of the susceptible component of the state (i.e., $s(t-h)$) does not appear in the equations. 
This implies that the solutions to the delayed Cauchy problem~\eqref{eq:System}-\eqref{icphi} only depend on the value of the $i$-component of the initial condition $\phi\in \cT$ and on $s(0)$. In other words, the evolution of the susceptible population in the interval $[-h,0)$ does not play any role in the future evolution of the epidemic.
\end{remark}

Well-posedness results, like that of Lemma~\ref{lemma:Well-PosednessLemma}, can be found also in~\cite{DiLiddo86} for the constant control case and in ~\cite{YH2007,McC2010} for slightly different delayed SIR models.
We note that the proof substantially shows that \emph{any} triangle of the form $\{x\in \R_+^2\;\vert\;s+i\leq a\}$ for $a\geq 0$ is invariant. It is physically reasonable to consider initial conditions in $\cT$, as $s_\phi(t)$ and $i_\phi(t)$ are the fractions of susceptible (resp. infectious) population  at time $t$. Thus, the preliminary Lemma~\ref{lemma:Well-PosednessLemma} can be seen as a permanence result: if the initial condition is a ``physically feasible'' curve, the solution, forward in time, remains in the region of physical feasibility, no matter the external input $b\in \cU$.

\subsection{Viability for delay systems: general theory and first results.}
We recall here the definitions of viability/forward invariance for delay systems and related results (\cite[Chapter 12]{Aubin2009},~\cite{H1984}). Given any dimension $n\in \N$, consider a set valued map $F:\cC([-h,0];\R^n)\toS \R^n$ (in this subsection, we write $\cC=\cC([-h,0];\R^n)$) and a delayed differential inclusion of the form
\begin{equation}\label{eq:InfiniteDimensionalSetting1}
\begin{cases}
x'(t)\in F(S(t)x),\\
S(0)x=\phi\in \cC.
\end{cases}
\end{equation}

For the SIR model~\eqref{eq:System} we ``a priori" know that solutions exist and are globally defined, for suitable initial conditions and controls, as proved in Lemma~\ref{lemma:Well-PosednessLemma}. For this reason and for the sake of simplicity, 
in this subsection we  make the following hypothesis.

\begin{assumption}\label{assumpt:ExistenceofSolution}
    For any $\phi\in \cC$  there exist at least a solution to~\eqref{eq:InfiniteDimensionalSetting1}, and maximal solutions are defined on~$\R_+$.
\end{assumption}

\begin{defn}
\label{defn:GeneralForwardInvatiantViability}
Given $\cK\subseteq \cC$, we say that
\begin{enumerate}
	\item
$\cK$ is \emph{forward invariant} for~\eqref{eq:InfiniteDimensionalSetting1} if for any $\phi\in \cK$ and \emph{any} solution $x_\phi$ to~\eqref{eq:InfiniteDimensionalSetting1},  starting at $\phi$, we have $S(t)x_\phi\in \cK$ for all $t\in \R_+$;
\item $\cK$ is \emph{viable} for~\eqref{eq:InfiniteDimensionalSetting1} if for any $\phi\in \cK$ there exists a solution $x_\phi$ to~\eqref{eq:InfiniteDimensionalSetting1},  starting at $\phi$, for which it holds that $S(t)x_\phi\in \cK$ for all $t\in \R_+$.
\end{enumerate}
\end{defn} 

The following concept of feasible directions (see~\cite[Definition 12.2.1]{Aubin2009}) is useful in providing a geometric characterization of forward invariance and viability.
 

\begin{defn}
\label{defn:COneFeasibleDirecitions}
Given a subset $\cK\subseteq \cC$ and $\phi \in \cK$, the 
\emph{set of feasible directions} to $\cK$ at $\phi$ is defined by
\begin{eqnarray}
 D_\cK(\phi):=\big\{v\in\R^n\!\!\!\!\!&:\!\!\!\!& \forall\, \varepsilon>0,\  \exists\,\tau\in (0,\varepsilon)\ \text{and}\ \phi_\tau\in \cC(\R; \R^n)\ \text{such that}\nonumber\\
&& S(0)\phi_\tau=\phi,\ S(\tau)\phi_\tau\in \cK,\ \frac{\phi_\tau(\tau)-\phi_\tau(0)}{\tau}\in B(v,\varepsilon)
\big\},\label{eq:ConditionsFeasibleDirection}
\end{eqnarray}
where $B(v,\varepsilon)$ denotes the closed ball of $\R^n$ with radius $\varepsilon$ and centered in $v$.  
\end{defn}

In our setting, a more concise representation of the set of feasible directions can be given in terms of the classical notion of  Bouligand contingent cone, recalled below.
\begin{defn}
\label{DefnBoul}
Consider $K\subset\R^n$ and $x\in K$, the \emph{Bouligand contingent cone to $K$ at $x$} is defined by
\[
T_K(x):=\{v\in \R^n\ \vert\ \exists\, t_k\to 0, t_k>0,\ \exists\, v_k\to v\text{ such that } x+t_kv_k\in K \ \forall\, k\in \N\}.
\]
\end{defn}
It is well-known that, if the set $K$ is convex, the Bouligand contigent cone coincides with the classical tangent cone of convex analysis, i.e., 
\begin{equation}\label{eq:CaracConeConvex}
T_K(x)=\text{cl}\left (\{v\in \R^n\ \vert\ \exists \,\lambda \geq 0 \text{ and } \exists\, y \in K\text{ such that }v=\lambda (y-x)\}\right),
\end{equation}
see~\cite[Theorem 6.9]{RockWets09}. If $x\in \text{int}(K)$, then $T_K(x)=\R^n$. Moreover, if the set $K\subset \R^n$ is defined by
\begin{equation}\label{Kgi}
K=\{x\in \R^n\ \vert\  g_i(x)\leq 0,\, i\in \{1,\dots, M\}\},
\end{equation}
for some $g_1,\dots, g_M\in \cC^1(\R^n;\R)$, then, for any  $x\in\partial K$ such that $g_i(x)=0$ for all $i\in I\subseteq \{1,\dots, M\}$ (and $g_j(x)\neq 0$ for $j\notin I$) and,  under the {\em constraint qualification assumption}
$$
\exists\, v_0\in\R^n\mbox{ such that } \langle \nabla g_i(x),v_0\rangle >0 \ \forall\,i\in I,
$$
we have
\begin{equation}\label{eq:COneGradient}
T_K(x)=\{v\in \R^n\ \vert\ \inp{\nabla g_i(x)}{v}\leq 0\  \forall\, i\in I\},
\end{equation}
see~\cite[Section 4.1.1]{AF2009book}.

In the sequel, the subset $\cK\subset \cC$ will represent the set of initial conditions of the epidemic model~\eqref{eq:System}.  As anticipated in the Introduction, we are going to consider two different kinds of initial conditions, sharing the same set $K\subseteq S\subseteq \R^n$ of traces in $0$. First, we take the subset $\cK_S\subset \cC$ defined by
\begin{equation}\label{eq:SetContinuous}
\cK_S=\{\phi \in \cC\ \vert\ \phi(0)\in K,\,\phi(t)\in S \ \forall \,t\in [-h,0]\}.
\end{equation}
Besides this large set, we consider the following smaller sets of Lipschitz continuous initial conditions, i.e., given a constant $L>0$, we take
\begin{equation}\label{eq:LocLipschitzSet}
\cK_{S,L}=\{\phi\in \cC_{L,\|\cdot\|}\ \vert\ \phi(0)\in K,\,\phi(t)\in S\ \forall\,t\in [-h,0]\},
\end{equation}
where $\cC_{L,\|\cdot\|}$ denotes the set of functions $\phi\in \cC$ that are $L$-Lipschitz in $[-h,0]$ w.r.t.\ the norm $\|\cdot\|$ in $\R^n$, i.e., $\|\phi(t_1)-\phi(t_2)\|\leq L|t_1-t_2|$, for all $t_1,t_2\in [-h,0]$.
 In the following statement we characterize the set of feasible directions of Definition~\ref{defn:COneFeasibleDirecitions} for these different choices of initial conditions.

\begin{lemma}\label{lemma:ConesCharacterization}
Consider a set $K\subset \R^n$ and a convex set $S\supseteq K$. The following characterization of the set of feasible directions to $\cK_S$ in~\eqref{eq:SetContinuous} holds:
\begin{equation}\label{eq:EquivalenceSets1}
D_{\cK_S}(\phi)=T_K(\phi(0))\;\;\,\forall \phi\in \cK_S.
\end{equation}

 Moreover, given $L>0$, we have
\begin{equation}\label{eq:EquivalenceSets2}
D_{\cK_{S,L}}(\phi)=T_K(\phi(0))\cap B_{\|\cdot\|}(0,L)\;\;\,\forall \,\phi\in \cK_{S,L},
\end{equation}
where $B_{\|\cdot\|}(0,L)$ denotes the closed ball of $\R^n$ with the norm $\|\cdot\|$, having radius $L$ and center $0$.
\end{lemma}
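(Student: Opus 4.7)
The plan is to prove each of the two set equalities by double inclusion. The easy (``$\subseteq$'') directions extract sequences witnessing $v \in T_K(\phi(0))$ directly from Definition~\ref{defn:COneFeasibleDirecitions}, while the reverse inclusions require the explicit construction of suitable continuous extensions $\phi_\tau \in \cC(\R; \R^n)$ of $\phi$. Throughout, the convexity of $S$ together with the inclusion $\phi(0) \in K \subseteq S$ does the work of keeping the values of $\phi_\tau$ inside $S$ on the new interval $[0, \tau]$.

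For the ``$\subseteq$'' inclusion in~\eqref{eq:EquivalenceSets1}, given $v \in D_{\cK_S}(\phi)$ I would apply the definition with $\varepsilon = 1/k$ to obtain $\tau_k \in (0, 1/k)$ and $\phi_k \in \cC(\R; \R^n)$ with $\phi_k|_{[-h,0]} = \phi$ and $S(\tau_k)\phi_k \in \cK_S$; in particular $\phi_k(\tau_k) \in K$, so the quotient $v_k := (\phi_k(\tau_k) - \phi(0))/\tau_k$ satisfies $v_k \to v$ and $\phi(0) + \tau_k v_k \in K$, yielding $v \in T_K(\phi(0))$ by Definition~\ref{DefnBoul}. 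For~\eqref{eq:EquivalenceSets2} the same chain applies, and the extra requirement $\phi_k \in \cC_{L,\|\cdot\|}$ on $[\tau_k - h, \tau_k]$ gives $\|\phi_k(\tau_k) - \phi_k(0)\| \leq L \tau_k$, i.e.\ $\|v_k\| \leq L$ and hence $\|v\| \leq L$ in the limit.

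For the ``$\supseteq$'' inclusion in~\eqref{eq:EquivalenceSets1}, given $v \in T_K(\phi(0))$ I would fix sequences $t_k \downarrow 0$ and $v_k \to v$ with $\phi(0) + t_k v_k \in K$. For arbitrary $\varepsilon > 0$, choose $k$ so that $t_k < \varepsilon$ and $\|v_k - v\| < \varepsilon$, set $\tau := t_k$, and define $\phi_\tau$ to agree with $\phi$ on $[-h, 0]$, equal the affine map $t \mapsto \phi(0) + t v_k$ on $[0, \tau]$, and be constantly $\phi(0) + \tau v_k$ beyond. Then $\phi_\tau(\tau) \in K$, and on $[\tau - h, \tau]$ the function lies in $S$: on $[\tau - h, 0]$ because $\phi$ takes values in $S$, and on $[0, \tau]$ because the segment between $\phi(0)$ and $\phi(0) + \tau v_k$ connects two points of $K \subseteq S$ and $S$ is convex. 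Since the difference quotient equals $v_k \in B(v, \varepsilon)$, we conclude $v \in D_{\cK_S}(\phi)$.

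The main obstacle lies in the ``$\supseteq$'' direction of~\eqref{eq:EquivalenceSets2}: the naive affine extension with slope $v_k$ may fail to be $L$-Lipschitz when $\|v\| = L$ and $\|v_k\| > L$ for infinitely many $k$. I would repair this by a time reparametrization: set
\[
\tilde{\tau}_k := t_k \max\bigl(1, \|v_k\|/L\bigr)
\]
and define $\phi_\tau$ on $[0, \tilde{\tau}_k]$ by $\phi_\tau(t) := \phi(0) + (t/\tilde{\tau}_k)\, t_k v_k$, so that the endpoint $\phi(0) + t_k v_k \in K$ is preserved while the slope $(t_k/\tilde{\tau}_k) v_k$ has $\|\cdot\|$-norm equal to $\min(\|v_k\|, L) \leq L$. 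Together with the $L$-Lipschitz regularity of $\phi$ on $[-h, 0]$ (and continuous matching at $0$), this shows $\phi_\tau \in \cC_{L,\|\cdot\|}$ on $[\tilde{\tau}_k - h, \tilde{\tau}_k]$, hence $S(\tilde{\tau}_k)\phi_\tau \in \cK_{S,L}$. As $\tilde{\tau}_k \to 0$ and the scaling factor $t_k/\tilde{\tau}_k = \min(1, L/\|v_k\|) \to 1$ in both cases $\|v\| < L$ and $\|v\| = L$, the difference quotient $(t_k/\tilde{\tau}_k) v_k$ converges to $v$, closing the argument by the arbitrariness of $\varepsilon$.
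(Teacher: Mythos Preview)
Your proof is correct and follows essentially the same strategy as the paper: double inclusion, with the ``$\supseteq$'' direction handled by an affine extension of $\phi$ along the tangent-cone witness $v_k$, and convexity of $S$ ensuring the extension stays in $S$. The only difference is cosmetic: to keep the extension $L$-Lipschitz when $\|v_k\|$ may overshoot $L$, the paper first observes that in the definition of $T_K(x)$ one may always take $\|v_k\|=\|v\|$ (via the substitution $\tilde t_k=\tfrac{\|v_k\|}{\|v\|}t_k$, $\tilde v_k=\tfrac{\|v\|}{\|v_k\|}v_k$), whereas you rescale the time parameter $\tilde\tau_k=t_k\max(1,\|v_k\|/L)$ on the fly --- the two manoeuvres are equivalent.
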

\begin{proof}
First of all, we note that given $x\in K$ we have 
\begin{equation}\label{Eq:TangentConeCHar}
T_K(x)=\{v\in \R^n\ \vert\ \exists\, t_k\to 0,\,t_k>0,\, v_k\to v\text{ such that } \|v_k\|=\|v\|\text{ and } x+t_kv_k\in K\ \forall\, k\in \N\},
\end{equation}
i.e., we can add, without loss of generality, the constraint $\|v_k\|=\|v\|$ to the sequences of vectors in Definition~\ref{DefnBoul}. Indeed, this is clear if $v=0\in T_K(x)$. When, instead, $0\neq v\in T_K(x)$, starting from some sequences $t_k>0$ and $v_k\in \R^n\setminus\{0\}$ such that  $t_k\to 0$, $v_k\to v$ and  $x+t_kv_k\in K$, $\forall k\in \N$, the claim is proved by taking the modified sequences $\wt t_k=\frac{\|v_k\|}{\|v\|}t_k$ and $\wt v_k=\frac{\|v\|}{\|v_k\|}v_k$.
\\
Let us note, moreover, that it is enough to prove~\eqref{eq:EquivalenceSets2}, since the proof of~\eqref{eq:EquivalenceSets1} can be easily obtained from the latter, by sending $L$ to $+\infty$.

Given $\phi\in \cK_{S,L}$, let us start by proving the inclusion $T_K(\phi(0))\cap B_{\|\cdot\|}(0,L)\subseteq D_{\cK_{S,L}}(\phi)$.
For $v\in T_K(\phi(0))\cap B_{\|\cdot\|}(0,L)$, let $t_k$ and $v_k$ be two sequences as in~\eqref{Eq:TangentConeCHar}. Given any $\varepsilon>0$ consider a $k\in \N$ large enough such that $t_k< \varepsilon$ and $\|v_k-v\|< \varepsilon$. Define $\tau=t_k$ and consider $\psi\in \cC([0,\tau];\R^n)$ defined by $\psi(s)=\phi(0)+sv_k$ for any $s\in [0,\tau]$. By convexity of $S$, $\psi(s)\in S$ for all $s\in [0,\tau]$. Moreover, $\psi$ is $L$-Lipschitz, since $\|\psi'(s)\|=\|v_k\|=\|v\|\leq L$, for any $s\in [0,\tau]$. We then define $\phi_\tau=\phi \ast \psi\in \cC([-h,\tau];\R^n)$, where $\ast$ denotes the concatenation in time. To prove that $v\in \cD_{\cK_{L,S}}(\phi)$ we have to verify that $\phi_\tau$ satisfies the conditions~\eqref{eq:ConditionsFeasibleDirection} of Definition~\ref{defn:COneFeasibleDirecitions}, with $\cK=\cK_{S,L}$. We trivially have 
$S(0)\phi_\tau=\phi$ and, moreover, $S(\tau)\phi_\tau(0)=\phi_\tau(\tau)=\phi(0)+t_k v_k\in K$. We also note that $S(\tau)\phi_\tau(s)\in S$ for all $s\in [-h,0]$ and that $S(\tau)\phi_\tau$ is $L$-Lipschitz.
We have thus proved that $S(\tau)\phi_\tau\in \cK_{S,L}$. Also the remaining condition in ~\eqref{eq:ConditionsFeasibleDirection} holds since
\[
\frac{\phi_\tau(\tau)-\phi_\tau(0)}{\tau}=\frac{\phi(0)+t_kv_k-\psi(0)}{t_k}
=v_k\in B_{\|\cdot\|}(v,\varepsilon),
\]
thus concluding the proof of the claimed inclusion.
\\
We now prove the opposite inclusion, $\cD_{\cK_{S,L}}(\phi)\subseteq T_K(\phi(0))\cap B_{\|\cdot\|}(0,L)$.  Given any $v\in \cD_{\cK_{L,S}}(\phi)$ and any $k\in \N$, consider $\varepsilon=\frac{1}{k}$, $\tau\in (0,\varepsilon)$ and $\psi_\tau$ as in Definition~\ref{defn:COneFeasibleDirecitions}. Define $t_k=\tau$ and $v_k=\frac{\psi_\tau(\tau)-\psi_\tau(0)}{\tau}$. By definition of $\cK_{S,L}$, we have
\[
\|v_k\|=\frac{\|\psi_\tau(\tau)-\psi_\tau(0)\|}{\tau}\leq \frac{L\tau}{\tau}= L.
\] 
To conclude, we recall that $t_k\leq \frac{1}{k}\to 0$ and $v_k\to v$ and thus $\|v\|\leq L$. Hence $v\in T_K(\phi(0))\cap B_{\|\cdot\|}(0,L)$, and the proof is concluded.
\end{proof}
\begin{remark}
We note that in Lemma~\ref{lemma:ConesCharacterization} we have  proven, in particular, that the sets $\cD_{\cK_S}(\phi)$ and $\cD_{\cK_S,L}(\phi)$ are independent of the set $S$, which constrains the past values of the initial conditions. Indeed, $\cD_{\cK_S}(\phi)$ and $\cD_{\cK_S,L}(\phi)$ only depends on the ``arrival'' set $K$ and on the final position $\phi(0)$ (and on the Lipschitz constant $L>0$, in the case of $\cD_{\cK_S,L}(\phi)$). This property, which holds for sets of the form~\eqref{eq:SetContinuous} and~\eqref{eq:LocLipschitzSet}, is crucial in the subsequent analysis.
\end{remark}

In the next statement we specialize the main result concerning viability theory, namely Theorem 12.2.2~\cite{Aubin2009},
to the case in which  
the set $\cK$ is of the form~\eqref{eq:SetContinuous} or~\eqref{eq:LocLipschitzSet}.

\begin{theorem}\label{prop:ViabilityAubin}
Consider a set-valued map $F:\cC \toS \R^n$ which is upper semicontinuous with nonempty, convex and compact values and suppose that Assumption~\ref{assumpt:ExistenceofSolution} holds. 
Given compact and convex sets $K\subseteq S\subset \R^n$, and $L>0$, a set $\cK\subset \cC$ of the form~\eqref{eq:SetContinuous} or~\eqref{eq:LocLipschitzSet}  is forward invariant for~\eqref{eq:InfiniteDimensionalSetting1} (resp.\  viable) if and only if 
\[
\begin{aligned}
&\;F(\phi)\subseteq D_\cK(\phi)\,\;\forall \;\phi\in \cK,\\
\text{(resp.) } & F(\phi)\cap D_\cK(\phi)\neq \varnothing\;\,\forall\;\phi\in \cK.
\end{aligned}
\]
\end{theorem}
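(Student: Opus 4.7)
The plan is to deduce this statement directly from the abstract viability theorem for delayed differential inclusions, namely \cite[Thm 12.2.2]{Aubin2009}, once I verify that the two families of admissible initial-condition sets fall under its hypotheses. Since the formulation of the present theorem already uses the feasible-direction set $D_\cK(\phi)$ as in Definition \ref{defn:COneFeasibleDirecitions}, the only work specific to this specialization is to check closedness (and, in the Lipschitz case, compactness) of the sets $\cK_S$ and $\cK_{S,L}$ in $\cC$ endowed with the uniform norm.

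First I would verify closedness. If $\phi_n\to\phi$ uniformly in $\cC$ with $\phi_n\in\cK_S$, then pointwise convergence combined with the closedness of the compact sets $K$ and $S$ yields $\phi(0)\in K$ and $\phi(t)\in S$ for every $t\in[-h,0]$; hence $\phi\in\cK_S$. For $\cK_{S,L}$, the inequality $\|\phi_n(t_1)-\phi_n(t_2)\|\le L|t_1-t_2|$ passes to the uniform limit, so this set is closed as well; since its elements are equi-Lipschitz with values in the compact set $S$, Arzelà--Ascoli actually shows that $\cK_{S,L}$ is compact in $\cC$.

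Under Assumption \ref{assumpt:ExistenceofSolution} and the standing hypotheses on $F$ (upper semicontinuity with nonempty, convex and compact values), the abstract theorem \cite[Thm 12.2.2]{Aubin2009} then applies to any closed $\cK\subset\cC$ and yields
\[
\begin{aligned}
\text{$\cK$ is viable} &\iff F(\phi)\cap D_{\cK}(\phi)\neq\varnothing \quad \forall\,\phi\in\cK,\\
\text{$\cK$ is forward invariant} &\iff F(\phi)\subseteq D_{\cK}(\phi) \quad \forall\,\phi\in\cK.
\end{aligned}
\]
Specializing $\cK$ first to $\cK_S$ and then to $\cK_{S,L}$ delivers the two equivalences of the theorem. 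Note that Lemma \ref{lemma:ConesCharacterization} additionally translates $D_\cK(\phi)$ into a finite-dimensional Bouligand cone of $K$ at $\phi(0)$ (intersected with the $L$-ball in the Lipschitz case), so the abstract conclusion immediately reduces to a pointwise condition on $\R^n$.

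The main obstacle is a careful matching with the precise form of \cite[Thm 12.2.2]{Aubin2009}: while the viability direction requires only closedness of $\cK$ together with the regularity of $F$, the forward-invariance direction typically needs an extra sleekness or local-compactness-type property of $\cK$. For $\cK_{S,L}$ this is free from compactness. For $\cK_S$ one exploits that, by Lemma \ref{lemma:ConesCharacterization}, $D_{\cK_S}(\phi)=T_K(\phi(0))$ and that, by convexity of $K$, this Bouligand cone agrees with the convex tangent cone via \eqref{eq:CaracConeConvex}; this finite-dimensional regularity lifts to the functional setting and guarantees that Aubin's invariance characterization applies without modification, which is the step I would double-check most carefully.
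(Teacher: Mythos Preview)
Your proposal is correct and follows essentially the same route as the paper: reduce to an abstract functional viability theorem after checking that $\cK$ is closed in $\cC$, and use Assumption~\ref{assumpt:ExistenceofSolution} to bypass the sublinear-growth hypothesis. Your closedness and Arzel\`a--Ascoli arguments are exactly what is needed.

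There is one small but meaningful difference in how the two cases are dispatched. You try to push both $\cK_S$ and $\cK_{S,L}$ through \cite[Thm~12.2.2]{Aubin2009}, and you correctly flag that for $\cK_S$ the forward-invariance direction is the delicate point, proposing to resolve it by lifting the convex tangent-cone regularity of $K$ via Lemma~\ref{lemma:ConesCharacterization}. The paper avoids this detour: for $\cK_S$ it invokes \cite[Corollary~1.1]{Haddad81} directly, a result tailored to the ``past in $S$, present in $K$'' structure for which Lemma~\ref{lemma:ConesCharacterization} gives $D_{\cK_S}(\phi)=T_K(\phi(0))$; Haddad's statement already covers both viability and invariance for such sets without requiring any additional sleekness or compactness on the infinite-dimensional side. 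Only for $\cK_{S,L}$ does the paper appeal to \cite[Thm~12.2.2]{Aubin2009}, where closedness (which you verify) suffices. So the concern you single out as ``the step I would double-check most carefully'' is precisely the one the paper sidesteps by citing the more specialized reference.
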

\begin{proof}
For the case $\cK$ of the form~\eqref{eq:SetContinuous}, we apply~\cite[Corollary 1.1]{Haddad81}, recalling by Lemma~\ref{lemma:ConesCharacterization} that $\cD_\cK(\phi)=T_K(\phi(0))$ for all $\phi\in \cK$.

In the case $\cK$ defined as in~\eqref{eq:LocLipschitzSet} we follow the argument of Theorem 12.2.2 in~\cite{Aubin2009},  after having observed that the closedness of $K$ in $\R^n$ implies that $\cK$  is closed with respect to the norm of uniform convergence. We point out that, since the solutions are globally defined (Assumption~\ref{assumpt:ExistenceofSolution}), the sub-linear growth assumption of $F$ required in~\cite{Aubin2009} is not needed here. 
\end{proof}

In spite of the fact that the sets of feasible directions are independent of $S$,  the characterization of invariant (resp.\ viable) sets, in fact, depends on the past, since the conditions of Theorem \ref{prop:ViabilityAubin}
must by verified on the $(S,L)$-depending set $\cK$ of initial conditions.  As a consequence, when the set $S\subseteq \R^n$ is fixed in~\eqref{eq:SetContinuous} and~\eqref{eq:LocLipschitzSet}, a notion of maximality of the set $K$ (of the traces $\phi(0)$ of the initial conditions $\phi\in\cK$)  cannot neglect  the \emph{past} behavior of the functions $\phi$. We thus give now an appropriate definition which distinguishes between initial conditions in $\cK_{S}$ and $\cK_{S,L}$.

\begin{defn}\label{defn:MaximalViableInvariance}
Consider a set $\cH\subseteq \cC$, a set $S\subseteq \R^n$ and an upper semicontinuous set-valued map $F:\cC \toS \R^n$ with nonempty, convex and compact values. A set $K\subseteq S$ is said to be
\begin{itemize}
\item the \emph{maximal forward invariant set with $\cH$-past in $S$ for~\eqref{eq:InfiniteDimensionalSetting1}} if $K$ is the largest set such that
\[
\cK:=\{\phi\in \cH\ \vert\ \phi(0)\in K,\ \phi(s)\in S\ \forall s\in [-h,0]\}
\]
is forward invariant (in the sense of Definition~\ref{defn:GeneralForwardInvatiantViability}).
\item the \emph{maximal viable set with $\cH$-past in $S$ for~\eqref{eq:InfiniteDimensionalSetting1}} if $K$ is the largest set such that
\[
\cK:=\{\phi\in \cH\ \vert\ \phi(0)\in K,\ \phi(s)\in S\ \forall s\in [-h,0]\}
\]
is viable (in the sense of Definition~\ref{defn:GeneralForwardInvatiantViability}).
    \end{itemize}
    When $\cH=\cC$ the $\cH$-prefix is dropped.
\end{defn}

\subsection{Viability for the delayed SIR model}
For the set-valued map $F_U:\cC([-h,0],\R^2) \toS \R^2$ of the delayed SIR model introduced in~\eqref{eq:DefnDiffInclSIR}, Theorem~\ref{prop:ViabilityAubin} specializes in the following statement.
 \begin{corollary}\label{cor:ViabilitySIR}
Given compact and convex sets $K\subseteq S\subset \R^2$ and $L>0$, a set $\cK\subset \cC([-h,0];\R^2)$ of the form~\eqref{eq:SetContinuous} or~\eqref{eq:LocLipschitzSet}  is forward invariant (resp.\ viable) for~\eqref{eq:InfiniteDimensionalSetting} if and only if 
\[
\begin{aligned}
&f(\phi,b) \in D_\cK(\phi) \ \forall \,\phi\in \cK\ \forall\, b\in [\beta_\star,\beta], \;\,\\
\text{(resp.) } &\forall\,\phi\in \cK,\ \exists\, b\in [\beta_\star,\beta]\,: \,f(\phi,b) \in D_\cK(\phi).
\end{aligned}
\]
 \end{corollary}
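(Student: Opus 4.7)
My plan is to obtain the statement as a direct specialization of Theorem~\ref{prop:ViabilityAubin}, applied with $n=2$ and the set-valued map $F=F_U$ defined in~\eqref{eq:DefnDiffInclSIR}. The proof then splits into two tasks: verifying that $F_U$ meets the hypotheses of that theorem, and translating the abstract geometric conditions $F_U(\phi)\subseteq D_\cK(\phi)$ and $F_U(\phi)\cap D_\cK(\phi)\neq\varnothing$ into the parametric form involving $f$ and $b\in[\beta_\star,\beta]$.

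First I would check the structural hypotheses on $F_U$. Nonemptiness, convexity and compactness of values, as well as upper semicontinuity, were already recorded right after~\eqref{eq:DefnDiffInclSIR}. Assumption~\ref{assumpt:ExistenceofSolution} (global existence of maximal solutions) is the only point requiring comment: for initial conditions in $\cT$, global existence is supplied by Lemma~\ref{lemma:Well-PosednessLemma}; in the abstract generality of the corollary, $S\subset\R^2$ is compact and $f$ is continuous, hence bounded on $S\times U$, so local existence follows from the general results in~\cite{Haddad81,Hale} and any maximal solution that stays in the compact set $\cK$ automatically extends to $\R_+$, which is exactly the regime needed for forward invariance and viability.

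The second step is the translation of the two inclusions. Since, by definition, $F_U(\phi)=\{f(\phi,b)\,:\,b\in[\beta_\star,\beta]\}$, the forward invariance condition $F_U(\phi)\subseteq D_\cK(\phi)$ is tautologically equivalent to $f(\phi,b)\in D_\cK(\phi)$ for every $b\in[\beta_\star,\beta]$, while the viability condition $F_U(\phi)\cap D_\cK(\phi)\neq\varnothing$ is equivalent to the existence of at least one $b\in[\beta_\star,\beta]$ with $f(\phi,b)\in D_\cK(\phi)$. Requiring these equivalences uniformly in $\phi\in\cK$ yields precisely the claimed characterizations.

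I do not foresee a genuine obstacle: the only mild subtlety is the compatibility between the generality of compact and convex $K\subseteq S\subset\R^2$ assumed in the corollary and the restricted well-posedness statement of Lemma~\ref{lemma:Well-PosednessLemma}, which is phrased for $\phi\in\cT$. This is harmless, since the applications of the corollary in Section~\ref{Sec:ViabilityAndControl} will be carried out with $S\subseteq T$, and more generally the boundedness of the right-hand side on the compact $\cK$ supplies the global extension of any trajectory remaining in $\cK$.
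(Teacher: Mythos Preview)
Your proposal is correct and follows exactly the approach of the paper: the corollary is stated there as an immediate specialization of Theorem~\ref{prop:ViabilityAubin} to $F=F_U$, with no separate proof given. Your additional care in checking Assumption~\ref{assumpt:ExistenceofSolution} for general compact $S$ (rather than only $S\subseteq T$) is a sensible elaboration that the paper leaves implicit.
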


\section{Qualitative Analysis of Solutions}\label{sec:QualitativeAnalysis}
In this section we collect some properties of (solutions to)  the Cauchy problem~\eqref{eq:InfiniteDimensionalSetting} under arbitrary inputs~$b\in \cU$. 
 It is worth recalling that ~\eqref{eq:InfiniteDimensionalSetting} is nothing else than an equivalent formulation for~\eqref{eq:System}-\eqref{icphi}.

By Lemma \ref{lemma:Well-PosednessLemma}, we have that for every 
initial condition $\phi\in\cT$ a solution to the Cauchy problem for~\eqref{eq:InfiniteDimensionalSetting} exists and its values belong to $T$. 
Our aim is now to study the asymptotic behavior of such solutions.

First, we note that the case in which $\phi=(s_\phi,i_\phi)\in \cT$ is such that $i_\phi(t)=0$ for every $t\in [-h,0]$  is not physically relevant. Nevertheless it is  trivial, since in this case $(s_\phi(t), i_\phi(t))\equiv (s_\phi(0),0)$ for all $t\in \R_+$. 
For this reason, in the rest of this section we assume that 
\begin{equation}\label{eq:InotZero}
\text{$i_\phi\not\equiv 0$ in $[-h,0]$.}
\end{equation}
Let us call $\cT_0$ the set of all $\phi\in \cT$ satisfying~\eqref{eq:InotZero}.

Our first result is inspired by~\cite{DiLiddo86}, in which the case of a constant $b(t)$ is treated; here we adapt the argument to the time-varying case.

\begin{lemma}\label{lemma:COnvergence}
Consider any control input $b\in \cU$, and any $\phi\in \cT_0$. Then, $s_\phi$ is non-increasing in $[0,+\infty)$, and it holds that
\[
\begin{aligned}
\lim_{t\to +\infty} i_\phi(t)=0,\;\;\;
\lim_{t\to +\infty} s_\phi(t)=s_{\phi,\infty}\in [0,\frac{\gamma}{\beta_\star}),
\end{aligned}
\]
where $s_{\phi,\infty}$  depends only on $b$, $\phi(0)$ and $\int_{-h}^0i_\phi(\tau)\;d\tau$. 

Moreover,
\begin{enumerate}
\item if $s_\phi(0)>0$, then
\begin{enumerate} \item $s_{\phi,\infty}>0$, \item $i_\phi(t)>0$ for all $t\geq h$, \item $s_\phi$ is strictly decreasing in $[2h,+\infty)$;
\end{enumerate}
 \item if the input $b$ is eventually essentially constant, i.e., if there exist $T>0$ and $b_\infty\in [\beta_\star, \beta]$ such that $b(t)= b_\infty$ for almost all $t\in [T,+\infty)$, then $s_{\phi,\infty}< \frac{\gamma}{\,b_\infty}$.
\end{enumerate}
\end{lemma}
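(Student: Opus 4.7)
The plan is to extract the four parts of the statement from two workhorse identities: the exponential representation of $s_\phi$ and the summed equation $(s_\phi+i_\phi)'(t)=-\gamma i_\phi(t)$. First I would note that, treating $t\mapsto b(t)i_\phi(t-h)$ as a non-negative measurable coefficient in the first equation of~\eqref{eq:System}, one obtains
\begin{equation*}
s_\phi(t)=s_\phi(0)\exp\!\left(-\int_0^t b(r)\,i_\phi(r-h)\,dr\right),
\end{equation*}
so $s_\phi$ is non-increasing on $[0,+\infty)$ and converges to some $s_{\phi,\infty}\in[0,1]$ (Lemma~\ref{lemma:Well-PosednessLemma} guarantees non-negativity and boundedness). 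Integrating the summed equation yields $s_\phi(t)+i_\phi(t)+\gamma\int_0^t i_\phi=s_\phi(0)+i_\phi(0)$, hence $\int_0^{+\infty}i_\phi<+\infty$. From the second equation and the bounds $s_\phi,i_\phi,i_\phi(\cdot-h)\le 1$, $b\le\beta$, we get $|i_\phi'|\le\beta+\gamma$ a.e., so $i_\phi$ is uniformly continuous; a non-negative integrable uniformly continuous function must vanish at infinity, giving $i_\phi(t)\to 0$. Passing to the limit in the summed identity gives $s_{\phi,\infty}=s_\phi(0)+i_\phi(0)-\gamma\int_0^{+\infty}i_\phi$, and after the change of variable $\sigma=r-h$ in the exponent above, this together with the exponential representation characterizes $s_{\phi,\infty}$ in terms of $b$, $\phi(0)$ and $\int_{-h}^0 i_\phi$.

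The key step is the strict upper bound $s_{\phi,\infty}<\gamma/\beta_\star$. I would argue by contradiction using the Lyapunov-like functional
\begin{equation*}
V(t):=i_\phi(t)+\gamma\int_{t-h}^t i_\phi(\tau)\,d\tau,
\end{equation*}
whose a.e.\ derivative reduces, after cancellation, to $V'(t)=\bigl(b(t)s_\phi(t)-\gamma\bigr)i_\phi(t-h)$. If $s_{\phi,\infty}\geq\gamma/\beta_\star$, monotonicity of $s_\phi$ gives $s_\phi(t)\ge\gamma/\beta_\star$ for every $t\ge 0$, and since $b(t)\ge\beta_\star$ we get $V'(t)\ge 0$ a.e., so $V$ is non-decreasing on $[0,+\infty)$. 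Because $i_\phi(t)\to 0$ and the sliding integral $\int_{t-h}^t i_\phi$ also tends to $0$, we have $V(t)\to 0$, forcing $V(0)\le 0$. However $\phi\in\cT_0$ means $i_\phi\not\equiv 0$ on $[-h,0]$, hence $V(0)=i_\phi(0)+\gamma\int_{-h}^0 i_\phi>0$, a contradiction.

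For part (1), assuming $s_\phi(0)>0$: (a) the integral in the exponent is bounded by $\beta\bigl(\int_{-h}^0 i_\phi+\int_0^{+\infty}i_\phi\bigr)<+\infty$, so $s_{\phi,\infty}=s_\phi(0)\cdot e^{-\text{finite}}>0$. (b) On $[0,h]$ the second equation reads $(e^{\gamma t}i_\phi(t))'=e^{\gamma t}b(t)s_\phi(t)i_\phi(t-h)$, where $i_\phi(t-h)$ is the prescribed initial datum, non-negative and not identically zero; integrating over $[0,h]$ and using $s_\phi(t)\ge s_{\phi,\infty}>0$ gives $i_\phi(h)>i_\phi(0)e^{-\gamma h}\ge 0$ strictly. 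The same inequality $i_\phi'\ge-\gamma i_\phi$ then propagates positivity to all $t\ge h$. Finally (c) is immediate from $s_\phi'(t)=-b(t)s_\phi(t)i_\phi(t-h)$: for $t\ge 2h$ one has $t-h\ge h$, so all three factors are strictly positive, hence $s_\phi$ is strictly decreasing.

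Part (2) reuses the same Lyapunov functional, but replaces $\gamma/\beta_\star$ with $\gamma/b_\infty$ after the threshold time $T$: if $b\equiv b_\infty$ a.e.\ on $[T,+\infty)$ and $s_{\phi,\infty}\geq\gamma/b_\infty$, then $b(t)s_\phi(t)\ge\gamma$ on $[T,+\infty)$, so $V$ is non-decreasing there; combined with $V(t)\to 0$ this forces $V(T)\le 0$, but part~(1)(b) gives $i_\phi(T)>0$ (up to enlarging $T$ past $h$), whence $V(T)>0$, a contradiction. The $s_\phi(0)=0$ subcase in both (1) and (2) is trivial since then $s_\phi\equiv 0$. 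The main obstacle is identifying the functional $V$ and verifying that its derivative simplifies so cleanly; once that is in place, the Barbalat-type argument showing $V(t)\to 0$ drives every sharp conclusion of the lemma.
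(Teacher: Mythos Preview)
Your proof is correct, but the key step---the strict bound $s_{\phi,\infty}<\gamma/\beta_\star$ and its refinement in part~(2)---is handled by a genuinely different device than the paper's. The paper integrates the first equation, splits off the factor $\beta_\star\int_0^\infty i_\phi$ (which it rewrites via the summed equation), and arrives at the transcendental relation
\[
s_{\phi,\infty}\,e^{-\frac{\beta_\star}{\gamma}s_{\phi,\infty}}\;=\;M(\phi,b)\,s_\phi(0)\,e^{-\frac{\beta_\star}{\gamma}s_\phi(0)},\qquad M(\phi,b)<1,
\]
and then analyzes the unimodal function $g(z)=ze^{-\beta_\star z/\gamma}$ on the two sides of its maximum at $z=\gamma/\beta_\star$. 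Your Lyapunov functional $V(t)=i_\phi(t)+\gamma\int_{t-h}^t i_\phi$ bypasses this algebra entirely: the cancellation in $V'$ is clean, the monotonicity under the contradiction hypothesis is immediate, and the limit $V(t)\to 0$ forces $V(0)\le 0$ against $V(0)>0$. This is shorter and arguably more transparent; on the other hand, the paper's transcendental identity gives an explicit implicit equation for $s_{\phi,\infty}$, which is informative in its own right and mirrors the classical final-size relation for the undelayed SIR model. Your route to $i_\phi(t)\to 0$ via Barbalat (Lipschitz bound on $i_\phi'$, integrability from the summed equation) is also different from the paper's, which instead observes that $s_\phi+i_\phi$ is non-increasing so that $i_\phi$ has a limit, necessarily zero by integrability; both are standard and equally short. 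Your treatment of (1)(a)--(c) is essentially the same as the paper's, with a slightly more direct argument for $i_\phi(h)>0$.
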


\begin{proof}
For every $\phi\in \cT_0$, $s_\phi(\cdot):\R_+\to \R_+$ is non-increasing since, by the invariance of $\cT$ stated in Lemma~\ref{lemma:Well-PosednessLemma}, we have  $s'_\phi(t)\leq 0$ for all $t\in \R_+$. Moreover,  $s_\phi(\cdot)$ is bounded from below by $0$ and thus the limit
\[
\lim_{t\to +\infty} s_\phi(t)=:s_{\phi,\infty}
\]
exists and belongs to $[0,1]$. Define $N_\phi(t):=s_\phi(t)+i_\phi(t)$. We have that 
\begin{equation}\label{Npgi}
N'_\phi(t)=-\gamma i_\phi(t),\ t\in\R_+,
\end{equation}
and thus also $N_\phi(t)$ is a non-increasing function and admits a non-negative limit, by Lemma~\ref{lemma:Well-PosednessLemma}. Since $i_\phi(t)=N_\phi(t)-s_\phi(t)$, also $i_\phi$ admits a limit $i_{\phi,\infty}$, which is non-negative  again by  Lemma~\ref{lemma:Well-PosednessLemma}.

On the other hand, by integrating \eqref{Npgi} on a time interval $[T,+\infty)$ with $T\ge0$, and using that $i_{\phi,\infty}\ge0$,  we get
\begin{equation}\label{eq:SumInfinityle}
\int_{T}^\infty i_\phi(\tau)\;d\,\tau=\frac{N_\phi(T)-s_{\phi,\infty}-i_{\phi,\infty}}{\gamma}\le \frac{N_\phi(T)-s_{\phi,\infty}}{\gamma}.
\end{equation}
From the last inequality we immediately get that 
$i_{\phi,\infty}=\lim_{t\to\infty}i_\phi(t)=0$, and the first equality in \eqref{eq:SumInfinityle} becomes
\begin{equation}\label{eq:SumInfinity}
\int_{T}^\infty i_\phi(\tau)\;d\,\tau=\frac{N_\phi(T)-s_{\phi,\infty}}{\gamma}\ \mbox{ for 
 every }T\ge0.
\end{equation}

To conclude the proof of the first part of the statement, it remains to show that $s_{\phi, \infty}<\frac{\gamma}{\beta_\star}$.

We use here the notation $b(t)=\beta_\star+v(t)$ with $v\in L^\infty(\R_+;[0,\beta-\beta_\star])$. 
By integrating the first equation in~\eqref{eq:System} for $t\geq h$, and performing a simple change of variable, we obtain
\[
\begin{aligned}
s_{\phi}(t)&=s_\phi(0)\,\text{exp}\left ( -\int_0^t b(\tau) i_\phi(\tau-h)\;d\tau\right )\\&=s_\phi(0)\,\text{exp}\left (-\int_{-h}^0 b(\tau+h)i_\phi(\tau)\;d\tau-\beta_\star\int_{0}^{t-h} i_\phi(\tau)\,d\tau-\int_{0}^{t-h} v(\tau+h) i_\phi(\tau)\;d\tau\right ).
\end{aligned}
\]
 As $t\to +\infty$, we have
\begin{equation}\label{eq:Limitsolution}
s_{\phi,\infty}=s_\phi(0)\,\text{exp}\left (-\int_{-h}^0 b(\tau+h)i_\phi(\tau)\;d\tau-\beta_\star\int_{0}^{\infty} i_\phi(\tau)\,d\tau-\int_{0}^{\infty} v(\tau+h) i_\phi(\tau)\;d\tau \right ).
\end{equation}
By using the equation \eqref{eq:SumInfinity} with $T=0$ to substitute the integral $\int_{0}^{\infty} i_\phi(\tau)\,d\tau$
in~\eqref{eq:Limitsolution}, we obtain
\begin{equation}\label{eq:EquationEquilibrium}
s_{\phi,\infty}=s_\phi(0)\,\text{exp}\left (-\int_{-h}^0 b(\tau+h)i_\phi(\tau)\;d\tau-\int_{0}^{\infty} v(\tau+h) i_\phi(\tau)\;d\tau -\frac{\beta_\star}{\gamma}(N_\phi(0)-s_{\phi,\infty})\right ).
\end{equation}

Equation~\eqref{eq:EquationEquilibrium}  can be written in the equivalent form
\begin{equation}\label{eq:ExplicitDefEq}
s_{\phi,\infty} e^{-\frac{\beta_\star}{\gamma}s_{\phi,\infty}}=M(\phi,b)s_\phi(0)e^{-\frac{\beta_\star}{\gamma}s_\phi(0)}
\end{equation}
with 
\[M(\phi,b):=\text{exp}\left ( -\int_{-h}^0b(\tau+h) i_\phi(\tau)\;d\tau-\int_{0}^{\infty} v(\tau+h) i_\phi(\tau)\;d\tau-\frac{\beta_\star}{\gamma}(i_\phi(0))\right ).
\] 
Since $i_\phi(t)$ is not identically zero in $[-h,0]$ then $M(\phi,b)<1$. By this inequality and \eqref{eq:ExplicitDefEq}, then we get
\begin{equation}\label{eq:ExplicitDefEq<}
s_{\phi,\infty} e^{-\frac{\beta_\star}{\gamma}s_{\phi,\infty}}<s_\phi(0)e^{-\frac{\beta_\star}{\gamma}s_\phi(0)}
\end{equation}
 Let us now introduce the function $g(z):= ze^{-\frac{\beta_\star}{\gamma}z}$, which is increasing in $[0,\frac{\gamma}{\beta_\star}]$ and decreasing in $[\frac{\gamma}{\beta_\star}, +\infty) $. If $s_\phi(0)<\frac{\gamma}{\beta_\star}$ then the claim ($s_{\phi,\infty}<\frac{\gamma}{\beta_\star}$) follows by the monotonicity of $s_\phi$. Let us consider then the case $s_\phi(0)\geq \frac{\gamma}{\beta_\star}$ and assume by contradiction that $s_{\phi,\infty}\geq \frac{\gamma}{\beta_\star}$.
  Since $s_{\phi,\infty}\leq s_\phi(0)$, by the monotonicity of $g$ in the interval $[\frac{\gamma}{\beta_\star},+\infty)$ we have $g(s_{\phi,\infty})\geq g(s_\phi(0))$, so contradicting~\eqref{eq:ExplicitDefEq<}. This completes the proof of the first part of the statement. 
  
Let us now start proving the ``moreover'' part {\sl (1)} of the statement. 

If $s_\phi(0)>0$, from~\eqref{eq:ExplicitDefEq}, we have $s_{\phi,\infty}>0$, that is \emph{(1)(a)}.  

To prove \emph{(1)(b)},
 we first show that there exists a $\tau\in [0,h]$ such that $i_\phi(\tau)>0$. Since $i_\phi$ is continuous and not identically zero in $[-h,0]$, there exist $\delta\in (0,h)$ and $\varepsilon>0$ such that $i_\phi(s)>0$ for all $s\in (-h+\delta-\varepsilon,-h+\delta)$. Since $s_\phi(0)>0$, we have already proven that $s_\phi(t)\geq s_{\phi_\infty}>0$ for all $t\in \R_+$. Suppose by contradiction that $i_\phi(s)= 0$, for all $s\in [0,\delta]$. We have
\[
i_\phi(\delta)=\int_0^\delta b(s)s_\phi(s)i_\phi(s-h)\,ds \geq \beta_\star s_{\phi,\infty} \int_{0}^\delta i_\phi(s-h)\,ds\geq \beta_\star s_{\phi,\infty} \int_{-h+\delta-\varepsilon}^{-h+\delta} i_\phi(s)\,ds>0,
\]
leading to a contradiction.
We can thus fix $\tau\in [0,h]$, such that $i_\phi(\tau)>0$. Since $i_\phi'(t)\geq -\gamma i_\phi(t)$, by the comparison principle (see~\cite[Lemma 1.1]{Teschl12}) it holds that 
\begin{equation}\label{eq:Estimation}
i_\phi(t)\geq i_\phi(\tau)e^{-\gamma(t-\tau)},
\end{equation}
thus proving that $i_\phi(t)>0$ for all $t\geq h$, that is~\emph{(1)(b)}.

Assertion~\emph{1(c)} is a direct consequence of~\emph{(1)(a)} and~\emph{(1)(b)}, because $s_\phi(t)=-b(t)s_\phi(t)i_\phi(t-h)<0$ if $t\geq 2h$.

 Let us now prove~\emph{(2)}.  Suppose that there exists $T>h$ and $b_\infty\in [\beta_\star, \beta]$ such that $b(t)=b_\infty$, for almost all $t\in [T,+\infty)$. By integrating the first equation of~\eqref{eq:System}
 on the interval $[T,t]$ for a $t\geq T+h$, we obtain
\[
\begin{aligned}
s_{\phi}(t)&=s_\phi(T)\,\text{exp}\left ( -\int_T^t b(\tau) i_\phi(\tau-h)\;d\tau\right )=s_\phi(T)\,\text{exp}\left (-b_\infty\int_{T-h}^Ti_\phi(\tau)\;d\tau-b_\infty\int_{T}^{t-h} i_\phi(\tau)\,d\tau\right).
\end{aligned}
\]
By taking the limit as $t\to +\infty$, we have
\begin{equation}\label{eq:Limitsolution1}
s_{\phi,\infty}=s_\phi(T)\,\text{exp}\left (-b_\infty\int_{T-h}^T i_\phi(\tau)\;d\tau-b_\infty\int_{T}^{\infty} i_\phi(\tau)\,d\tau\right ).
\end{equation}
By using the equation \eqref{eq:SumInfinity} to substitute the integral $\int_{T}^{\infty} i_\phi(\tau)\,d\tau$
in~\eqref{eq:Limitsolution1},  and manipulating, we see  that $s_{\phi,\infty}$  satisfies the equation
\begin{equation}\label{eq:Limitsolution11}
s_{\phi,\infty} e^{-\frac{b_\infty}{\gamma}s_{\phi,\infty}}=M_T(\phi,b_\infty)s_\phi(T)e^{-\frac{b_\infty}{\gamma}s_\phi(T)}
\end{equation}
with 
\[M_T(\phi,b_\infty):=\text{exp}\left ( -b_\infty\int_{T-h}^T i_\phi(\tau)\;d\tau-\frac{b_\infty}{\gamma}(i_\phi(T))\right )<1,
\]
since by \emph{(1)(b)} if $i_\phi(t)$ is not identically zero in $[-h,0]$, then $i_\phi(T)>0$.
 Recalling that $s_{\phi,\infty}\leq s_\phi(T)$, and arguing as in the proof of the first part of the statement we see that~\eqref{eq:Limitsolution11} implies $s_{\phi,\infty}< \frac{\gamma}{b_\infty}$.
\end{proof}

In the delay-free case, i.e., when $h=0$ the value $s=\frac{\gamma}{\beta}$ is the \emph{herd immunity threshold}, see for example~\cite{AvrFre22,FG2023} and references therein. In this case,  if $s_\phi(0)\leq \frac{\gamma}{\beta}$ and $i_\phi(0)>0$,~\eqref{eq:System} immediately implies that $i_\phi:\R_+\to \R$ is strictly decreasing.
In the subsequent statement we show that $\frac{\gamma}{\beta}$ is an important threshold also in the delay case (when $h>0$): if at the current instant the susceptible population $s$ is smaller than this value, the number of infected people $i$ will not exceed again the maximum value attained in the last $h$ time units.

\begin{lemma}\label{lemma:DecreasingProperty}
Consider any control input $b\in\cU$. Given any $\phi\in \cT_0$ and any $\tau\in \R_+$, define $\wwi\,(\phi,\tau):=\displaystyle\max_{t\in [\tau-h,\tau]}i_\phi(t)$. Then, for any $\phi\in \cT_0$ such that $s_\phi(0)\leq \frac{\gamma}{\beta}$,  we have
\begin{equation}\label{eq:DecreasingWeak}
i_\phi(t+\tau)\leq\wwi\,(\phi,\tau),\;\;\forall\, \tau\in \R_+\ \forall \,t\in [0,h].
\end{equation}
\end{lemma}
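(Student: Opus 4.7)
The plan is to reduce the statement to a Gr\"onwall-type comparison argument applied to the auxiliary function $j(t) := i_\phi(t+\tau) - \wwi\,(\phi,\tau)$, exploiting the monotonicity of $s_\phi$ together with the bound $s_\phi(0)\leq \gamma/\beta$.

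\textbf{Step 1: reduce to a time-shifted ODE.} Fix $\tau \in \R_+$ and set $M := \wwi\,(\phi,\tau) = \max_{r \in [\tau-h,\tau]}i_\phi(r)$. By the monotonicity of $s_\phi$ established in Lemma~\ref{lemma:COnvergence}, we have $s_\phi(r) \leq s_\phi(0) \leq \gamma/\beta$ for every $r\geq 0$. Combined with $b(r)\leq \beta$, this gives the pointwise bound $b(r)s_\phi(r)\leq \gamma$ for a.e.\ $r\geq 0$.

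\textbf{Step 2: build the key differential inequality on $j$.} For $t \in [0,h]$, we have $t+\tau-h \in [\tau-h,\tau]$, so $i_\phi(t+\tau-h)\leq M$ by definition of $M$. Using the second equation of~\eqref{eq:System} (in the a.e.\ distributional sense granted by Lemma~\ref{lemma:Well-PosednessLemma}), we compute
\begin{equation*}
j'(t) \;=\; b(t+\tau)\,s_\phi(t+\tau)\,i_\phi(t+\tau-h) - \gamma\, i_\phi(t+\tau) \;\leq\; \gamma M - \gamma i_\phi(t+\tau) \;=\; -\gamma\, j(t)
\end{equation*}
for a.e.\ $t\in[0,h]$. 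Moreover $j(0) = i_\phi(\tau) - M \leq 0$ trivially, since $i_\phi(\tau)$ is one of the values in the max defining $M$.

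\textbf{Step 3: conclude via comparison.} Applying a standard comparison argument for absolutely continuous functions satisfying $j' + \gamma j \leq 0$ a.e.\ (for instance~\cite[Lemma 1.2]{Teschl12}, as already used in the proof of Lemma~\ref{lemma:Well-PosednessLemma}), we obtain $j(t)\leq j(0)e^{-\gamma t}\leq 0$ for every $t\in[0,h]$, which is exactly~\eqref{eq:DecreasingWeak}.

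There is no real obstacle: the whole proof hinges on the observation that the coefficient $b(t+\tau)s_\phi(t+\tau)$ in front of the delayed term $i_\phi(t+\tau-h)$ is dominated by $\gamma$ as soon as $s$ has crossed the herd-immunity threshold $\gamma/\beta$, so the delayed feedback gets majorized by the natural recovery rate. The only subtlety worth flagging is that the interval $[t+\tau-h, t+\tau]$ with $t \in [0,h]$ slides forward by exactly one delay unit from $[\tau-h,\tau]$, which is precisely what makes the bound $i_\phi(t+\tau-h)\leq M$ available throughout $[0,h]$ without iterating.
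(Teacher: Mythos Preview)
Your proof is correct and rests on exactly the same two ingredients as the paper's argument: the bound $b(r)s_\phi(r)\leq\gamma$ for $r\geq 0$ (from $s_\phi$ non-increasing and $s_\phi(0)\leq\gamma/\beta$) together with $i_\phi(t+\tau-h)\leq M$ for $t\in[0,h]$. The only difference is packaging: the paper argues by contradiction, picking a first violation time $\bar\tau$ and the last time $\tau_1<\bar\tau$ at which $i_\phi$ equals $M$, then integrates the equation over $[\tau_1,\bar\tau]$; you instead apply Gr\"onwall directly to $j(t)=i_\phi(t+\tau)-M$, which is arguably cleaner since it avoids introducing $\tau_1$ and the auxiliary observation that $i_\phi\geq M$ on $[\tau_1,\bar\tau]$.
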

\begin{proof}
Consider any $\tau\in \R_+$, and suppose by contradiction that there exists a $\bar \tau \in [\tau,\tau+h]$ such that $i_\phi(\bar \tau)>\wwi\,(\phi,\tau)$.  Consider 
\[
\tau_1=\max\{t\in [\tau,\bar \tau]\ :\ i_\phi(t)=\wwi\,(\phi,\tau)\},
\]
which exists by continuity and because $i_\phi(\tau)\leq \wwi\,(\phi,\tau)$ and $i_\phi(\bar \tau)>\wwi\,(\phi,\tau)$. Moreover,  $\tau_1<\bar \tau$. Then, by definition of $\tau_1$, it holds that $i_\phi(t)\geq \wwi\,(\phi,\tau)$ for all $t\in [\tau_1,\bar \tau]$. We have
\[
i_\phi(\bar \tau)=i_\phi(\tau_1)+\int_{\tau_1}^{\bar \tau} b(t)s_\phi(t)i_\phi(t-h)-\gamma i_\phi(t)\,dt \leq \wwi(\phi,\tau)+\gamma \int_{\tau_1}^{\bar \tau}(i_\phi(t-h)-\wwi\,(\phi,\tau))\,dt\leq \wwi\,(\phi,\tau),
\] 
where, in the last inequality, we used that $i_\phi(t-h)\leq \wwi\,(\phi,\tau)$ for all $t\in [\tau_1,\bar \tau]$ because $t-h\in [\tau-h,\tau]$, and that $s_\phi(t)\leq s_\phi(0)\leq \frac{\gamma}{\beta}$, for all $t\geq 0$. This leads to a contradiction which concludes the proof. 
\end{proof}

In Lemma~\ref{lemma:DecreasingProperty}, rephrasing, we have proved that, for any $\phi\in \cT_0$ such that $s_\phi(0)\leq\frac{\gamma}{\beta}$, the function $t\mapsto \|S(t)i_\phi\|_\infty=\max_{s\in [0,h]}i_\phi(t-s)$ is non-increasing.
This should be compared with the delay-free case (see~\cite{AvrFre22,FG2023}), in which, if the initial condition $(s_\phi(0),i_\phi(0))$ is such that $s_\phi(0)\leq \frac{\gamma}{\beta}$ and $i_\phi(0)>0$, we have that the function $t\mapsto i_{\phi}(t)$ is strictly decreasing. In this regard, Lemma~\ref{lemma:DecreasingProperty} provides a ``weak'' decreasing property of the infectious component $i_\phi$ of the state in the delay case. We want to stress the fact that, when $h>0$, \eqref{eq:DecreasingWeak} does \emph{not} imply that $i_\phi$ be monotonic (which, in fact, might be not).

\section{Viability analysis and control}\label{Sec:ViabilityAndControl}
 Given a parameter $0<i_M\leq 1$ (representing the maximal acceptable proportion of infected), we are going to analyze the cases in which the trajectories  
of~\eqref{eq:System}/\eqref{eq:InfiniteDimensionalSetting} 
 stay (can be forced to be in) the feasible set 
\begin{equation}\label{eq:FeasibleSet}
C:=\{x\in T\;\vert\;i\leq i_M\}.
\end{equation} 
\subsection{Arbitrary feasible initial conditions}\label{subsec:Arbitrary}

In this subsection we consider the case in which the controller has a limited knowledge on the past evolution of the epidemics: we suppose that it is only known that in the past $h$ time units, the value of $i_\phi$ did not exceed the safety threshold $i_M$. More formally, we will assume the following. 

\begin{assumption}\label{assum:Minimal}
  The initial condition $\phi\in \cC$ 
satisfies $\phi(t)\in C$ for all $t\in [-h,0]$. 
\end{assumption}

Now, we are going  to introduce some additional state-space curves which characterize the (maximal) forward invariant and viable sets with past in $C$ of~\eqref{eq:System} (see  Definition~\ref{defn:MaximalViableInvariance}). In particular, we characterize the boundaries of these regions by providing the ``\emph{worst case behavior}''  for the two cases $b\equiv \beta$ and $b\equiv \beta_\star$, respectively. We will thus consider (backward) solutions of~\eqref{eq:System} with an artificial value of $i(t-h)$, fixed equal to $i_M$, as formalized in the sequel.
 The backward solution for $b=\beta$ with initial point  $s(0)=\gamma/\beta$, $i(0)=i_M$, will be proven to define the boundary $\Gamma_\beta$ of the forward invariant set, see Figure  \ref{Figure:FirstPlot}. The boundary $\Gamma_{\beta_\star}$ of the viable set, which is also drawn in the figure, will be analogously obtained by taking the backward solution for $b=\beta_\star$ starting from  $s(0)=\gamma/\beta_\star$ and  $i(0)=i_M$.

\subsubsection{Construction of the curve $\Gamma_\beta$.} 

Let us define  $\Psi_\beta=(\ws_\beta,\wi_\beta):\R_+\to \R^2$ as the solution to the linear system
\begin{equation}\label{Eq:ForwardInvSyst}
\begin{cases}
s'(t)=\beta i_Ms(t),\\
i'(t)=-\beta i_Ms(t)+\gamma i(t),
\end{cases}
\end{equation}
with initial condition $s(0)=\frac{\gamma}{\beta}$ and $i(0)=i_M$. Since the system is linear, the unique solution can be explicitly computed to be
\begin{equation}\label{esbb}
\ws_\beta(t)=\frac{\gamma}{\beta}e^{\beta i_M t},\quad 
\wi_\beta(t)=i_Me^{\gamma t}
\begin{cases}
1-t,&\mbox{ if }\beta i_M=\gamma,\\
\frac{\beta i_M-\gamma\rm{e}^{(\beta i_M-\gamma)t}}{\beta i_M-\gamma},&\mbox{ if }{\beta i_M\neq\gamma},
\end{cases}
\quad t\in\R_+.
\end{equation}
Let us denote by $T_\beta>0$ the unique solution of the equation $\wi_\beta(t)=0$
that is the unique time at which $\Psi_\beta$ crosses the $s$-axis.
It turns out to be
\begin{equation}\label{Tbeta}
T_\beta=\begin{cases}
1,&\mbox{ if }\beta i_M=\gamma,\\
\frac{1}{\beta i_M-\gamma}
\ln\frac{\beta i_M}{\gamma},
&\mbox{ if }{\beta i_M\neq\gamma}.
\end{cases}
\end{equation}
Let us denote 
\begin{equation}\label{sbtb}
\widehat s_\beta:=\ws_\beta(T_\beta)=\frac{\gamma}{\beta}
\begin{cases}
\rm{e}^{\beta i_M},&\mbox{ if }\beta i_M=\gamma,\\
\Big(\frac{\beta i_M}{\gamma}\Big)^\frac{\beta i_M}{\beta i_M-\gamma},
&\mbox{ if }{\beta i_M\neq\gamma}.
\end{cases}
\end{equation}
Since $\ws_\beta$ is strictly increasing, we can define $\Gamma_{\beta}:[\frac{\gamma}{\beta},\widehat s_{\beta}]\to \R_+$ as the function representing the curve $\Psi_{\beta}$ in $[0,T_{\beta}]$ as a graph  $i(s)= \Gamma_\beta(s)$. 
 By dividing the equations in~\eqref{Eq:ForwardInvSyst}, we note that $\Gamma_\beta$ is the solution of the Cauchy problem
\begin{equation}\label{eq:DefintionGamma}
\begin{cases}
i'(s)=-1+\frac{\gamma i(s)}{\beta i_M s},\\
i(\frac{\gamma}{\beta})=i_M.
\end{cases}
\end{equation}  By  integrating, we have
\[
\Gamma_\beta(s)=\begin{cases}cs^{\omega}+\frac{s}{\omega-1},\;\;\;&\text{if }\omega\neq 1,\\
c_1s-s\log(s),\;\;\;&\text{if }\omega=1,
\end{cases}
\]
with  $\omega=\frac{\gamma}{\beta i_M}$, $c=\big(\frac{\gamma}{\beta}\big)^{-\omega}(i_M-\frac{\gamma}{\beta(\omega-1)})$ and $c_1=1+\log(\frac{\gamma}{\beta})$.
By basic calculus arguments one can
show that $\Gamma_\beta$ is strictly decreasing and concave in $[\frac{\gamma}{\beta},\widehat s_\beta]$, and thus the  hypograph $\{(s,i)\in \R^2\,\vert\;s\in [\frac{\gamma}{\beta},\widehat s_\beta],\; i\leq \Gamma_\beta(s)\}$ is convex, see Figure~\ref{Figure:FirstPlot}.

\subsubsection{Construction of the curve $\Gamma_{\beta_\star}$.}

Similarly, we denote by $\Psi_{\beta_\star}=(\ws_{\beta_\star},\wi_{\beta_\star}):\R_+\to \R^2$ the unique solution to the system (given by replacing $\beta$ with $\beta_\star$ in \eqref{Eq:ForwardInvSyst}) 
\[
\begin{cases}
s'(t)=\beta_\star i_Ms(t),\\
i'(t)=-\beta_\star i_Ms(t)+\gamma i(t),
\end{cases}
\]
with initial condition $s(0)=\frac{\gamma}{\beta_\star}$ and
$i(0)=i_M$. The solution, the interception time $T_{\beta_\star}$ with the $s$-axis, and 
$\widehat s_{\beta_\star}:=\ws_{\beta_\star}(T_{\beta_\star})$,
can be explicitly written by replacing $\beta$ with $\beta_\star$ in \eqref{esbb}, \eqref{Tbeta} and \eqref{sbtb}, respectively.


As before, in the interval $[0,T_{\beta_\star}]$ we represent the curve $\Psi_{\beta_\star}$  by a graph $i(s)=\Gamma_{\beta_\star}(s)$ with $\Gamma_{\beta_\star}:[\frac{\gamma}{\beta_\star},\widehat s_{\beta_\star}]\to\R_+$ defined by
\begin{equation}\label{eq:CurvaGammaStar}
\Gamma_{\beta_\star}(s)=\begin{cases}c_\star s^{\omega_\star}+\frac{s}{\omega_\star-1},\;\;\;&\text{if }\omega_\star\neq 1,\\
c_{\star,1}s-s\log(s),\;\;\;&\text{if }\omega_\star=1,
\end{cases}
\end{equation}
with $\omega_\star=\frac{\gamma}{i_M\beta_\star}$, $c_\star=(\frac{\gamma}{\beta_\star})^{-\omega_\star}(i_M-\frac{\gamma}{\beta_\star(\omega_\star-1)})$ and $c_{\star,1}=1+\log(\frac{\gamma}{\beta_\star})$.
Also $\Gamma_{\beta_\star}$ is strictly decreasing and concave in its domain. In Figure~\ref{Figure:FirstPlot} we have depicted the curve $\Gamma_{\beta_\star}$ for arbitrarily chosen values of $\gamma$, $\beta_\star$ and~$i_M$.

The next theorem will characterize the maximal forward invariant and viable sets with past in $C=\{x\in T\;\vert\;i\leq i_M\}$, 
in terms of the following subsets of $C$:
\begin{equation}\label{RAB}
\begin{array}{ll}
R:= ([0, \frac{\gamma}{\beta}]\times [0,i_M])\cap T, & A:=R\, \cup\,\big \{(s,i)\in T\ \vert\ s\in [\frac{\gamma}{\beta},\widehat s_\beta],\, i\leq \Gamma_\beta(s)\big\},\\
R_\star:= ([0, \frac{\gamma}{\beta_\star}]\times [0,i_M])\cap T, & 
B:=R_\star \cup\big \{(s,i)\in T\ \vert\ s\in [\frac{\gamma}{\beta_\star},\widehat s_{\beta_\star}],\, i\leq \Gamma_{\beta_\star}(s)\big\}.
\end{array}
\end{equation}

\begin{theorem}\label{thm:ViabilityDelay}

\begin{enumerate}[leftmargin=*]
\item The set $\cA\subset \cC$ defined by
\[
\cA:=\left\{\phi\in \cC\ \vert\ \phi(0)\in A,\ \phi(t)\in C\ \forall \,t\in [-h,0]\right\}
\]
is forward invariant, and $A$ 
is the maximal forward invariant set with past in $C$, in the sense of Definition~\ref{defn:MaximalViableInvariance}.
\item  The set $\cB\subset \cC$ defined by
\[
\cB:=\left\{\phi\in \cC\ \vert\ \phi(0)\in B,\  \phi(t)\in C\ \forall\, t\in [-h,0]\right\}
\]
is viable, and $B$ is the maximal viable set with past in $C$, in the sense of Definition~\ref{defn:MaximalViableInvariance}.
\end{enumerate}
\end{theorem}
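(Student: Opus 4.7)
The plan is to invoke Corollary~\ref{cor:ViabilitySIR} so as to reduce forward invariance of $\cA$ and viability of $\cB$ to pointwise tangent cone conditions at the trace $\phi(0)$. By Lemma~\ref{lemma:ConesCharacterization} the feasible directions $D_\cA(\phi)$ and $D_\cB(\phi)$ depend only on $\phi(0)$; moreover $A$ and $B$ are closed and convex, their upper boundary being the concatenation of a horizontal segment $\{i=i_M\}$ with the concave arc $\Gamma_\beta$ (resp.\ $\Gamma_{\beta_\star}$), joined smoothly since $\Gamma'_\beta(\gamma/\beta)=\Gamma'_{\beta_\star}(\gamma/\beta_\star)=0$ by~\eqref{eq:DefintionGamma}. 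Hence the tangent cone admits the gradient description~\eqref{eq:COneGradient}.

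For the forward invariance of $\cA$, I would split $\partial A$ into the axes $\{s=0\}$, $\{i=0\}$, the segment $\{s+i=1\}\cap A$, the horizontal piece $\{i=i_M,\ s\in[0,\gamma/\beta]\}$, and the arc $\Gamma_\beta$. On the first four pieces the inclusion $f(\phi,b)\in T_A(\phi(0))$ follows directly from the sign structure of $f$ together with the bound $i_\phi(-h)\leq i_M$ (which holds because $\phi(t)\in C$ on $[-h,0]$); for instance on the horizontal piece, $i'=b\,s\,i_\phi(-h)-\gamma i_M\leq\beta\cdot(\gamma/\beta)\cdot i_M-\gamma i_M=0$. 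The decisive piece is the arc $\Gamma_\beta$: with $g(s,i)=i-\Gamma_\beta(s)$, reading off $1+\Gamma_\beta'(s)=\gamma\Gamma_\beta(s)/(\beta i_M s)$ from~\eqref{eq:DefintionGamma} and substituting, the tangent cone condition collapses to
\[
\gamma\,i_\phi(0)\Bigl(\tfrac{b\,i_\phi(-h)}{\beta\,i_M}-1\Bigr)\leq 0,
\]
which holds for every $b\in[\beta_\star,\beta]$. The viability of $\cB$ is proved by the same calculation with $\beta$ replaced by $\beta_\star$ in the ODE defining $\Gamma_{\beta_\star}$, now exhibiting $b\equiv\beta_\star$ as a feasible selection realizing the analogous inequality (with equality in the worst case $i_\phi(-h)=i_M$).

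For the maximality statements I would argue by contradiction. Given $x_0=(s_0,i_0)\in C\setminus A$, necessarily $s_0>\gamma/\beta$ with either $i_0>\Gamma_\beta(s_0)$ or $s_0>\widehat s_\beta$. For small $\delta>0$ construct $\phi_\delta\in\cC$ with $s_{\phi_\delta}\equiv s_0$, $i_{\phi_\delta}(t)=i_M$ on $[-h,-\delta]$, and affine interpolation from $i_M$ to $i_0$ on $[-\delta,0]$, so that $\phi_\delta(t)\in C$ on $[-h,0]$. With constant control $b\equiv\beta$, on $[0,h-\delta]$ the delayed term $i_{\phi_\delta}(t-h)=i_M$ is frozen and the solution obeys the linear system~\eqref{Eq:ForwardInvSyst} (in reversed time); its $(s,i)$-trace is the integral curve of~\eqref{eq:DefintionGamma} through $(s_0,i_0)$, which lies strictly above $\Gamma_\beta$. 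An explicit analysis of this one-parameter family shows that any such super-critical curve attains an $i$-maximum $i^*=s^*/\omega>i_M$ at some $s^*>\gamma/\beta$, whence $i(t)$ must cross the level $i_M$ in finite time, contradicting the hypothesised forward invariance of any $\cK\supsetneq\cA$ with past in $C$. The argument for maximality of $B$ is identical after replacing $\beta$ by $\beta_\star$ in the construction.

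The main obstacle I anticipate lies precisely in this last step, when the ideal crossing time $t_M$ of the integral curve exceeds $h$: the reduction to~\eqref{Eq:ForwardInvSyst} is accurate only on $[0,h-\delta]$, and for $t>h$ the true forcing $i_{\phi_\delta}(t-h)$ equals the computed solution. I would control this by exploiting that $i$ is non-decreasing along the ideal trajectory up to $i^*$, so $i_{\phi_\delta}(t-h)\geq i_0$ on a window long enough to propagate the exit (Lemma~\ref{lemma:DecreasingProperty} is the right tool here). A further subtlety appears in the maximality of $B$, where the exit must be established for \emph{every} admissible $b\in\cU$; the dependence $b\mapsto i(t)$ is not monotone (larger $b$ accelerates growth but also depletes $s$), so a uniform Gronwall-type estimate, rather than a pointwise comparison to the extremal choice $b\equiv\beta_\star$, will likely be required.
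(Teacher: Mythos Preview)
Your tangent-cone verification for forward invariance of $\cA$ and viability of $\cB$ is correct and matches the paper's approach essentially line for line; the computation on the arc $\Gamma_\beta$ reducing to $\gamma\,i_\phi(0)\bigl(\tfrac{b\,i_\phi(-h)}{\beta\,i_M}-1\bigr)\le 0$ is exactly what the paper obtains (in normal-vector language).

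The maximality argument, however, has a genuine gap that you correctly anticipate but do not resolve. Your proposed fix via Lemma~\ref{lemma:DecreasingProperty} is inapplicable: that lemma requires $s_\phi(0)\le\gamma/\beta$, precisely the opposite of the regime $s_0>\gamma/\beta$ in which $C\setminus A$ lives. And following a single trajectory past time $h-\delta$ forces you to control the true feedback $i_\phi(t-h)$, which is no longer equal to $i_M$; your suggestion that $i_\phi(t-h)\ge i_0$ might hold on a window, but $i_0$ is not $i_M$, so the comparison with the auxiliary linear system~\eqref{Eq:ForwardInvSyst} breaks down.

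The paper circumvents this entirely with an \emph{iteration/reset trick} that you are missing. After running the solution with $b\equiv\beta$ on $[0,h-\varepsilon]$ (where it agrees with the time-reversal~\eqref{eq:FOrwardSystemAuxiliary} of~\eqref{Eq:ForwardInvSyst}), the endpoint $x_{\phi_0}(h-\varepsilon)$ lies in $\widetilde A$ by the assumed forward invariance of $\widetilde\cA$, and still strictly above $\Gamma_\beta$ by ODE uniqueness. One then builds a \emph{new} initial condition $\phi_1\in\widetilde\cA$ with $\phi_1(0)=x_{\phi_0}(h-\varepsilon)$ and, again, $i_{\phi_1}\equiv i_M$ on $[-h,-\varepsilon]$; this is legitimate because membership in $\widetilde\cA$ constrains the past only to lie in $C$, not to be a piece of an actual trajectory. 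The solution from $\phi_1$ again follows~\eqref{eq:FOrwardSystemAuxiliary} on $[0,h-\varepsilon]$. Iterating and concatenating, one manufactures a global solution $\bar\phi$ of the non-delayed linear system~\eqref{eq:FOrwardSystemAuxiliary} starting at $\phi_0(0)$; this is solved explicitly, reaches $s=\gamma/\beta$ in finite time, and there has $i>\Gamma_\beta(\gamma/\beta)=i_M$, contradicting $\bar\phi(t)\in\widetilde A\subset C$. The same reset mechanism (combined with a phase-plane comparison showing that any $b\ge\beta_\star$ keeps the forced trajectory above $\Gamma_{\beta_\star}$) handles the maximality of $B$; the control-dependence issue you raise dissolves because at each reset one is free to reimpose the worst-case past $i\equiv i_M$, regardless of which control realised viability on the previous segment.
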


\begin{figure}[!t]
\centering
  \includegraphics[scale=0.85]{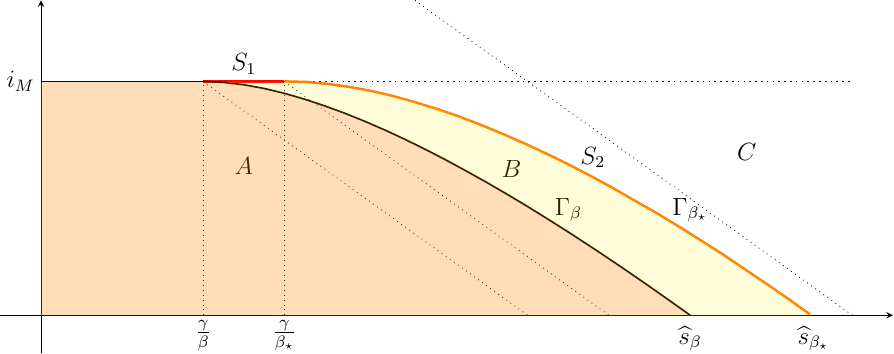}
\caption{Plot of the regions in Theorem~\ref{thm:ViabilityDelay} for the values $i_M=0.4$, $\gamma/\beta=0.2$, $\gamma/\beta_\star=0.3$. The orange region represents the maximal forward invariant set $A$ with past in $C$, while the whole colored corresponds to the set $B$, that is the maximal viable set with past in $C$. The red line $S_1$ and the orange curve $S_2$ represent the sets in which the control policy in~Theorem~\ref{thm:GreedyControlPolicy} is possibly discontinuous.} \label{Figure:FirstPlot}
\end{figure}

\begin{proof}
Let us prove the forward invariance of $\cA$ by applying  Corollary~\ref{cor:ViabilitySIR} with $\cK=\cA$.
There, the invariance condition is written in terms of the set  $D_{\cA}$ of feasible directions which, in turn, is characterized in Lemma~\ref{lemma:ConesCharacterization} (with $\cK_S=\cA$, $S=C$ and $K=A$) as
$$
D_{\cA}(\phi)=T_A(\phi(0))\;\;\,\forall \phi\in \cA.
$$
Summarizing, we have to prove that 
\begin{equation} \label{icA}  
f(\phi,b) \in T_A(\phi(0)) \quad \forall \,\phi\in \cA\  \forall\, b\in [\beta_\star,\beta],
\end{equation}
where $f(\phi,b)$ is given  in~\eqref{eq:DefnDiffInclSIR}.

It is enough to consider the case $\phi(0)\in \partial A$, because if $\phi(0)\in \text{Int}(A)$ then $T_A(\phi(0))=\R^n$ and condition~\eqref{icA}  is straightforwardly satisfied.

The set $A$ has a piecewise $C^1$ boundary and can be written in the form \eqref{Kgi}. Thus, we can use the characterization of $T_A(\phi(0))$ given by \eqref{eq:COneGradient}. 
In checking \eqref{icA}, by continuity of $f$, we can restrict ourselves to consider only the $\phi(0)$ that belong 
to the (relative) interior of each piece of $\partial{A}$ and prove  that  the scalar product of $f(\phi,b)$ with a (outer) normal vector $v\ne0$ to $A$ in $\phi(0)$ turns out to be non-positive.

Let us then distinguish the following cases corresponding to the relative interior of different pieces of $\partial A$.  \begin{enumerate}[leftmargin=*]
\item $i_\phi(0)=0$.   Since 
$f(\phi,b)=(-bs_\phi(0)i_\phi(-h),\;bs_\phi(0)i_\phi(-h))$ 
and $v=(0,-1)$ is a normal vector to the halfplane $\{i\geq0\}$,
then we have
$
\inp{v}{f}=-bs_\phi(0)i_\phi(-h) \leq 0,\ \forall b\in [\beta_\star,\beta].
$
\item $s_\phi(0)=0$. We have that $f(\phi,b)=\{(0,-\gamma i_\phi(0))\}$. Since 
$v=(-1,0)$ is a normal vector to $A$ at $(s_\phi(0),i_\phi(0))$
then we have  
$
\inp{v}{f}=0$.
\item $s_\phi(0)<\frac{\gamma}{\beta}$ and $i_\phi(0)=i_M$. Since $v=(0,1)$, we have
\[
\inp{v}{f}=bs_\phi(0)i_\phi(-h)-\gamma i_\phi(0)=bs_\phi(0)i_\phi(-h)-\gamma i_M< (\frac{b}{\beta}-1)\gamma i_M\leq 0
\]
for every $b\in [\beta_\star,\beta]$.
\item $s_\phi(0)\in (\frac{\gamma}{\beta},\widehat s_\beta)$ and $i_\phi(0)=\Gamma_\beta(s_\phi(0))$. Let us denote, for simplicity, $\phi(0)=(s_0,i_0)$ and observe that any normal vector $v=(v_1,v_2)\neq 0$ to $A$ at $(s_0,i_0)$ satisfies $v_1\geq0$ and $v_2\geq0$. This vector $v$ is, by definition, perpendicular to the tangent vector $(1,\Gamma'_\beta(s_0))$. Since, by~\eqref{eq:DefintionGamma}, we have 
$\Gamma'_\beta(s_0)=-1+\frac{\gamma i_0}{\beta  s_0 i_M}$, then
$$
0=\inp{(v_1,v_2)}{(1,\Gamma'_\beta(s_0))}=v_1+\Gamma'_\beta(s_0)v_2=v_1-v_2+v_2\frac{\gamma i_0}{\beta s_0i_M}
$$
if and only if
$$
0=-(v_2-v_1)\beta s_0 i_M+v_2\gamma i_0,
$$
\noindent from which we also deduce that $v_2\geq v_1$.
Then, we have
\[
\inp{v}{f(\phi,b)}=(v_2-v_1) b s_0i_\phi(-h)-v_2\gamma i_0\leq  (v_2-v_1) \beta s_0i_M-v_2\gamma i_0=0.
\]
\end{enumerate}
The proof of forward invariance of $\cA$ is thus completed.  
We now prove the maximality. Consider any set $\widetilde A\neq A$ such that $A\subset \widetilde A\subseteq C$, and suppose by contradiction that  $ \widetilde\cA:=\left\{\phi\in \cC\;\vert\;\phi(0)\in \widetilde A,\; \phi(s)\in C\;\forall s\in [-h,0]\right\}$ is forward invariant. 
We can now consider a particular $\phi_0\in \widetilde \cA$ satisfying $\phi_0(0)\in \widetilde A\setminus A$, i.e.,
\begin{equation}\label{eq:InequalityInitialCondition}
s_{\phi_0}(0)>\frac{\gamma}{\beta},\quad i_{\phi_0}(0)>\Gamma_\beta(s_{\phi_0}(0)),
{}\end{equation} 
and $i_{\phi_0}(t)=i_M$ for all $t\in [-h,-\varepsilon]$ for a  given  $\varepsilon\in (0,h)$.  Since $\widetilde \cA$ is supposed to be forward invariant, the solution $(s_{\phi_0}(t),i_{\phi_0}(t))$ of~\eqref{eq:System} corresponding to the control $b\equiv\beta$  belongs to $\widetilde A$ for every $t\in \R_+$. Moreover, in the interval $[0,h-\varepsilon]$, it coincides with the solution of the linear system (where $i_M$ replaces $i(t-h)$)
\begin{equation}\label{eq:FOrwardSystemAuxiliary}
\begin{cases}
s'(t)=-\beta i_Ms(t),\\
i'(t)=+\beta i_Ms(t)-\gamma i(t),\\
\end{cases}
\end{equation}
with initial condition $(s(0),i(0))=\phi_0(0)$. We note that the system~\eqref{eq:FOrwardSystemAuxiliary} is the time-inversion of system~\eqref{Eq:ForwardInvSyst}, i.e., it is defined by the same vector field with opposite sign. This implies that $i(s)=\Gamma_\beta(s)$ represents the graph of a  solution of~\eqref{Eq:ForwardInvSyst}, as well as~\eqref{eq:FOrwardSystemAuxiliary}. By uniqueness of solutions of~\eqref{eq:FOrwardSystemAuxiliary}, inequality~\eqref{eq:InequalityInitialCondition} implies  $i_{\phi_0}(t)>\Gamma_\beta(s_{\phi_0}(t))$ for all $t\in [0,h-\varepsilon]$. Thus $(s_{\phi_0}(t),i_{\phi_0}(t))\notin A$ for all $t\in [0,h-\varepsilon]$, while $(s_{\phi_0}(t),i_{\phi_0}(t))\in \widetilde A$,  by forward invariance of $\wt \cA$. We can now iterate the argument, considering a  $\phi_1\in \widetilde \cA$ such that $\phi_1(0)=(s_{\phi_0}(h-\varepsilon),i_{\phi_0}(h-\varepsilon))$, and $i_{\phi_1}(t)=i_M$ for $t\in [-h,-\varepsilon]$. The solution $(s_{\phi_1}(t),i_{\phi_1}(t))$ to the system~\eqref{eq:System} corresponding to the control $b\equiv \beta$ again coincides in $[0,h-\varepsilon]$, with the solution of~\eqref{eq:FOrwardSystemAuxiliary} with initial condition  $(s(0),i(0))=\phi_1(0)$. Proceeding similarly to define $\phi_k\in \widetilde \cA$,  we have that the solutions $(s_{\phi_k}(t),i_{\phi_k}(t))$ have the property that $i_{\phi_k}(t)>\Gamma_\beta( s_{\phi_k}(t))$ for all $t\in [0,h-\varepsilon]$ and all $k\in \N$. We now join all curves $\phi_k$ by considering $\bar \phi:\R_+\to \R^2$ defined by 
\[
\bar \phi(t)=(s_{\bar \phi}(t),i_{\bar \phi}(t))=\left(s_{\phi_k}(t-k(h-\varepsilon)),i_{\phi_k}(t-k(h-\varepsilon))\right)\;\;\;\text{if } t\in [k(h-\varepsilon),\,(k+1)(h-\varepsilon)],\;\;k\in \N.
\] 
By construction we have that $\bar \phi(t)\in \wt A\setminus A$ for all $t\in \R_+$, and $\bar \phi$ satisfies the Cauchy problem defined by the linear system~\eqref{eq:FOrwardSystemAuxiliary} with initial condition $\bar \phi(0)=\phi_0(0)$. 
The Cauchy problem for~\eqref{eq:FOrwardSystemAuxiliary} with initial condition  $(s(0),i(0))=\phi_0(0)$
can be explicitly solved. In particular, we have
$s_{\bar \phi}(t)=s(0)e^{-\beta i_Mt}$.
Since 
$s(0)>\frac{\gamma}{\beta}$ (see~\eqref{eq:InequalityInitialCondition}),  
in $T=\frac{1}{\beta i_M}\log(\frac{s_0\beta}{\gamma})>0$ we have 
$s_{\bar \phi}(T)=\frac{\gamma}{\beta}$.
By the uniqueness of solutions of~\eqref{eq:FOrwardSystemAuxiliary}, we have therefore
\[
i_{\bar\phi}(T)>\Gamma_\beta(s_{\bar \phi}(T))=\Gamma_\beta(\frac{\gamma}{\beta})=i_M.
\] Thus, $\bar \phi(T)\notin C \supseteq \wt A$,  leading to a contradiction.

The proof of part~{\it(2)} of the statement proceeds in a similar way, that is, we prove the viability of $\cB$ by applying  Corollary~\ref{cor:ViabilitySIR} with $\cK=\cB$ (see \eqref{eq:SetContinuous})
and Lemma~\ref{lemma:ConesCharacterization} with $\cK_S=\cB$, $S=C$ and $K=B$.
In the current case, we have to prove that 
\begin{equation} \label{vcB} 
\forall\,\phi\in \cB,\ \exists\, b\in [\beta_\star,\beta]\,: \,f(\phi,b) \in T_B(\phi(0)),
\end{equation}
where $f(\phi,b)$ is given  in~\eqref{eq:DefnDiffInclSIR},
and it is enough to consider the case $\phi(0)\in \partial B$ (since otherwise $T_B(\phi(0))=\R^n$).

As before, since the set $B$ has a piecewise $C^1$ boundary  we can restrict ourselves to consider only the $\phi(0)$ that belong 
to the (relative) interior of each piece of $\partial{B}$ and prove  that  the scalar product of $f(\phi,b)$ with a normal vector $v\ne0$ to $B$ in $\phi(0)$ turns out to be non-positive.


We  explicitly develop the non-trivial cases only. 
In the case $0<s_\phi(0)<\frac{\gamma}{\beta_\star}$ and $i_\phi(0)=i_M$, by computing the scalar product of 
$f(\phi,b)$ with the normal vector $v=(0,1)$, we obtain 
\[
\inp{v}{f}=bs_\phi(0)i_\phi(-h)-\gamma i_\phi(0)=bs_\phi(0)i_\phi(-h)-\gamma i_M\leq (b\frac{\gamma}{\beta_\star}-\gamma) i_M,
\]
which is non-positive if $b\leq \beta_\star$, and thus $f(\phi,\beta_\star)\in T_B(\phi(0))$. Now suppose that $\phi(0)=(s_0,i_0)$ is such that $s_0\in (\frac{\gamma}{\beta_\star}, \widehat s_{\beta_\star})$ and $\Gamma_{\beta_\star}(s_0)=i_0$. Again, a normal vector to $B$ at $(s_0,i_0)$ is of the form $w=(w_1,w_2)\neq 0$, with $w_1,w_2\geq 0$, and, by definition of $\Gamma_{\beta_\star}$ it satisfies
\begin{equation}\label{eq:DefTangentB}
(w_2-w_1)\beta_\star s_0i_M-w_2\gamma i_0=0,
\end{equation}
which also implies $w_2\geq w_1$.
We have
\begin{equation}\label{eq:DefTangentBBB}
\inp{w}{f(\phi,\beta_\star)}= (w_2-w_1)\beta_\star s_0i_\phi(-h)-w_2\gamma i_0\leq (w_2-w_1)\beta_\star s_0i_M-w_2\gamma i_0=0,
\end{equation}
and \eqref{vcB} is satisfied.
This proves that $\cB$ is viable. The maximality follows by an argument analogous to the one used in proving~(1).
\end{proof}
Besides the state equation~\eqref{eq:System} and the state constraint~\eqref{ICU},  we  consider now a \emph{cost functional} of the form 
\begin{equation}\label{eq:CostFunctionsal}
J(u):=\int_0^\infty G(u(t))\;dt,
\end{equation}
where $u(t):=\beta-b(t)$ and $G:\R_+\to \R_+$ is a convex and strictly increasing function, satisfying $G(0)=0$.

\begin{problem}[Optimal Control Problem $\cP_\phi$.]\label{prob:OptimalControlPb}
Given the initial data $\phi\in \cC$, minimize over all admissible controls $u\in\cU$ 
\begin{itemize}
\item the cost functional \eqref{eq:CostFunctionsal},
\item under the ICU constraint \eqref{ICU}  on the trajectory of ~\eqref{eq:System}.
\end{itemize}
In the sequel we refer to this formulation as problem $\cP_\phi$.
\end{problem}

We first prove that a solution to the optimal control problem $\cP_\phi$ exists, for initial conditions in $\cC$. 

\begin{theorem}\label{thm:ExistenceOptimalControl}
 For any initial condition $\phi\in \cC$, the optimal control problem $\cP_\phi$ admits a solution.
\end{theorem}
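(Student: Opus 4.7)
The plan is the direct method of the calculus of variations in $L^\infty$. Let $\cU_\phi \subseteq \cU$ denote the (presumably nonempty) set of controls $u$ such that the trajectory of \eqref{eq:System} driven by $b=\beta-u$ satisfies the ICU constraint \eqref{ICU}. Pick a minimizing sequence $(u_n)\subset\cU_\phi$ with $J(u_n)\downarrow m:=\inf_{\cU_\phi}J$. Since each $u_n$ takes values in $[0,\beta-\beta_\star]$, the Banach--Alaoglu theorem yields a subsequence (not relabeled) and a $u^*\in L^\infty(0,+\infty;[0,\beta-\beta_\star])$ with $u_n\rightharpoonup^* u^*$; set $b_n:=\beta-u_n$ and $b^*:=\beta-u^*$, so that $b_n\rightharpoonup^* b^*$ in $L^\infty$.

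The central step is a continuous-dependence result: $x_{\phi,b_n}\to x_{\phi,b^*}$ uniformly on every compact subset of $[-h,+\infty)$. I would prove this by induction on the intervals $[kh,(k+1)h]$, $k\in\N$. On $[0,h]$, the delayed term $i_\phi(\cdot-h)$ is the prescribed datum, so the first equation in \eqref{eq:System} integrates to
\[
s_{\phi,b_n}(t)=s_\phi(0)\exp\!\Big(-\int_0^t b_n(\tau)\,i_\phi(\tau-h)\,d\tau\Big).
\]
Testing the weak-$*$ convergence of $b_n$ against the $L^1$-function $i_\phi(\cdot-h)\chi_{[0,t]}$ yields pointwise convergence of the exponents, and uniform Lipschitz bounds on $t\mapsto\int_0^t b_n(\tau)i_\phi(\tau-h)\,d\tau$ upgrade this to uniform convergence on $[0,h]$; hence $s_{\phi,b_n}\to s_{\phi,b^*}$ uniformly. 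The variation-of-constants formula for $i$ then delivers uniform convergence of $i_{\phi,b_n}$ on $[0,h]$, as the product $b_n\,s_{\phi,b_n}\,i_\phi(\cdot-h)$ converges weakly-$*$ against $L^1$ test functions (one factor converges uniformly, hence strongly in $L^1$, while the other converges weakly-$*$). Iterating on $[h,2h],[2h,3h],\ldots$ propagates uniform convergence to every compact subinterval of $\R_+$. Since $i_{\phi,b_n}(t)\le i_M$ for all $t\ge 0$ and $n\in\N$, passing to the pointwise limit gives $i_{\phi,b^*}(t)\le i_M$ for all $t\ge 0$, so that $u^*\in\cU_\phi$.

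For optimality, note that on bounded sets of $L^\infty$, weak-$*$ convergence implies weak convergence in $L^2(0,T)$ for each finite $T$ (test functions in $L^2(0,T)$ extend by zero to elements of $L^1(0,+\infty)$). Since $G$ is convex, non-negative and continuous on $[0,\beta-\beta_\star]$, the functional $u\mapsto\int_0^T G(u(t))\,dt$ is convex and strongly continuous, hence weakly lower semicontinuous, on the bounded set of admissible controls in $L^2(0,T)$. Therefore
\[
\int_0^T G(u^*(t))\,dt \le \liminf_n\int_0^T G(u_n(t))\,dt \le \liminf_n J(u_n) = m,
\]
and monotone convergence (as $G\circ u^*\ge 0$) as $T\to+\infty$ gives $J(u^*)\le m$. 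Hence $u^*$ is optimal.

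The principal obstacle is the delayed continuous-dependence step: the delay breaks the usual one-shot Gr\"onwall argument and forces the step-by-step propagation on intervals of length $h$, where at each step weak-$*$ convergence of the control has to be combined with uniform (strong) convergence of the state produced in the previous step. A minor caveat implicit in the statement is the nonemptiness of $\cU_\phi$, which is automatic, for instance, whenever $\phi$ belongs to the set $\cB$ of Theorem~\ref{thm:ViabilityDelay}(2); if $\cU_\phi=\varnothing$ the statement is vacuous.
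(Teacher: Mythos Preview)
Your proof is correct and follows essentially the same direct-method route as the paper: weak-$*$ compactness of the controls, passage to the limit in the state equation to obtain feasibility of the limit control, and convexity of $G$ for lower semicontinuity of the cost. The only cosmetic difference is that the paper works with the triple $(u,s,i)\in L^\infty\times W^{1,\infty}(I;\R^2)$ and invokes Rellich's theorem to get locally uniform convergence of the states, whereas you obtain the same state convergence by an explicit method-of-steps argument on successive intervals of length $h$; both mechanisms amount to combining the uniform bound on $(s_n,i_n)'$ with weak-$*$ convergence of $b_n$ against strongly $L^1$-converging factors.
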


\begin{proof} 
Let us denote for simplicity $I=(0,+\infty)$.
To prove the existence of an optimal solution we  observe that it is equivalent to prove the existence of a minimizer
of the functional $J^\infty:L^\infty(I;[0,\beta-\beta_\star])\times W^{1,\infty}(I;\R^2)\to[0,+\infty]$ defined by
\begin{equation}\label{minF}
J^\infty(u,s,i):=J(u)+\chi_{\Lambda}(u,s,i)+\chi_{i\le i_M}(i),
\end{equation}
where $\Lambda$ is the set of admissible pairs, that is all control-state vectors $(u,s,i)$ that satisfy the initial value problem for the state equation \eqref{eq:System} with initial condition $\phi\in \cC$, while $\chi_\Lambda $ denotes the indicator function of $\Lambda$ that takes the value $0$ on $\Lambda$ and $+\infty$ otherwise; similarly, the function $\chi_{i\le i_M}(i)$ is $0$ if $i(t)\le i_M$ for every $t\in I$, and $+\infty$ otherwise. 

On the domain of $J^\infty$ we consider the topology given by the product of the weak* topologies of the spaces
 $L^\infty$ and $W^{1,\infty}$, and aim to prove sequential lower semicontinuity and coercivity 
of the functional $J^\infty$ with respect to this topology. By the 
Direct Method of the Calculus of Variations {\color{black}(see, for instance, Buttazzo \cite[Sec.\ 1.2]{Buttazzo89})}, these properties imply the existence of a solution to the minimum problem. They are direct consequences of the  
fact that the space of controls is weakly* compact, that the assumptions on the integrand imply that the cost functional $J$ is weakly* lower semicontinuous (see, for instance, \cite[Theorem 5.1]{FG2023})
 and the fact that the sets $\Lambda$ {\color{black} and $\{i\le i_M\}$ are} closed with respect to the weak* convergence. 
{\color{black} The claimed closedness of such sets follows by the application of Rellich compactness theorem, which ensures that weakly* converging sequences in $W^{1,\infty}(I)$ are, up to subsequences, uniformly converging on every bounded subinterval of $I$ (see for instance \cite[Theorem 8.8 and Remark 10]{Brezis2011}).} To prove it in details, let us consider a sequence $(u_n,s_n,i_n)\in\Lambda$  weakly* converging in $L^\infty(I)\times W^{1,\infty}(I, \R^2)$ to $(u,s,i)$ and satisfying $i_n\le i_M$ for every $n\in\N$. Then, we easily get that 
$i_n(\cdot-h)=i_\phi 1_{[0,h]}+i_n1_{(h,+\infty)}$ weakly* converges to $i(\cdot-h)=i_\phi 1_{[0,h]}+i1_{(h,+\infty)}$ in $L^\infty(I)$.
Then we can pass to the limit in the state equations and, by uniqueness of the limit, we obtain that $(u,s,i)\in\Lambda$. Moreover, $i\le i_M$ by the local uniform convergence.   
\end{proof}

\begin{remark}
In proving the existence of an optimal control (Theorem \ref{thm:ExistenceOptimalControl}), we have chosen to work with the single-valued  function  formulation of the control problem (Problem \ref{prob:OptimalControlPb}). It is worth noting that the same problem can be also stated in terms of the functional differential inclusion \eqref{eq:InfiniteDimensionalSetting}. In the latter  case,  the existence follows by the compactness of the set of trajectories,  which can be proven by using, for instance, Theorem 3 (Chapter 4, Section 7) of  Aubin and Cellina \cite{AC_Diff_Inc}) (we acknowledge an anonymous referee for this remark). 
Nevertheless, to check the assumption of the mentioned Theorem 3, the semicontinuity of the cost functional and the closure of the ICU constraint,  
 we would be led to work with 
  topologies equivalent to those used in the proof of Theorem \ref{thm:ExistenceOptimalControl} and to the same computations.
\end{remark}

The performed viability analysis provides a route for designing a  state-dependent control policy (also known as state-feedback control) in order to minimize/bound the cost~\eqref{eq:CostFunctionsal} and to fulfill the state constraint~\eqref{ICU}. Indeed,  in Theorem~\ref{thm:ViabilityDelay} we have proven that, for any initial condition $\phi\in \cB$, there exists at least a control action for which the corresponding solution satisfies $S(t)x_\phi\in \cB$ for all $t\in \R_+$.  Rephrasing, we have proven that the so-called \emph{regulation map} (see~\cite[Definition 6.1.2]{Aubin2009}) 
\[
U_\cB(\phi):=\{b\in [\beta_\star, \beta]\;\vert\;f(\phi,b)\in D_\cB(\phi)\},
\]
in non-empty for every $\phi\in \cB$. We recall that, by Lemma~\ref{lemma:ConesCharacterization}, we have  $\cD_\cB(\phi)=T_B(\phi(0))$, which is a closed set (see  Definition~\ref{DefnBoul} and characterization~\eqref{eq:CaracConeConvex}). This implies  that $U_\cB(\phi)$ is compact, for every $\phi\in \cB$, since $f(\phi,\cdot):\R\to \R^2$ is continuous for every $\phi\in \cB$.

 The so-called {\em greedy control strategy} consists in selecting, for every $\phi\in\cB$, 
 a  
 control $\widetilde b(\phi)\in U_\cB(\phi)$  by (locally) minimizing the running cost $G$ in~\eqref{eq:CostFunctionsal} among all controls that keep the solution inside the viable set. Precisely, 
\begin{equation}\label{eq:GreedyAsMinimizing}
\widetilde b(\phi):=\arg\min_{b\in U_\cB(\phi)}G(\beta-b)=\max U_\cB(\phi), 
\end{equation}
where the second equality follows by the fact that the function $b\mapsto G(\beta-b)$ is strictly decreasing.

In the subsequent statement, we explicitly develop the expression in~\eqref{eq:GreedyAsMinimizing}.

\begin{lemma}\label{lemma:EquivalentDefinitionControl}
Let  $\Gamma_{\beta_\star}:[\frac{\gamma}{\beta_\star},\widehat s_{\beta_\star}]\to [0,i_M]$ be the curve defined in~\eqref{eq:CurvaGammaStar} and $B\subset C$ be the set defined in~\eqref{RAB}. Let us introduce the sets
\[
\begin{aligned}
S_1&:=\text{\emph{co}}\big\{(\frac{\gamma}{\beta},i_M),\,(\frac{\gamma}{\beta_\star},i_M)\big
\}\subset \partial B,\\
S_2&:=\big\{(s,i)\in T\ \vert\ s\in [\frac{\gamma}{\beta_\star}, \widehat s_{\beta_\star}] \text{ and } i =\Gamma_{\beta_\star}(s)\big\}\subset \partial B.
\end{aligned}
\]
The greedy control policy $\bt:\cB\to [\beta_\star,\beta]$ defined  in~\eqref{eq:GreedyAsMinimizing} turns out to be 
\begin{equation}\label{eq:ControlPolicy}
\bt(\phi)=\begin{cases}
\beta,\;\;\;&\text{if }\phi(0)\in B\setminus (S_1\cup S_2),\\
\min\big\{\beta,\frac{\gamma i_M}{s_\phi(0)i_\phi(-h)}\big \},\;\;\;&\text{if }\phi(0)\in S_1,\\
\min \big\{\beta,\;\beta_\star\frac{i_\phi(0)}{i_\phi(-h)}\big\},\;\;\;&\text{if }\phi(0)\in S_2,
\end{cases} 
\end{equation}
 with the convention $1/0=+\infty$.
\end{lemma}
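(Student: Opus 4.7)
The plan is to compute the regulation map
$U_\cB(\phi) = \{b \in [\beta_\star,\beta] : f(\phi,b) \in D_\cB(\phi)\}$
pointwise in $\phi \in \cB$ and then take its maximum; since $b \mapsto G(\beta-b)$ is strictly decreasing, $\widetilde b(\phi) = \max U_\cB(\phi)$. By Lemma~\ref{lemma:ConesCharacterization} applied with $\cK_S = \cB$, $S = C$, $K = B$, one has $D_\cB(\phi) = T_B(\phi(0))$, so the condition depends on $\phi$ only through $(\phi(0), i_\phi(-h))$, the latter entering through $f$.

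I would proceed by dichotomy on the position of $\phi(0)$. If $\phi(0) \in \mathrm{int}(B)$ then $T_B(\phi(0)) = \R^2$ and $U_\cB(\phi) = [\beta_\star,\beta]$, hence $\widetilde b(\phi) = \beta$. If $\phi(0) \in \partial B$, I would identify which smooth piece of the boundary contains it, use the characterization \eqref{eq:COneGradient} to write $T_B(\phi(0))$ as the intersection of half-spaces defined by the outer normals to the active constraints, and translate $f(\phi,b) \in T_B(\phi(0))$ into a finite family of scalar inequalities that are linear in $b$. For the pieces outside $S_1 \cup S_2$, namely the two coordinate axes, the diagonal $s+i=1$, and the horizontal segment $\{i = i_M,\ s < \gamma/\beta\}$, the computations follow the pattern already carried out in the proof of Theorem~\ref{thm:ViabilityDelay}(1) (with $B$ in place of $A$) and show that every $b \in [\beta_\star,\beta]$ satisfies the inequality; hence $\widetilde b(\phi) = \beta$ on $B \setminus (S_1 \cup S_2)$. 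On $S_1$, the outer normal is $v = (0,1)$ and the single inequality $b\, s_\phi(0)\, i_\phi(-h) \leq \gamma i_M$ gives the stated upper bound $\gamma i_M/(s_\phi(0)\, i_\phi(-h))$ (with the convention $1/0 = +\infty$ covering the degenerate case $i_\phi(-h) = 0$).

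The delicate case is $\phi(0) \in S_2$. Here I would recycle the normal-vector relation \eqref{eq:DefTangentB} obtained in the proof of Theorem~\ref{thm:ViabilityDelay}(2) from the ODE \eqref{eq:DefintionGamma} satisfied by $\Gamma_{\beta_\star}$, expand $\langle w, f(\phi,b)\rangle$ for general $b$ rather than only for $b = \beta_\star$ as in \eqref{eq:DefTangentBBB}, and substitute the normal relation to eliminate the free parameter from $w$, obtaining a single scalar bound on $b$. Intersecting this bound with $[\beta_\star,\beta]$ and taking the maximum then yields the third branch of formula~\eqref{eq:ControlPolicy}. The main obstacle is precisely this arithmetic manipulation on $S_2$, which must be carried out carefully in order to match the stated expression; at the corner points where two pieces of $\partial B$ meet, the tangent cone is the intersection of the respective half-spaces, so the greedy bound is the minimum of the two candidate upper bounds, and consistency with the preceding three cases is then immediate. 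Throughout, non-emptiness of $U_\cB(\phi)$ (and hence existence of the maximum) is guaranteed by the viability of $\cB$ established in Theorem~\ref{thm:ViabilityDelay}, together with the closedness of $T_B(\phi(0))$ and the continuity of $b \mapsto f(\phi,b)$ observed just before the statement.
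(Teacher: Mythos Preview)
Your proposal is correct and follows essentially the same strategy as the paper: identify $D_\cB(\phi)=T_B(\phi(0))$ via Lemma~\ref{lemma:ConesCharacterization}, compute $U_\cB(\phi)$ by a boundary case analysis, and take the maximum. The $S_1$ and $S_2$ cases are handled exactly as in the paper, including the use of the normal-vector relation~\eqref{eq:DefTangentB} on $S_2$.

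The only notable difference is in the treatment of $\phi(0)\in B\setminus(S_1\cup S_2)$. The paper does not re-examine the individual boundary pieces; instead it observes that $B\setminus(S_1\cup S_2)\subset A\cup\operatorname{int}(B)$ and invokes the forward invariance of $\cA$ from Theorem~\ref{thm:ViabilityDelay}(1): for $\phi(0)\in A$ one has $f(\phi,b)\in T_A(\phi(0))\subseteq T_B(\phi(0))$ for every $b$, so $U_\cB(\phi)=[\beta_\star,\beta]$ immediately. Your direct piece-by-piece check of $\partial B\setminus(S_1\cup S_2)$ (axes, diagonal, the horizontal segment $\{i=i_M,\ s<\gamma/\beta\}$) is a perfectly valid alternative and in fact slightly more self-contained, since it does not rely on the set-inclusion shortcut; the paper's route is shorter because it recycles Theorem~\ref{thm:ViabilityDelay}(1) wholesale rather than repeating the scalar-product computations.
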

\begin{proof}
The expression~\eqref{eq:ControlPolicy} is obtained  by explicitly solving the maximum problem in~\eqref{eq:GreedyAsMinimizing}.
We proceed by cases.
\begin{itemize}[leftmargin=*]
\item Let us suppose that $\phi\in \cC$ is such that $\phi(0)\in B\setminus (S_1\cup S_2)$. We first note that, by definition of $S_1$ and $S_2$, we have  $B\setminus (S_1\cup S_2)\subset A\cup \text{int}(B)$ (for a graphical illustration, see Figure~\ref{Figure:FirstPlot}). If $\phi(0)\in \text{int}(B)$ then $\cD_\cB(\phi)=T_B(\phi(0))=\R^2$ and thus, trivially,    $U_\cB(\phi)=[\beta_\star,\beta]$. If $\phi(0)\in A$, by the viability analysis provided in Theorem~\ref{thm:ViabilityDelay}, we have that, for every $b\in [\beta_\star,\beta]$,  $f(\phi,b)\in T_A(\phi(0))=D_{\cA}(\phi)\subseteq D_{\cB}(\phi)$; this implies that $U_\cB(\phi)=[\beta_\star,\beta]$. In both cases,  $\wt b(\phi)=\max [\beta_\star,\beta]=\beta$.
\item If $\phi\in \cC$ is such that $\phi(0)\in S_1$, we have that $\cD_\cB(\phi)=T_B(\phi(0))=\{(v_1,v_2)\in\R^2\;\vert\;v_2\leq 0\}$. By the expression  \eqref{eq:DefnDelSistema} of $f$,  and since $i_\phi(0)=i_M$, then we get  
\[
U_\cB(\phi)=\{b\in [\beta_\star, \beta]\;\vert\; bs_\phi(0)i_\phi(-h)-\gamma i_M\leq 0\},
\]
implying that $\bt(\phi)=\max U_\cB(\phi)=\min\{\beta, \frac{\gamma i_M}{s_\phi(0)i_\phi(-h)}\}$.
\item 
Finally, suppose $\phi\in \cC$ is such that $\phi(0)\in S_2$. As stated  in equation~\eqref{eq:DefTangentB} in the  proof of Theorem~\ref{thm:GreedyControlPolicy}, we have that
\[
\cD_\cB(\phi)=T_B(\phi(0))=\{f\in \R^2\;\vert\;\inp{w}{f}\leq 0\}
\]
where $w=(w_1,w_2)\in \R^2$ is a non-zero normal vector to $B$ (or, equivalently, to the curve $\Gamma_{\beta_\star}$) at $\phi(0)$, thus satisfying $w_2>w_1>0$ and $(w_2-w_1)\beta_\star s_\phi(0)i_M-w_2\gamma i_\phi(0)=0$, as in~\eqref{eq:DefTangentB}.
 Recalling the definition of $f$ in~\eqref{eq:DefnDelSistema}, we thus have 
\[
U_\cB(\phi)=\{b\in [\beta_\star, \beta]\;\vert\; \inp{w}{f(\phi,b)}\leq 0\}=\{b\in [\beta_\star, \beta]\;\vert\; (w_2-w_1)bs_\phi(0)i_\phi(-h)-w_2\gamma i_\phi(0)\leq 0\}.
\]
Using the fact that $(w_2-w_1)\beta_\star s_\phi(0)i_M=w_2\gamma i_\phi(0)$,  we have
\[
\bt(\phi)=\max U_\cB(\phi)=\min\{\beta,\, \frac{w_2\gamma i_\phi(0)}{(w_2-w_1)s_\phi(0)i_\phi(-h)}\}=\min\{\beta,\, \beta_\star\frac{i_\phi(0)}{i_\phi(-h)}\},
\]
as required.
\end{itemize}
\end{proof}

In the following statement we provide some important properties of the greedy control strategy and of the resulting controlled (also known as \emph{closed-loop}) solutions.

\begin{theorem}[greedy control policy and closed-loop solutions]\label{thm:GreedyControlPolicy}
Given the function $\wt F:\cB\to \R^2$ defined  by
$\wt F(\phi)=f(\phi, \bt(\phi))$ (with $f$ defined in~\eqref{eq:DefnDelSistema}), there exists a solution $x_\phi:[-h,+\infty)\to \R^2$ to 
\begin{equation}\label{eq:ClosedLoop}
\begin{cases}
x'(t)=\wt F(S(t)x),\\
S(0)x=\phi\in \cB,
\end{cases}
\end{equation} 
where the operator $S(t)$ has been introduced in \eqref{S(t)}.

Given any $\phi\in \cB$, we have that
\begin{enumerate}
    \item $x_\phi(t)\in B$ for all $t\in \R_+$;
    \item  defining $\bt_\phi:\R_+\to [\beta_\star,\beta]$ by
\begin{equation}\label{eq:ControlPolicyOpenLoop}
\bt_\phi(t):=\bt(S(t)x_\phi),
\end{equation}
with $\bt$ defined as in~\eqref{eq:ControlPolicy},
it holds that 
\begin{enumerate}
\item $x_\phi$ is the unique solution to the Cauchy problem  \eqref{eq:System}-\eqref{icphi} with $b=\bt_\phi$;
\item 
$\bt_\phi$ is eventually constant equal to $\beta$;
\item the corresponding cost~\eqref{eq:CostFunctionsal} is finite.
\end{enumerate}
\end{enumerate}
Moreover, if $\phi\in \cB\cap \cT_0$ (i.e., avoiding the trivial case of $i_\phi$ identically zero in $[-h,0]$), we also have 
\begin{enumerate}
\setcounter{enumi}{2}
\item there exists a $T=T(\phi)\geq0$ such that $x_\phi(t)\in R$ (see \eqref{RAB}) for all $t\geq T$.
\end{enumerate}
\end{theorem}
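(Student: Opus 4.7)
The plan is to build $x_\phi$ by concatenating phase-wise solutions and then verify the properties in order. The key observation is that the greedy policy $\bt$, by Lemma~\ref{lemma:EquivalentDefinitionControl}, is piecewise defined: it equals $\beta$ off the one-dimensional boundary strata $S_1 \cup S_2$ and is an explicit continuous function of the history on each of $S_1$ and $S_2$.

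I would construct $x_\phi$ on successive phases. On the complement of $S_1 \cup S_2$ the closed-loop reduces to~\eqref{eq:System} with constant transmission rate $\beta$, whose local solvability is provided by Lemma~\ref{lemma:Well-PosednessLemma}; whenever the trajectory hits $S_1$ or $S_2$ the formula~\eqref{eq:ControlPolicy} for $\bt$ gives a Carath\'eodory right-hand side, and a standard local existence result for delay equations yields continuation along the boundary. The fact that on each phase the selected control lies in $U_\cB(S(t)x_\phi)$ together with the viability assertion of Theorem~\ref{thm:ViabilityDelay}(2) ensures $x_\phi(t) \in B$ for all $t \in \R_+$, which is property~(1). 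Property~(2a) follows immediately from Lemma~\ref{lemma:Well-PosednessLemma}, since $\bt_\phi$ is bounded and measurable with values in $[\beta_\star, \beta]$ and $x_\phi$ solves the Cauchy problem with $b = \bt_\phi$ by construction.

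For property~(3), under the extra hypothesis $\phi \in \cB \cap \cT_0$, I would show that the trajectory enters $R$ in finite time. On $S_1$ the greedy policy fixes $i \equiv i_M$ while $s$ satisfies $s' = -\bt\, s\, i_\phi(-h)$; by Lemma~\ref{lemma:COnvergence}(1)(b) the factor $i_\phi(-h)$ is strictly positive once $t \geq 2h$, so $s$ strictly decreases and must cross $\gamma/\beta$ in finite time. On $S_2$ the analogous computation along $\Gamma_{\beta_\star}$, exploiting~\eqref{eq:DefTangentBBB} from the proof of Theorem~\ref{thm:ViabilityDelay}, shows that $s$ again strictly decreases along the curve, driving the trajectory into $S_1$ or directly into $R$ in finite time. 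Once $x_\phi(t_0) \in R$ for some $t_0$, the monotonicity of $s_\phi$ from Lemma~\ref{lemma:COnvergence} together with Lemma~\ref{lemma:DecreasingProperty} applied with $b = \beta$ yield $x_\phi(t) \in R$ and $i_\phi(t) \leq i_M$ for all $t \geq t_0$. By Lemma~\ref{lemma:EquivalentDefinitionControl}, the greedy policy then returns $\bt_\phi(t) = \beta$ for $t \geq t_0$, giving~(2b); and~(2c) follows because $G(\beta - \bt_\phi) = 0$ on $[t_0, \infty)$ and is bounded on $[0, t_0]$.

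The main obstacle is controlling the time spent on $S_2$: the closed-loop dynamics there is an honest delay equation whose sliding behavior on $\Gamma_{\beta_\star}$ is not purely geometric, since the explicit form of $\bt$ depends on $i_\phi(-h)$. A careful comparison argument based on the strict positivity $i_\phi(-h) > 0$ from Lemma~\ref{lemma:COnvergence}(1)(b) and on the sign of $\inp{v}{f(\phi,\bt(\phi))}$ against an outer normal $v$ to $B$ along $\Gamma_{\beta_\star}$ is needed to bound the sojourn time and conclude.
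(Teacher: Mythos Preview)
Your main gap is the existence argument. Concatenating phase-wise solutions does not rigorously produce a solution of $x'=\wt F(S(t)x)$, because $\wt F$ is discontinuous across $S_1\cup S_2$ and you have not excluded chattering (accumulation of switching times), nor justified that the trajectory actually slides along $S_2$ once it reaches it. The formula~\eqref{eq:ControlPolicy} makes the velocity tangent to $\Gamma_{\beta_\star}$ at the instant of contact, but with delay the control at time $t+\varepsilon$ depends on $i_\phi(t+\varepsilon-h)$, so tangency is not automatically preserved; ``standard local existence for delay equations'' does not cover this. Invoking Theorem~\ref{thm:ViabilityDelay}(2) to conclude $x_\phi(t)\in B$ is also circular: that theorem asserts the existence of \emph{some} viable control, not that the greedy selection produces a viable trajectory.

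The paper handles existence and item~(1) together in Lemma~\ref{lemma:WellPosednessSoltuions} via a Krasovskii regularization: replace $\wt F$ by the set-valued $\wt G(\phi)=\text{co}\{f(\phi,\beta),\wt F(\phi)\}$ on $S_1\cup S_2$ (and $\wt G=\{\wt F\}$ elsewhere), check that $\wt G$ is upper semicontinuous with nonempty convex compact values, and apply Haddad's viability theorem for delayed inclusions~\cite{Haddad81} to obtain a global solution of $x'\in\wt G(S(t)x)$ with $S(t)x_\phi\in\cB$. One then verifies pointwise that $\wt G(\phi)\cap T_B(\phi(0))=\{\wt F(\phi)\}$, so any such viable solution in fact solves~\eqref{eq:ClosedLoop}. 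This absorbs all the sliding/chattering issues into the inclusion framework.

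Your route to (2b), (2c), (3) is also more laborious than needed and inherits the ``sojourn on $S_2$'' obstacle you flag. Once existence and $x_\phi(t)\in B$ are established, $x_\phi$ is a solution of~\eqref{eq:System} with the admissible input $\bt_\phi\in\cU$, so Lemma~\ref{lemma:COnvergence} applies directly: $i_\phi(t)\to 0$ and $s_\phi(t)\to s_{\phi,\infty}<\gamma/\beta_\star$, hence there is $T_1$ with $x_\phi(t)\notin S_1\cup S_2$ for $t\ge T_1$, giving (2b); (2c) is then a one-line estimate. For (3), since $\bt_\phi$ is eventually equal to $\beta$, part~(2) of Lemma~\ref{lemma:COnvergence} yields $s_{\phi,\infty}<\gamma/\beta$. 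No direct analysis of the dynamics on $S_1$ or $S_2$ is required.
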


\begin{remark}\label{gcsc}Let us make the following remarks.
\begin{enumerate} [leftmargin=*]  
\item The control policy $\widetilde b_\phi$ introduced in~\eqref{eq:ControlPolicyOpenLoop} can be more explicitly written as follows:
\begin{equation}\label{eq:ControlPolicyExplicitFeedback}
\bt_\phi(t)=\begin{cases}
\beta,\;\;\;&\text{if }(s_\phi(t), i_\phi(t))\in B\setminus (S_1\cup S_2),\\
\min\big\{\beta,\frac{\gamma i_M}{s_\phi(t)i_\phi(t-h)}\big \},\;\;\;&\text{if }(s_\phi(t), i_\phi(t))\in S_1,\\
\min \big\{\beta,\;\beta_\star\frac{i_\phi(t)}{i_\phi(t-h)}\big\},\;\;\;&\text{if }(s_\phi(t), i_\phi(t))\in S_2.
\end{cases} 
\end{equation}
Thus, the controller only needs to know the actual state of the epidemic (i.e., $(s_\phi(t),i_\phi(t))$) and the infected population at time $t-h$, (i.e., $i_\phi(t-h)$) to successfully implement the greedy control strategy.

\item The control $\bt(\phi)$ is well-defined for $\phi$ such that $\phi(0)\in S_1\cap S_2$. Indeed, if $\phi(0)\in S_1\cap S_2=\{(\frac{\gamma}{\beta_\star},i_M)\}$ we have $\frac{\gamma i_M}{s_\phi(0)i_\phi(-h)}=\beta_\star\frac{i_M}{i_\phi(-h)}$.
\item The case in which $\bt(\phi)=\beta_\star$  only occurs when $\phi(0)\in S_2$ and $i_\phi(-h)=i_M$. Hence, the case in which $\bt_\phi(t)=\beta_\star$ for a.e.\ $t$ in an interval $J$ only occurs when $(s_\phi(t),i_\phi(t))\in S_2$ and $i_\phi(t-h)=i_M$ for all $t\in J$. This scenario may happen only in the time interval $[0,h]$ and for a suitable initial condition. Then, in general, one cannot expect that a constant control regime $\bt_\phi=\beta_\star$ occurs, except than in some very particular cases.
\end{enumerate}
\end{remark}

\begin{proof} 
As a preliminary step to the proof of the existence of a solution to the Cauchy problem \eqref{eq:ClosedLoop}, we note  that 
\begin{equation}\label{eq:EqualityLemmaForInv}
 \wt F(\phi)
 \in \cD_\cB(\phi)\;\,\forall\,\phi\in \cB.
 \end{equation} 

Indeed, for any $\phi\in\cB$
we have
$\bt(\phi)=\max U_\cB(\phi)\in U_\cB(\phi)$.  Thus, $\wt F(\phi)=f(\phi,\bt(\phi))\in \cD_\cB(\phi)$ by definition of $U_\cB(\phi)$.

 Under this condition, the  existence of solutions $x_\phi$ to~\eqref{eq:ClosedLoop} is proved  in Lemma~\ref{lemma:WellPosednessSoltuions} in Appendix, together with the property  $S(t)x_\phi\in \cB$, for all $t\in \R_+$, which implies $S(t)x_\phi(0)=x_\phi(t)\in B$, for all $t\in \R_+$. 
 Then, part \emph{(1)} of the statement is proved.
 
To prove {\em(2)(a)}, it is enough to observe that
$$
\wt F(S(t)x_\phi)=
f(S(t)x_\phi,\bt_\phi(t)) 
=(-\bt_\phi(t)s_\phi(t)i_\phi(t-h)\, ,\,\bt_\phi(t)s_\phi(t)i_\phi(t-h)-\gamma i_\phi(t)), 
$$
which means that the solution $x_\phi$ to \eqref{eq:ClosedLoop}  
is also a solution to \eqref{eq:System}-\eqref{icphi} with $b=\bt_\phi$, and such Cauchy problem has a unique solution by Lemma \ref{lemma:Well-PosednessLemma}. Of course, this 
implies (a posteriori) that 
also \eqref{eq:ClosedLoop} has a unique solution.

Let us now prove~\emph{(2)(b)}. In Lemma~\ref{lemma:COnvergence} we have proven that 
$\lim_{t\to +\infty}i_\phi(t)=0$ and $s_{\phi,\infty}=\lim_{t\to +\infty}s_\phi(t)\in [0,\frac{\gamma}{\beta_\star})$. Then, there exists a $T_1(\phi)\geq 0$ such that $x_\phi(t)\notin S_1\cup S_2$ for all $t\geq T_1(\phi)$. By~\eqref{eq:ControlPolicy} and \eqref{eq:ControlPolicyOpenLoop}, this implies that $b_{\phi}(t)=b(S(t)x_\phi)=\beta$ for all $t\geq T_1(\phi)$, since $S(t)x_\phi(0)=x_\phi(t)\notin S_1\cup S_2$. This proves~\emph{(2)(b)}.

\emph{(2)(c)} holds since
\[
\int_0^{+\infty}G(\beta-\bt_\phi(t))\,dt=\int_0^{T_1(\phi)}G(\beta-\bt_\phi(t))\,dt\leq \int_0^{T_1(\phi)}G(\beta-\beta_\star)\,dt=T_1(\phi)G(\beta-\beta_\star).
\] 
To prove part \emph{(3)}, suppose $\phi\in \cB\cap \cT_0$. Since we have proven that $b_\phi$ is eventually equal to $\beta$, by part \emph{(2)} of Lemma~\ref{lemma:COnvergence} we have that  $s_{\phi,\infty}\in [0,\frac{\gamma}{\beta})$, which in turns implies that there exists a $T=T(\phi)\geq 0$ such that $s_\phi(t)\leq \frac{\gamma}{\beta}$ (and thus $x_\phi(t)\in R$) for all $t\geq T$, and the proposition is completely proved.
\end{proof}
A first direct consequence of the existence of the ``greedy'' feedback control policy defined in~\eqref{eq:ControlPolicy} is stated below. 
\begin{corollary}
For any prescribed initial condition $\phi\in \cB$, the optimal control problem $\cP_\phi$  admits a solution with a finite cost.
\end{corollary}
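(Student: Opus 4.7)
The plan is to combine the two main ingredients already available: the abstract existence result (Theorem~\ref{thm:ExistenceOptimalControl}) and the concrete construction of a feasible control via the viability-driven greedy policy (Theorem~\ref{thm:GreedyControlPolicy}).

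First, since $\cB\subset\cC$, Theorem~\ref{thm:ExistenceOptimalControl} applied to $\phi\in\cB$ yields the existence of an optimal triple $(u^\ast,s^\ast,i^\ast)$ minimizing the extended functional $J^\infty$ in~\eqref{minF}. What remains is to rule out the degenerate case in which the infimum is $+\infty$, which would make the minimization vacuous; equivalently, I need to exhibit at least one admissible control that simultaneously satisfies the state equation \eqref{eq:System}, the ICU constraint \eqref{ICU}, and yields a finite value of $J$.

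For this, I invoke Theorem~\ref{thm:GreedyControlPolicy}. Starting from $\phi\in\cB$, the theorem produces a closed-loop trajectory $x_\phi$ and an associated control $\bt_\phi$ defined by~\eqref{eq:ControlPolicyOpenLoop}. By part~(2)(a) the pair $(\bt_\phi,x_\phi)$ solves the Cauchy problem~\eqref{eq:System}--\eqref{icphi}; by part~(1), $x_\phi(t)\in B\subset C$ for every $t\in\R_+$, so the ICU constraint $i_\phi(t)\le i_M$ holds; and by part~(2)(c), the corresponding cost $J(\beta-\bt_\phi)$ is finite. Setting $\bar u_\phi(t):=\beta-\bt_\phi(t)\in[0,\beta-\beta_\star]$, this provides an admissible competitor with $J(\bar u_\phi)<+\infty$.

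Consequently, the infimum $\inf_{u\in\cU}J^\infty(u,s,i)$ is bounded above by the finite value $J(\bar u_\phi)$, so any minimizer produced by Theorem~\ref{thm:ExistenceOptimalControl} must in fact satisfy the ICU constraint and attain a finite cost not exceeding $J(\bar u_\phi)$. There is no real obstacle here: the role of the viability analysis is precisely to guarantee non-emptiness of the feasible set for problem $\cP_\phi$ whenever $\phi\in\cB$, and this non-emptiness together with the prior existence theorem immediately delivers the claim.
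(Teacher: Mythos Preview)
Your proof is correct and follows essentially the same approach as the paper's: combine the abstract existence result (Theorem~\ref{thm:ExistenceOptimalControl}) with the feasibility and finite-cost properties of the greedy control furnished by Theorem~\ref{thm:GreedyControlPolicy}. The paper's own proof is the one-line statement that the result ``directly follows'' from these two theorems; your version simply unpacks the argument, and in fact cites the more directly relevant item~(2)(c) (finite cost) rather than item~(3) as the paper does.
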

\begin{proof}
The proof directly follows by Theorem~\ref{thm:ExistenceOptimalControl} and assertion~\emph{(3)} of Theorem~\ref{thm:GreedyControlPolicy}. 
\end{proof}

\begin{remark}
We note that 
the invariance/viability regions in Theorem~\ref{thm:ViabilityDelay} 
 are independent of $h$. In particular, they do not converge, as $h\to 0$, to the regions obtained in the delay-free case (see~\cite{Fre20,AvrFre22,FG2023}), recalled also in the subsequent Theorem \ref{lemma:ViabilityDelayFree}. In the next subsection we strenghten  Assumption~\ref{assum:Minimal} by bounding the velocity of the past evolution of the epidemic. This allows us to obtain a viability analysis and a control policy which do depend on the parameter $h>0$, and that converge, in a sense that will be clarified, to the solutions provided for the delay-free case.
\end{remark}

\subsection{Lipschitz continuous initial conditions}\label{Subsec:locallyLispchitz}

In the previous subsection we supposed to have limited information on the past evolution of the epidemic, only considering Assumption~\ref{assum:Minimal}. We hereafter assume that, in the past $h$ time units, the epidemic dynamic not only was under the warning level (i.e. $i_\phi(t)\leq i_M$ for all $t\in[-h,0]$), but also was evolving with a limited  ``speed'' compatible with the epidemic model. More formally we assume the following. 
\begin{assumption}\label{assum:SEcondAss}
Given a threshold $0<i_M\leq 1$ consider any $L\in \R$ such that 
\begin{equation}\label{eq:BoundLipschitzConstant}
L\geq i_M\max\{\beta,\gamma\}>0.
\end{equation} 
We assume that the initial condition $\phi\in \cC$ of~\eqref{eq:System} satisfies
\[
 \phi(t)\in C\;\;\; \forall \;t\in [-h,0],
\]
with $C=\{(s,i)\in T\;\vert\;i\leq i_M\}$, and
\[
\phi\in \cC_{L,|\cdot|_{\max}}=\{\varphi\in \cC\;\vert\;|\varphi(t_1)-\varphi(t_2)|_{\max}\leq L|t_1-t_2|\;\;\forall t_1,t_2\in [-h,0]\},
\]
where $|x|_{\max}:=\max\{|x_1|,|x_2|\}$, for any $x=(x_1,x_2) \in \R^2$.
\end{assumption}

\begin{remark}
    The choice of the infinity/max norm is motivated by the fact that the dynamics in~\eqref{eq:System} are only affected by the delayed value of the $i$-component ($i(-h)$), while the delayed $s$-component ($s(-h)$) does not play any role. The $\max$ norm, which, intuitively, is only affected by the ``worst case''  of any component (considered separately), is thus a natural choice. Nevertheless, 
the subsequent analysis can  be adapted to the choice of any other  norm in $\R^2$.
\end{remark}

\begin{remark}
The lower bound~\eqref{eq:BoundLipschitzConstant} on the Lipschitz constant $L>0$ is motivated by noting that, for any $x=(s,i)\in C$ such that  $i\leq i_M$ we have
\begin{equation*}
|-\beta s i|\leq \beta i_M\,\;\text{ and }\,\;
|\beta si-\gamma i|\leq i_M\max\{\beta ,\gamma \}.
\end{equation*}
Thus,  
\begin{equation}\label{eq:InequalityLipschitzConstant}
|f(\phi,b)|_{\max}\leq L\;\;\;\forall \,\phi\in \cC,\;\forall \,b\in U,
\end{equation}
and, hence,
$L\geq i_M\max\{\beta,\gamma\}$
represents a uniform upper bound to the speed modulus of the solution to~\eqref{eq:System} with initial condition $\phi\in \cC$ such that $\phi(s)\in C$, for all $s\in [-h,0]$.
Intuitively, in Assumption~\ref{assum:SEcondAss}, we are supposing that the epidemic, in the past uncontrolled interval of time $[-h,0]$, has evolved at a bounded speed, and this bound is assumed to be not smaller than the one holding forward in time, according with the model. 
\end{remark}

In order to retrace the analysis performed in Subsection~\ref{subsec:Arbitrary}, we need to introduce auxiliary (non-delayed) systems that mimic the ``worst-case'' behaviour of the delay system~ \eqref{eq:System}. Since the considered initial conditions are supposed to satisfy  Assumption~\ref{assum:SEcondAss}, we first define a scalar function that models the maximal gap between $i(0)$ and $i(-h)$. Namely,  for any $L,h>0$, we consider the Lipschitz continuous function $\psi_{L,h}:\R\to \R$ (see Figure~\ref{Figure:PlotPsi}) defined by
\begin{equation}\label{tfLh}
\psi_{L,h}(i):=\begin{cases}
-i_M,\;\;\;&\text{if }  i\leq -i_M-Lh,\\
i+Lh,\;\;\;&\text{if }  -i_M-Lh< i\leq i_M-Lh,\\
i_M,\;\;\;&\text{if } i_M-Lh< i.
\end{cases}
\end{equation}

\begin{figure}[h]
\centering
  \includegraphics[scale=1]{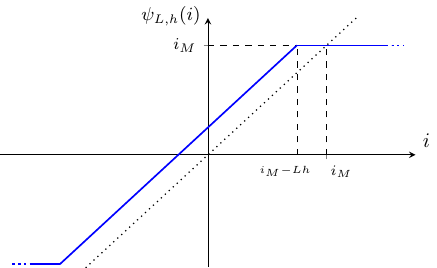}
\caption{The truncating function $\psi_{L,h}$ defined in~\eqref{tfLh}.}
\label{Figure:PlotPsi}
\end{figure}

 As done in Subsection \ref{subsec:Arbitrary},  we are now going to  introduce  additional state-space curves which characterize the (maximal) forward invariant and viable sets with past in $C$ of~\eqref{eq:System} (see  Definition~\ref{defn:MaximalViableInvariance}). Actually, we characterize the boundaries of these regions by providing the ``\emph{worst case behavior}''  for the two cases $b\equiv \beta$ and $b\equiv \beta_\star$, respectively. We will thus consider (backward) solutions of~\eqref{eq:System} with an artificial value of $i(t-h)$, fixed equal to $\psi_{L,h}(i(t))$, as formalized in the sequel. It is worth noting that if $Lh>i_M$ than we have $\psi_{L,h}(i)=i_M$ for any $i>0$ and we recover exactly what we done in Subsection \ref{subsec:Arbitrary}.  
 The backward solution for $b=\beta$ with initial point  $s(0)=\gamma/\beta$, $i(0)=i_M$, will be proved to define the boundary $\Gamma_{\beta,L,h}$ of the forward invariant set, in orange in Figure  \ref{Figure:SecondPlot}. An analogous construction will be done for $b=\beta_\star$ to obtain the boundary $\Gamma_{\beta_\star,L,h}$ of the maximal viable sets.

\subsubsection{Construction of the curve $\Gamma_{\beta,L,h}$.} 
Let us consider the non-linear differential equation
\begin{equation}\label{eq:Non-LinearBeta}
\begin{cases}
s'(t)=\beta s(t) \psi_{L,h}(i(t)),\\
i'(t)=-\beta s(t) \psi_{L,h}(i(t))+\gamma i(t).
\end{cases}
\end{equation}
The right-hand side in~\eqref{eq:Non-LinearBeta} is locally Lipschitz, and since $|\psi_{L,h}(i)|\leq i_M$ for all $i\in \R$, we also have that
\begin{equation}\label{Eq:subLinearBoundSIR}
|(\beta s \psi_{L,h}(i)\, ,\,-\beta s\psi_{L,h}(i)+\gamma i)|_{\max}\leq A|(s,i)|_{\max}, \;\;\forall (s,i)\in \R^2,
\end{equation}
for a suitable $A>0$.
This  implies that, for any prescribed initial condition, the solution to~\eqref{eq:Non-LinearBeta} is unique and globally defined, see~\cite[Theorem 2.17]{Teschl12}.
We thus denote by $\Psi_{\beta,L,h}=(\ws_{\beta,L,h}, \,\wi_{\beta,L,h}):\R_+\to \R^2$ the solution to~\eqref{eq:Non-LinearBeta} corresponding to the initial condition $(s(0),i(0))=(\frac{\gamma}{\beta}, i_M)$. 
 We define
\[
T_{\beta,L,h}:=\sup\{\bar t\geq 0\;\vert\;\wi_{\beta,L,h}(t)>0\;\,\forall\,t\in [0,\bar t)\}.
\]
 Let us denote $\widehat s_{\beta,L,h}:=\lim_{t\to T_{\beta,L,h}^-}\ws_{\beta,L,h}(t)$.
Since the solutions to~\eqref{eq:Non-LinearBeta} satisfy $s(t)=s_0 \,e^{\beta  \int_0^t\psi_{L,h}(i(\tau))\,d\tau}$ and $\psi_{L,h}(i)>0$ for any $i\in \R_+$, we have that $\ws_{\beta,L,h}$ is strictly positive and strictly increasing in $[0,T_{\beta,L,h})$.

Thus, we can define $\Gamma_{\beta,L,h}:[\frac{\gamma}{\beta},\widehat s_{\beta,L,h})\to \R_+$ as the function representing the curve $\Psi_{\beta,L,h}$ in $[0,T_{\beta,L,h})$ as a graph  $i(s)=\Gamma_{\beta,L,h}(s)$. 
 In Lemma~\ref{lemma:ConvexityAuxiliaryLocallyLipschitz} in Appendix we show that $\Gamma_{\beta,L,h}$ is strictly decreasing and concave in $[\frac{\gamma}{\beta},\widehat s_{\beta,L,h})$.
 Since  $\lim_{s\to \widehat s_{\beta,L,h}^{\,-}}\Gamma_{\beta,L,h}(s)\geq 0$,  we have that  $\widehat s_{\beta,L,h}$ is finite. 
 In turn, this implies that $T_{\beta,L,h}<+\infty$. Indeed, suppose by contradiction that $T_{\beta, L,h}=+\infty$; then $\ws_{\beta,L,h}'(t)=\beta \ws_{\beta,L,h}(t)\psi_{L,h}(\wi_{\beta,L,h}(t))\geq \gamma Lh$ for all $t\in (0,T_{\beta, L,h})=(0,+\infty)$, in contradiction with $\widehat s_{\beta,L,h}=\lim_{t\to T_{\beta,L,h}^-}\ws_{\beta,L,h}(t)<+\infty$.  
Moreover, recalling that $\ws_{\beta,L,h}(t)$ is continuous and strictly increasing, $T_{\beta,L,h}<+\infty$   implies that
 \[
\lim_{s\to \widehat s_{\beta,L,h}^-}\Gamma_{\beta,L,h}(s)=\lim_{t\to T_{\beta,L,h}^-}\Gamma_{\beta,L,h}(\ws_{\beta,L,h}(t))=\lim_{t\to T_{\beta,L,h}^-}\wi_{\beta,L,h}(t)=0,
 \]
 where the last equality follows by definition of $T_{\beta,L,h}$.
 We can thus extend  $\Gamma_{\beta,L,h}$ on the closed interval $[\frac{\gamma}{\beta},\widehat s_{\beta,L,h}]$ by setting $\Gamma_{\beta,L,h}(\widehat s_{\beta,L,h})=0$. See Figure~\ref{Figure:SecondPlot}
 for a graphical representation of such curve.
 
\subsubsection{Construction of the curve $\Gamma_{\beta_\star,L,h}$.}  Similarly, denote by $\Psi_{\beta_\star,L,h}=(\ws_{\beta_\star,L,h}, \,\wi_{\beta_\star,L,h}):\R_+\to \R^2$  the solution to the differential equation
\begin{equation}\label{eq:Non-LinearBetaStar}
\begin{cases}
s'(t)=\beta_\star s(t) \psi_{L,h}(i(t))\\
i'(t)=-\beta_\star s(t) \psi_{L,h}(i(t))+\gamma i(t)
\end{cases}
\end{equation}
with initial condition $(s(0),i(0))=(\frac{\gamma}{\beta_\star}, i_M)$. We consider
$
T_{\beta_\star,L,h}:=\sup\{\bar t\geq 0\;\vert\;\wi_{\beta^ \star,L,h}(t)>0\;\,\forall\,t\in [0,\bar t)\}
$, define $\widehat s_{\beta_\star,L,h}:=\lim_{t\to T_{\beta_\star,L,h}^- }\ws_{\beta_\star,L,h}(t)$, and represent the  curve $\Psi_{\beta_\star,L,h}:[0,T_{\beta_\star,L,h})\to \R^2$ as a graph $i(s)=\Gamma_{\beta_\star,L,h}(s)$, with $\Gamma_{\beta_\star,L,h}:[\frac{\gamma}{\beta_\star},\widehat s_{\beta_\star,L,h})\to \R_+$, see Figure~\ref{Figure:SecondPlot}. In Lemma~\ref{lemma:ConvexityAuxiliaryLocallyLipschitz}, it is proved that  $\Gamma_{\beta_\star,L,h}$ is concave and strictly decreasing, also implying that $\widehat s_{\beta_\star,L,h}$ is finite. Arguing as before,
we can thus extend  $\Gamma_{\beta_\star,L,h}$ by continuity on $[\frac{\gamma}{\beta_\star},\widehat s_{\beta_\star,L,h}]$ by setting $\Gamma_{\beta_\star,L,h}(\widehat s_{\beta_\star,L,h})=0$.

 We can now provide the viability result.
 
\begin{theorem}\label{lemma:ViabilityDelayLipschitz}
Consider the sets $R, R_\star\subset C$  defined in~\eqref{RAB}, and
 define the sets $A_{L,h}, B_{L,h}\subset C$ by
\begin{equation}\label{eq:DefinitionAElleAcca}
\begin{aligned}
A_{L,h}&:=R \cup\Big \{(s,i)\in T\;\vert\; s\in [\frac{\gamma}{\beta},\widehat s_{\beta,L,h}],\;\,i\leq \Gamma_{\beta,L,h}(s)\Big \},\\
B_{L,h}&:=R_\star \cup\Big \{(s,i)\in T\;\vert\; s\in [\frac{\gamma}{\beta_\star},\widehat s_{\beta_\star,L,h}],\;\,i\leq \Gamma_{\beta_\star,L,h}(s)\Big\}.
\end{aligned}
\end{equation}
The following propositions hold.
\begin{enumerate}
\item $A_{L,h}$ is the maximal forward invariant set with $\cC_{L,|\cdot|_{\max}}$-past in $C$.
\item   $B_{L,h}$ is the maximal viable set with $\cC_{L,|\cdot|_{\max}}$-past in $C$.
\end{enumerate}
\end{theorem}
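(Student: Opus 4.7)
The plan is to apply Corollary~\ref{cor:ViabilitySIR} with $\cK=\cA_{L,h}:=\{\phi\in\cC_{L,|\cdot|_{\max}}\,\vert\,\phi(0)\in A_{L,h},\ \phi(t)\in C\ \forall\,t\in[-h,0]\}$, and analogously for $\cB_{L,h}$. The sets $A_{L,h}$ and $B_{L,h}$ are compact and convex thanks to Lemma~\ref{lemma:ConvexityAuxiliaryLocallyLipschitz}, so by Lemma~\ref{lemma:ConesCharacterization} the feasible directions reduce to $T_{A_{L,h}}(\phi(0))\cap B_{|\cdot|_{\max}}(0,L)$; the norm-ball constraint is automatic by~\eqref{eq:InequalityLipschitzConstant}, so only the contingent-cone inclusion must be checked. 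The crucial new ingredient, which exploits Assumption~\ref{assum:SEcondAss}, is the pointwise bound
\[
i_\phi(-h)\le\min\{i_\phi(0)+Lh,\,i_M\}=\psi_{L,h}(i_\phi(0)),
\]
obtained by combining the Lipschitz inequality $|i_\phi(-h)-i_\phi(0)|\le Lh$ with $i_\phi(-h)\le i_M$.

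For the forward invariance of $\cA_{L,h}$, the cases $i_\phi(0)=0$, $s_\phi(0)=0$, and $i_\phi(0)=i_M$ with $s_\phi(0)<\gamma/\beta$ are handled verbatim as in the proof of Theorem~\ref{thm:ViabilityDelay}, since only $i_\phi(-h)\le i_M$ is invoked. On the genuinely new piece $\Gamma_{\beta,L,h}$ of $\partial A_{L,h}$, I would take as outer normal at $(s_0,i_0)$ the vector $v=(\beta s_0\psi_{L,h}(i_0)-\gamma i_0,\,\beta s_0\psi_{L,h}(i_0))$, obtained by rotating the tangent field of the auxiliary system~\eqref{eq:Non-LinearBeta}; both components are positive because $\Gamma_{\beta,L,h}$ is strictly decreasing. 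A direct computation then yields
\[
\inp{v}{f(\phi,b)}=\gamma i_0\,s_0\bigl(b\,i_\phi(-h)-\beta\,\psi_{L,h}(i_0)\bigr)\le 0,
\]
thanks to $b\le\beta$ and the Lipschitz-induced bound above, so $f(\phi,b)\in T_{A_{L,h}}(\phi(0))$. Viability of $\cB_{L,h}$ is obtained analogously through the pointwise choice $b=\beta_\star$ on $\partial B_{L,h}$: on the segment $i=i_M$ with $s<\gamma/\beta_\star$ we get $\inp{(0,1)}{f(\phi,\beta_\star)}\le(\beta_\star s_0-\gamma)i_M<0$, and on $\Gamma_{\beta_\star,L,h}$ the same rotation trick gives $\beta_\star\gamma i_0\,s_0\bigl(i_\phi(-h)-\psi_{L,h}(i_0)\bigr)\le 0$.

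The main obstacle will be maximality. Here I would mimic the iterative contradiction argument from the proof of Theorem~\ref{thm:ViabilityDelay}: assuming that some $\wt A\supsetneq A_{L,h}$ defines a forward invariant set with $\cC_{L,|\cdot|_{\max}}$-past in $C$, I would pick $\phi_0$ whose trace $(s_0,i_0)$ lies strictly above $\Gamma_{\beta,L,h}$ and whose past is the saturated $L$-Lipschitz profile with $i_{\phi_0}(-h)=\psi_{L,h}(i_0)$, and choose $b\equiv\beta$. The resulting delayed trajectory then matches that of \eqref{eq:Non-LinearBeta} starting at $(s_0,i_0)$ up to first order in $t$, and a Gronwall-type comparison should keep it strictly above $\Gamma_{\beta,L,h}$ on $[0,h-\varepsilon]$; iterating the construction with new $\phi_k$'s produces, as in the original proof, a trajectory that leaves $\{i\le i_M\}$ in finite time, contradicting the assumed invariance. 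The delicate points are to ensure that the glueing preserves the $L$-Lipschitz constraint across patching instants and that the identity $i_{\phi_k}(-h)=\psi_{L,h}(i_{\phi_k}(0))$ is reproduced at every stage so the comparison with \eqref{eq:Non-LinearBeta} remains tight. Maximality of $B_{L,h}$ is entirely analogous, with $b\equiv\beta_\star$ and comparison against \eqref{eq:Non-LinearBetaStar}.
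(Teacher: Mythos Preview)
Your proposal is correct and follows essentially the same route as the paper: Corollary~\ref{cor:ViabilitySIR} plus Lemma~\ref{lemma:ConesCharacterization}, the observation that the ball constraint is absorbed by~\eqref{eq:InequalityLipschitzConstant}, and the key Lipschitz bound $i_\phi(-h)\le\psi_{L,h}(i_\phi(0))$ on the curved part of the boundary. Your explicit normal $v=(\beta s_0\psi_{L,h}(i_0)-\gamma i_0,\,\beta s_0\psi_{L,h}(i_0))$ and the resulting product $\gamma i_0 s_0\bigl(b\,i_\phi(-h)-\beta\,\psi_{L,h}(i_0)\bigr)$ are just a concrete instance of the paper's abstract normal characterization~\eqref{eq:AuxiliaryViabilityLipschitz}; the two computations are equivalent. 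For maximality the paper simply writes ``similar to Theorem~\ref{thm:ViabilityDelay}, left to the reader'', so your sketch---saturated $L$-Lipschitz past, $b\equiv\beta$, comparison with~\eqref{eq:Non-LinearBeta}, iteration---actually goes further than the paper does, and the two delicate points you flag (preserving the Lipschitz constraint across patchings and reproducing $i_{\phi_k}(-h)=\psi_{L,h}(i_{\phi_k}(0))$ at each stage) are precisely the places where work beyond Theorem~\ref{thm:ViabilityDelay} is needed.
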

\begin{proof}
The idea behind the proof is essentially the same of the proof of Theorem~\ref{thm:ViabilityDelay}, and it relies on 
Lemma~\ref{lemma:ConesCharacterization} and Corollary~\ref{cor:ViabilitySIR}.
We start by proving that $\cA_{L,h}=\left\{\phi\in \cC_{L,|\cdot|_{\max}}\;\vert\;\phi(0)\in A_{L,h},\;\; \phi(s)\in C\;\,\forall\, s\in [-h,0]\right\}$ is forward invariant. As always, it is enough to consider $\phi\in \cA_{L,h}$ such that $\phi(0)\in \partial A_{L,h}$ and prove that 
$F_U(\phi)\subseteq T_{A_{L,h}}(\phi(0))\cap B_{|\cdot|_{\max}}(0,L)$. 
By~\eqref{eq:InequalityLipschitzConstant}, it sufficies to prove that 
\begin{equation}\label{FUTB}
F_U(\phi)\subseteq T_{A_{L,h}}(\phi(0)).
\end{equation}
Let us consider two cases.
\begin{enumerate}[leftmargin=*]
\item Given the set $D= \{(s,i)\in T\;\vert\;s\leq \frac{\gamma}{\beta}\}$, we note that  $A\cap D=A_{L,h}\cap D$. Hence, in $D$ the analysis performed for $A$ in Theorem~\ref{thm:ViabilityDelay} applies also to $A_{L,h}$. Therefore,  
 the inclusion \eqref{FUTB} holds true in the case $\phi(0)\in \partial A_{L,h}\cap D$.
\item In the remaining case,  in which 
$\phi(0)=(s_0,i_0)$ with  $s_0\in (\frac{\gamma}{\beta},\widehat s_{\beta,L,h}]$ and $i_0=\Gamma_{\beta,L,h}(s_0)$,
 we denote the unique normal vector (up to positive scalar multiplication) to $A_{L,h}$ at $(s_0,i_0)$ by $v=(v_1,v_2)\in \R^2\setminus\{0\}$. It is,  by definition, perpendicular to the tangent vector $(1,\Gamma'_{\beta,L,h}(s_0))$. Since $\Gamma'_{\beta,L,h}(s_0)<0$ (by Lemma~\ref{lemma:ConvexityAuxiliaryLocallyLipschitz}), we have  that $v_1\geq 0,v_2\geq 0$, and (by normality)  
\begin{equation}\label{eq:AuxiliaryViabilityLipschitz}
0=v_1\beta s_0 \psi_{L,h}(i_0)-v_2\beta s_0\psi_{L,h}(i_0)+v_2\gamma i_0=-(v_2-v_1)\beta s_0 \psi_{L,h}(i_0)+v_2\gamma i_0,
\end{equation}
also proving that $v_2\geq v_1$. 
We note that, by Assumption~\ref{assum:SEcondAss}, we have $i_\phi(-h)\leq \min\{i_M,\,i_0+Lh\}= \psi_{L,h}(i_0)$. Considering any $b\in [\beta_\star, \beta]$ and using~\eqref{eq:AuxiliaryViabilityLipschitz}, we obtain
\[
\inp{v}{f(\phi,b)} =(v_2-v_1) b s_0i_\phi(-h)-v_2\gamma i_0\leq  (v_2-v_1) \beta s_0\psi_{L,h}(i_0)-v_2\gamma i_0=0,
\]
which proves \eqref{FUTB} also in this case.
\end{enumerate}
This concludes  the proof of  forward invariance of $\cA_{L,h}$.
The proofs of maximality of $A_{L,h}$ and assertion~\emph{(2)} follow by arguments similar to the ones provided in the proof of Theorem~\ref{thm:ViabilityDelay}, and are left to the reader. 
\end{proof}
Differently from the viability analysis performed in Theorem~\ref{thm:ViabilityDelay}, the regions defined in~Theorem~\ref{lemma:ViabilityDelayLipschitz} do depend on the delay parameter $h>0$. We will show that they converge, in a sense we are going to clarify, to the invariance/viability regions for the delay-free case, as $h\to 0$. To this aim, let us summarize here the main viability and invariance results obtained in~\cite[Section 2]{AvrFre22}  for the \emph{delay-free system} of ordinary differential equations
\begin{equation}\label{eq:DelayFreeCase}
\begin{cases}
s'(t)=-b(t)s(t)i(t),\\
i'(t)=b(t)s(t)i(t)-\gamma i(t).
\end{cases}
\end{equation}
\begin{theorem}[Theorem 2.3,~\cite{AvrFre22}]\label{lemma:ViabilityDelayFree}
The following propositions hold true for the delay-free system~\eqref{eq:DelayFreeCase}.
\begin{enumerate}
    \item
The maximal forward invariant set contained in $C$ (the all-control zone $\cA$ in~\cite{AvrFre22}) , is given by
\[
 A_0:=\{x=(s,i)\in C\;\vert\;s\leq \Gamma_{\beta, 0}(s)\},
\]
where 
\begin{equation}\label{eq:GammaBar}
\Gamma_{\beta, 0}(s)=\begin{cases}
i_M,\;\;\;\;&\text{if }0\leq s\leq \frac{\gamma}{\beta},\\
\frac{\gamma}{\beta}+i_M-s+\frac{\gamma}{\beta}\log(\frac{\beta}{\gamma}s),\;\;\;&\text{if }s\geq \frac{\gamma}{\beta}.
\end{cases}
\end{equation}
\item The maximal viable set contained in $C$ (the feasible set $\cB$ in~\cite{AvrFre22}), is given by
\[
 B_0:=\{x=(s,i)\in C\;\vert\;s\leq \Gamma_{\beta_\star, 0}(s)\},
\]
where 
\[
\Gamma_{\beta_\star, 0}(s)=\begin{cases}
i_M,\;\;\;\;&\text{if }0\leq s\leq \frac{\gamma}{\beta_\star},\\
\frac{\gamma}{\beta_\star}+i_M-s+\frac{\gamma}{\beta_\star}\log(\frac{\beta_\star}{\gamma}s),\;\;\;&\text{if } s\geq \frac{\gamma}{\beta_\star}.
\end{cases}
\]
    \end{enumerate}
\end{theorem}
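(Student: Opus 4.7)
The plan is to apply classical Nagumo-type viability theory to the ordinary differential inclusion associated with the delay-free system~\eqref{eq:DelayFreeCase}. Specifically, define $F_U^0:\R^2\toS\R^2$ by $F_U^0(x)=\{(-bsi,\,bsi-\gamma i):b\in[\beta_\star,\beta]\}$, which is upper semicontinuous with nonempty, convex, compact values. Since $h=0$, the space of feasible directions collapses to the ordinary Bouligand contingent cone, and Theorem~\ref{prop:ViabilityAubin} (or directly Nagumo's theorem) specializes to the statement: a closed set $K\subseteq T$ is forward invariant (resp.\ viable) for~\eqref{eq:DelayFreeCase} iff $F_U^0(x)\subseteq T_K(x)$ (resp.\ $F_U^0(x)\cap T_K(x)\neq\varnothing$) for every $x\in K$. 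The set $A_0$ has piecewise $C^1$ boundary described by~\eqref{Kgi}, so the cone $T_{A_0}$ is given by~\eqref{eq:COneGradient}.

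For part~\emph{(1)}, I first observe that $\Gamma_{\beta,0}$ in~\eqref{eq:GammaBar} is exactly the orbit of~\eqref{eq:DelayFreeCase} with $b\equiv\beta$ passing through the point $(\gamma/\beta,i_M)$; this is obtained by integrating $\frac{di}{ds}=-1+\frac{\gamma}{\beta s}$ from $\gamma/\beta$ to $s$, with initial value $i_M$. To check $F_U^0(x)\subseteq T_{A_0}(x)$ on $\partial A_0$, by continuity of $f$ it suffices to restrict to the relative interior of each smooth piece:
\begin{itemize}[leftmargin=*]
\item on $\{i=0\}$, $\{s=0\}$, and $\{s+i=1\}$, the computation is identical to cases~(1)--(2) in the proof of Theorem~\ref{thm:ViabilityDelay};
\item on $\{s\leq\gamma/\beta,\;i=i_M\}$, the outward normal $(0,1)$ gives $\langle(0,1),f\rangle=i_M(bs-\gamma)\leq i_M(b\gamma/\beta-\gamma)\leq 0$ for every $b\in[\beta_\star,\beta]$;
\item on the graph of $\Gamma_{\beta,0}$ with $s_0>\gamma/\beta$, any outward normal $v=(v_1,v_2)\neq 0$ satisfies $v_1,v_2\geq 0$ and the orthogonality relation $(v_2-v_1)\beta s_0\,\Gamma_{\beta,0}(s_0)=v_2\gamma\,\Gamma_{\beta,0}(s_0)$, so $\langle v,f(x,b)\rangle=(v_2-v_1)\,bs_0i_0-v_2\gamma i_0\leq 0$ for every $b\in[\beta_\star,\beta]$.
\end{itemize}
This proves forward invariance of $A_0$. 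For maximality, suppose $\widetilde A\supsetneq A_0$ with $\widetilde A\subseteq C$ were forward invariant, pick $x_0=(s_0,i_0)\in\widetilde A\setminus A_0$ with $s_0>\gamma/\beta$ and $i_0>\Gamma_{\beta,0}(s_0)$, and consider the solution corresponding to $b\equiv\beta$. By uniqueness of orbits of~\eqref{eq:DelayFreeCase}, this trajectory stays strictly above $\Gamma_{\beta,0}$ until it crosses the line $s=\gamma/\beta$, at which point $i>\Gamma_{\beta,0}(\gamma/\beta)=i_M$, contradicting $\widetilde A\subseteq C$. Note that, unlike the delayed case of Theorem~\ref{thm:ViabilityDelay}, no iterative concatenation in time units of $h$ is required here; this is the simplification afforded by $h=0$.

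Part~\emph{(2)} is entirely analogous. For viability I only need to exhibit one admissible $b$ for which $f(x,b)$ lies in $T_{B_0}(x)$ at boundary points; the natural choice is $b=\beta_\star$ on the graph of $\Gamma_{\beta_\star,0}$, where the same tangency computation shows that $b=\beta_\star$ makes the vector field tangent to the boundary curve, and on $\{i=i_M,\,s\leq\gamma/\beta_\star\}$ it yields a non-positive $i$-component. Maximality follows by the same time-reversal argument as in part~\emph{(1)}, with $\beta$ replaced by $\beta_\star$.

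The main obstacle is not technical but purely bookkeeping: one must handle the corner points where different smooth pieces of $\partial A_0$ (resp.\ $\partial B_0$) meet — for instance $(\gamma/\beta,i_M)$, where the horizontal segment joins the graph of $\Gamma_{\beta,0}$. At such corners $T_{A_0}$ is the intersection of the half-space tangent cones of the adjoining pieces; however, as already exploited in the proof of Theorem~\ref{thm:ViabilityDelay}, continuity of $f$ allows one to verify the tangency condition on the relative interior of each piece only, so the corners are covered automatically.
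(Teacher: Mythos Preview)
The paper does not provide its own proof of this statement: Theorem~\ref{lemma:ViabilityDelayFree} is simply quoted from~\cite[Theorem~2.3]{AvrFre22} as a known result for the delay-free system, so there is nothing in the present paper to compare your argument against. That said, your proof is correct and is precisely the $h=0$ specialization of the argument the authors give for Theorem~\ref{thm:ViabilityDelay}: the same boundary-by-boundary tangency check using~\eqref{eq:COneGradient}, the same maximality contradiction via the uncontrolled ($b\equiv\beta$ or $b\equiv\beta_\star$) trajectory, and the same appeal to continuity of $f$ to dispense with corner points. The only simplification, which you correctly identify, is that no iterated concatenation over delay windows is needed. One cosmetic remark: in your orthogonality relation on the graph of $\Gamma_{\beta,0}$ the common factor $\Gamma_{\beta,0}(s_0)$ is superfluous (the relation is simply $(v_2-v_1)\beta s_0=v_2\gamma$), but this does not affect the argument.
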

\begin{remark}
We have chosen to denote the maximal sets by $A_0$ and $B_0$ instead than $\cA$ and $\cB$ as in~\cite{AvrFre22} to stress the fact that these sets corresponds to the delay $h=0$.
\end{remark}

\begin{figure}[h!]
\centering
\includegraphics[width=\textwidth]{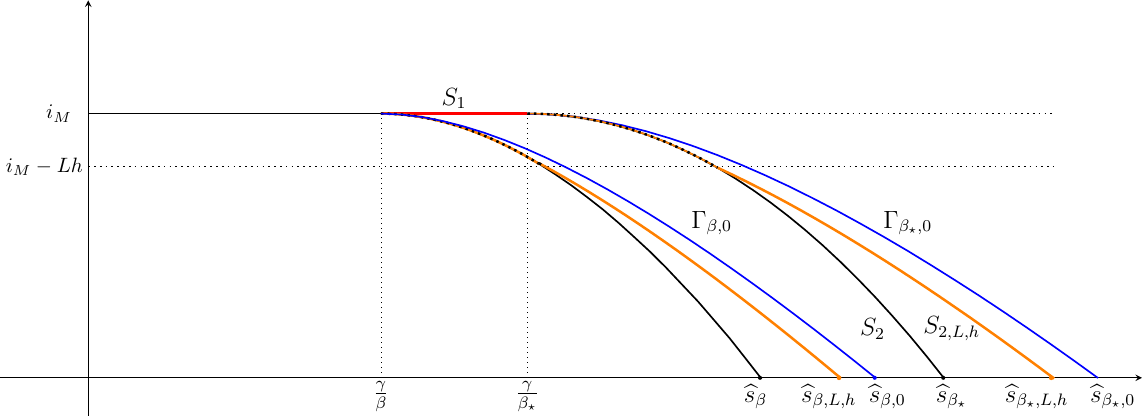}
        \caption{The figure shows  qualitative graphs of the  curves introduced in Theorem~\ref{lemma:ViabilityDelayLipschitz},~Lemma~\ref{lemma:ConvergenceSetsDelayFree} and Proposition~\ref{prop:GreedyLocallyLispchitz}. In orange we plot the graphs of the functions $\Gamma_{\beta,L,h}$ and $\Gamma_{\beta_\star,L,h}$. In black the curves $\Gamma_\beta$ and $\Gamma_{\beta_\star}$ corresponding to the case studied in Subsection~\ref{subsec:Arbitrary}, while in blue the curves $\Gamma_{\beta,0}$ and $\Gamma_{\beta_\star,0}$ corresponding to the (invariant and viable) sets in the delay-free case. }\label{Figure:SecondPlot}
\end{figure}

We now study and characterize the dependence of the sets $A_{L,h}$ and $B_{L,h}$ on the parameter $h>0$, 
and their relations with the sets $A,B$ defined in~Subsection~\ref{subsec:Arbitrary} for an arbitary continuous initial condition, and the sets $ A_0, B_0$ corresponding to the delay-free case. To these aims, we introduce the Cauchy problems
\begin{subequations}
\begin{equation}\label{eq:SystemS0}
(\cS_0):\;\;	\begin{cases}x'(t)=g(x(t)),\\
	x(0)=(\frac{\gamma}{\beta}, i_M),
	\end{cases}
 \end{equation}
 \begin{equation}\label{eq:SystemSLH}
	( \cS_{L,h}):\;\begin{cases}
	x'(t)=g_{L,h}(x(t)),
	\\
		x(0)=(\frac{\gamma}{\beta}, i_M),
\end{cases}
\end{equation}
\end{subequations}
with $x=(s,i)$, $g(x):=(\beta si, \, -\beta si+\gamma i)$ and $g_{L,h}(x):=(\beta s\psi_{L,h}(i), \, -\beta s\psi_{L,h}(i)+\gamma i)$.
We note that the system in 
\eqref{eq:SystemSLH} has been already considered in \eqref{eq:Non-LinearBeta}, and the function $\Gamma_{\beta,L,h}$,  introduced immediately after,   is the graph, in the $(s,i)$-plane, of the solution to  $\cS_{L,h}$.
Similarly, for $s\ge\frac{\gamma}{\beta}$, the function  $\Gamma_{\beta, 0}$ appearing in \eqref{eq:GammaBar} is the graph of the solution to the delay-free problem \eqref{eq:DelayFreeCase} with $b(t)=\beta$, as well as to its time-reversed version~$\cS_0$.

As a preliminary step, we study the asymptotic behaviour of the solution to $\cS_0$.

\begin{lemma}\label{lemma:BehaviourXbarra}
The Cauchy problem $\cS_0$ admits a unique solution 
$x(t)=(s(t),i(t))$, $t\in \R_+$. Moreover, it  satisfies the following properties:
\begin{enumerate}
\item $s$ and $i$ are strictly positive;
\item $s$ is strictly increasing, and $i$ is non increasing;
\item $\lim_{t\to +\infty} s(t)=:\widehat s_{\beta,0}<+\infty$ and $\lim_{t\to +\infty} i(t)=0$;
\item $\widehat s_{\beta,0}$  is equal to the unique element of $[\frac{\gamma}{\beta}, +\infty)$ such that $\Gamma_{\beta, 0}(\widehat s_{\beta,0})=0$.
\end{enumerate}
\end{lemma}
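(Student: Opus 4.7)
The plan is to treat $\cS_0$ as a planar ODE with smooth (indeed analytic) right-hand side and exploit the explicit conserved quantity that relates $i$ and $s$ along trajectories, namely $\Gamma_{\beta,0}$ itself.

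First I would establish existence, uniqueness and positivity. The field $g(s,i)=(\beta si,\, i(\gamma-\beta s))$ is locally Lipschitz on $\R^2$, so a unique local solution exists. From $s'=\beta s i$ and $i' = i(\gamma-\beta s)$ I obtain, wherever defined, the closed-form representations
\begin{equation*}
s(t) = \tfrac{\gamma}{\beta}\,\exp\!\Big(\beta\!\int_0^t i(\tau)\,d\tau\Big),
\qquad
i(t) = i_M\,\exp\!\Big(\!\int_0^t(\gamma-\beta s(\tau))\,d\tau\Big),
\end{equation*}
so both components remain strictly positive as long as the solution exists. This proves \emph{(1)} and also gives $s'=\beta si>0$, hence $s$ is strictly increasing. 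Since $s(0)=\gamma/\beta$ this yields $s(t)\ge \gamma/\beta$ for all $t\ge 0$, and therefore $i'(t)=i(t)(\gamma-\beta s(t))\le 0$, proving \emph{(2)}.

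Next I would recover the conservation law that identifies the trajectory with the graph of $\Gamma_{\beta,0}$. Dividing the two equations gives $\frac{di}{ds}=-1+\frac{\gamma}{\beta s}$ for $s>\gamma/\beta$, and integrating from $s=\gamma/\beta$ (where $i=i_M$) yields exactly $i(t)=\Gamma_{\beta,0}(s(t))$ with $\Gamma_{\beta,0}$ as in \eqref{eq:GammaBar}. A direct calculation shows $\Gamma_{\beta,0}'(s)=-1+\gamma/(\beta s)<0$ on $(\gamma/\beta,+\infty)$, $\Gamma_{\beta,0}(\gamma/\beta)=i_M>0$, and $\Gamma_{\beta,0}(s)\to -\infty$ as $s\to+\infty$, so $\Gamma_{\beta,0}$ has a unique zero $\widehat s_{\beta,0}\in(\gamma/\beta,+\infty)$; this is already \emph{(4)}, and it pins down $\widehat s_{\beta,0}$ independently of the dynamics.

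It remains to derive \emph{(3)}. Since $i(t)=\Gamma_{\beta,0}(s(t))>0$ forces $s(t)<\widehat s_{\beta,0}$, the monotone increasing function $s$ is bounded and hence admits a finite limit $s^*\le \widehat s_{\beta,0}$; by continuity $i(t)\to \Gamma_{\beta,0}(s^*)$. Global existence on $\R_+$ is now automatic because the solution stays in the bounded set $[\gamma/\beta,\widehat s_{\beta,0}]\times[0,i_M]$. To rule out $s^*<\widehat s_{\beta,0}$ I would argue by contradiction: in that case $\Gamma_{\beta,0}(s^*)>0$, so $s'(t)=\beta s(t)i(t)\to \beta s^*\Gamma_{\beta,0}(s^*)>0$, which contradicts $s(t)\to s^*<\infty$. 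Therefore $s^*=\widehat s_{\beta,0}$ and $i(t)\to \Gamma_{\beta,0}(\widehat s_{\beta,0})=0$, completing \emph{(3)}.

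The only mildly delicate step is the last one (ruling out a premature limit $s^*<\widehat s_{\beta,0}$); everything else is a direct consequence of separation of variables and monotonicity. I do not expect genuine obstacles, only the need to state the conserved-quantity computation carefully so that the identification $i(t)=\Gamma_{\beta,0}(s(t))$ is valid on the entire maximal interval.
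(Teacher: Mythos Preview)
Your proof is correct and follows essentially the same approach as the paper: both use the integral representations of $s$ and $i$ for strict positivity, the signs of $s'$ and $i'$ for monotonicity, the conserved quantity $i(t)=\Gamma_{\beta,0}(s(t))$, and a contradiction argument (your ``$s'\to\beta s^*\Gamma_{\beta,0}(s^*)>0$'' is the paper's ``$s'\ge\gamma\widehat\imath_{\beta,0}/2>0$'' in disguise). The only cosmetic differences are that you establish \emph{(4)} before \emph{(3)} and deduce global existence from boundedness of the trajectory, whereas the paper proves \emph{(3)} first and obtains global existence from a sublinear bound on the vector field.
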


\begin{proof}
Since $g$ is a smooth function, then it is locally Lipschitz and there exists a unique solution defined on a maximal interval $[0,\tau)$, $\tau>0$. 

\emph{(1)} By integration we have $s(t)=\frac{\gamma}{\beta} e^{\int_0^t\beta i(\xi)\,d\xi}>0$
and $i(t)=i_M e^{\int_0^t(-\beta s(\xi)+\gamma)\,d\xi }>0$,  for all $t\in [0,\tau)$.

\emph{(2)} Since $s'(t)=\beta s(t) i(t)>0$ for all $t\in [0,\tau)$, we have that $s$ is strictly increasing. Similarly, $i'(t)=-\beta s(t)i(t)+\gamma i(t)\leq -\beta \frac{\gamma}{\beta}i(t)+\gamma i(t)\leq 0$, proving that $i$ is non-increasing.

As a consequence, we have
$$
\big|g\big(s(t),i(t)\big)\big|\le \beta i_M|s(t)|+\gamma i_M
$$
for every $t\in[0,\tau)$, which implies that $\tau=\infty$ (otherwise, the solution could be extended on a right neighborhood of $\tau$) and the solution exists on $[0,+\infty)$ as claimed in the first part of the statement. 

\emph{(3)} By summing the equations, integrating in $[0,t]$ and substituting the function $i(t)$ under the integral with its expression obtained by the first equation, we get
\begin{equation}\label{eq:Toolexpression}
i(t)=\frac{\gamma}{\beta} +i_M-s(t)+\frac{\gamma}{\beta} \log(\frac{\beta}{\gamma}s(t))\quad\forall\,t\in\R_+.
\end{equation}
 The limits in \emph{(3)}  
 exist by monotonicity.
If, by contradiction, $\widehat s_{\beta,0}=+\infty$ then the previous equation would imply $\lim_{t\to +\infty} i(t)=-\infty$, leading to a contradiction with \emph{(1)}.
Assuming, by contradiction, that $\lim_{t\to +\infty}i(t)=:\widehat \imath_{\beta,0} >0$, we have that there exists $\bar t>0$ such that 
\[
s'(t)\geq \beta \frac{\gamma}{\beta}\frac{\widehat \imath_{\beta,0}}{2}=\frac{\gamma \widehat \imath_{\beta,0}}{2}>0,\;\;\forall t\geq \bar t,
\]
contradicting $\widehat s_{\beta,0}<+\infty$.

\emph{(4)} By taking the limit as $t\to +\infty$ in~\eqref{eq:Toolexpression}, we get
$
0=\frac{\gamma}{\beta} +i_M-\widehat s_{\beta,0}+\frac{\gamma}{\beta} \log(\frac{\beta}{\gamma}\widehat s_{\beta,0})$,
that is,  $\Gamma_{\beta, 0}(\widehat s_{\beta,0})=0$ (see~\eqref{eq:GammaBar}). The uniqueness simply follows by the strict monotonicity of $\Gamma_{\beta, 0}$ in the interval $[\frac{\gamma}{\beta},\infty)$.
\end{proof}

\begin{remark}
Also the Cauchy problem $\cS_{L,h}$ admits a unique solution 
$x_{L,h}(t)=(s_{L,h}(t),i_{L,h}(t))$, $t\in \R_+$. 
Moreover,  $s_{L,h}$ is strictly positive but (it is easy to prove that) $i_{L,h}$ must take also negative values. 
\end{remark}

We are now able to prove the aforementioned convergence result. 
\begin{theorem}\label{lemma:ConvergenceSetsDelayFree}
Let us consider $h>0$, $h'>0$, $L>0$ and the sets defined in Theorem~\ref{thm:ViabilityDelay}, Theorem~\ref{lemma:ViabilityDelayLipschitz} and Lemma~\ref{lemma:ViabilityDelayFree}. The following propositions hold.
\begin{enumerate}
\item If $Lh\geq i_M$, then $A=A_{L,h}$ and $B=B_{L,h}$.
\item $A\subseteq A_{L,h}$ and $B\subseteq B_{L,h}$.
\item  If $h\geq h'$, then $A_{L,h}\subseteq A_{L,h'}$ and $B_{L,h}\subseteq B_{L,h'}$.
\item $\overline{\bigcup_{h>0}A_{L,h}}=A_0$,
 and $\overline{\bigcup_{h> 0}B_{L,h}}=B_0$.
\end{enumerate}
\end{theorem}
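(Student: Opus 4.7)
\emph{Part (1).} The key observation is that when $Lh\ge i_M$ one has $i_M-Lh\le0$, so by the definition \eqref{tfLh} the truncation $\psi_{L,h}(i)=i_M$ for every $i\ge0$. Since the solution of $\cS_{L,h}$ issued from $(\gamma/\beta,i_M)$ satisfies $i(t)\ge0$ on $[0,T_{\beta,L,h}]$ (by the very definition of $T_{\beta,L,h}$), on that interval the ODE $\cS_{L,h}$ coincides with the linear system \eqref{Eq:ForwardInvSyst}. Hence $\Psi_{\beta,L,h}\equiv\Psi_\beta$, $\Gamma_{\beta,L,h}\equiv\Gamma_\beta$, $\widehat s_{\beta,L,h}=\widehat s_\beta$, and therefore $A_{L,h}=A$; replacing $\beta$ by $\beta_\star$ gives $B_{L,h}=B$.

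\emph{Parts (2) and (3), and the ``easy'' inclusion in (4).} I view each of the boundary curves as a solution of a scalar Cauchy problem in the variable $s$. Dividing the two equations of \eqref{eq:Non-LinearBeta} (and analogously for the linear system and for the delay--free system) one obtains that $\Gamma_\beta$, $\Gamma_{\beta,L,h}$, $\Gamma_{\beta,0}$ all solve a problem of the form
\[
\Gamma'(s)=-1+\frac{\gamma\,\Gamma(s)}{\beta\, s\,\Phi(\Gamma(s))},\qquad \Gamma\big(\tfrac{\gamma}{\beta}\big)=i_M,
\]
with $\Phi(i)=i_M$, $\Phi(i)=\psi_{L,h}(i)$, $\Phi(i)=i$ respectively. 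A direct check from \eqref{tfLh} yields, for $i\in[0,i_M]$ and $0<h'\le h$, the pointwise chain
\[
i\le\psi_{L,h'}(i)\le\psi_{L,h}(i)\le i_M,
\]
so the corresponding right--hand sides $F(s,i)=-1+\gamma i/(\beta s\Phi(i))$ are monotone in the reverse order (smaller $\Phi$ gives larger $F$). Since each $F$ is Lipschitz in $i$ on the region where $\Phi$ stays positive, the classical ODE comparison theorem delivers
\[
\Gamma_\beta(s)\le\Gamma_{\beta,L,h}(s)\le\Gamma_{\beta,L,h'}(s)\le\Gamma_{\beta,0}(s)
\]
on the common domain, together with the corresponding ordering $\widehat s_\beta\le\widehat s_{\beta,L,h}\le\widehat s_{\beta,L,h'}\le\widehat s_{\beta,0}$ obtained by tracing the curves down to $i=0$. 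These inequalities translate directly into $A\subseteq A_{L,h}\subseteq A_{L,h'}\subseteq A_0$, hence Parts~(2) and~(3) for the $A$--family. The same reasoning with $\beta_\star$ in place of $\beta$ gives the analogue for $B$. In particular $\bigcup_{h>0}A_{L,h}\subseteq A_0$, and since $A_0$ is closed this yields $\overline{\bigcup_{h>0}A_{L,h}}\subseteq A_0$, and similarly for $B_0$.

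\emph{Reverse inclusion in (4).} For the density statement I will use continuous dependence of ODE solutions on parameters. The estimate $|\psi_{L,h}(i)-i|\le Lh$ for $i\in[0,i_M]$ together with the uniform (in $h$) local Lipschitz bound on $g_{L,h}$ coming from \eqref{Eq:subLinearBoundSIR} implies that, as $h\to0^+$, the right--hand side of $\cS_{L,h}$ converges to that of $\cS_0$ uniformly on compact subsets of $\R^2$; Gronwall's lemma then yields that $\Psi_{\beta,L,h}\to\bar x$ uniformly on every compact $[0,T_0]$, where $\bar x=(\bar s,\bar i)$ is the solution of $\cS_0$. Now pick $(s^*,i^*)\in\mathrm{int}(A_0)$; it suffices to consider $s^*\in(\gamma/\beta,\widehat s_{\beta,0})$ and $i^*<\Gamma_{\beta,0}(s^*)$ since the remaining points of $A_0$ lie in $R\subseteq A_{L,h}$. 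By Lemma~\ref{lemma:BehaviourXbarra} there exists a unique $T>0$ with $\bar s(T)=s^*$, and $\bar i(T)=\Gamma_{\beta,0}(s^*)>i^*$. Choose $T_1>T$ with $\bar s(T_1)>s^*$ and $\bar i(T_1)>0$: by uniform convergence, for $h$ small $i_{L,h}(T_1)>0$ and $s_{L,h}(T_1)>s^*$, so $T_1<T_{\beta,L,h}$ and, by continuity and strict monotonicity of $s_{L,h}$ on $[0,T_{\beta,L,h}]$, a unique $T_h\in(0,T_1)$ satisfies $s_{L,h}(T_h)=s^*$, with $T_h\to T$. Then $\Gamma_{\beta,L,h}(s^*)=i_{L,h}(T_h)\to\bar i(T)>i^*$, so $(s^*,i^*)\in A_{L,h}$ for $h$ small. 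Thus $\mathrm{int}(A_0)\subseteq\bigcup_{h>0}A_{L,h}$, and since $A_0$ coincides with the closure of its interior (its boundary being a graph of a strictly decreasing concave function) the desired identity follows. The same plan with $\beta_\star$ in place of $\beta$ handles $B_0$.

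\emph{Anticipated main obstacle.} Parts (1)--(3) are essentially a clean application of ODE comparison; the delicate step is the reverse inclusion in (4), because the solution of $\cS_0$ approaches its terminal state $(\widehat s_{\beta,0},0)$ only as $t\to+\infty$, whereas $\cS_{L,h}$ crosses $i=0$ in finite time. The argument must therefore stop at a time $T_1<\infty$ strictly inside the maximal interval $(0,T_{\beta,L,h})$ and combine the strict monotonicity of $\bar s$ with the uniform convergence on $[0,T_1]$ to produce the approximating time $T_h$; this is precisely where care is needed to guarantee that $s^*$ actually lies in the domain of $\Gamma_{\beta,L,h}$ for all sufficiently small $h$.
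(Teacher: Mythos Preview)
Your proposal is correct and follows essentially the same route as the paper: Part~(1) via $\psi_{L,h}\equiv i_M$ on $[0,i_M]$ when $Lh\ge i_M$; Parts~(2)--(3) and the easy half of~(4) via scalar ODE comparison of the graph equations $i'(s)=-1+\gamma i/(\beta s\,\Phi(i))$ exploiting the chain $i\le\psi_{L,h'}(i)\le\psi_{L,h}(i)\le i_M$; and the reverse inclusion in~(4) via uniform-in-$h$ continuous dependence (Gronwall) of the time-parametrised solutions $\Psi_{\beta,L,h}$ on compact time intervals, followed by a hitting-time argument to recover $\Gamma_{\beta,L,h}(s^*)\to\Gamma_{\beta,0}(s^*)$.

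Two small remarks. First, your inference ``$i_{L,h}(T_1)>0$, so $T_1<T_{\beta,L,h}$'' needs positivity of $i_{L,h}$ on all of $[0,T_1]$, not just at $T_1$; this is immediate since $\bar\imath$ is bounded away from $0$ on $[0,T_1]$ (Lemma~\ref{lemma:BehaviourXbarra}) and the convergence is uniform there. Second, your direct argument that $T_h\to T$ (using strict monotonicity of $\bar s$ together with uniform convergence) is in fact a slight streamlining of the paper's proof, which instead extracts a convergent subsequence of the $\tau_h$ and identifies the limit a posteriori; your version avoids this detour and gives convergence along the full family $h\to0^+$.
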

\begin{proof}
Proposition~\emph{(1)} follows by the fact that, if $Lh\geq i_M$, then $\psi_{L,h}(i)=i_M$ for all $i\geq 0$ and thus the curves defining $A,A_{L,h}$ and $B,B_{L,h}$ coincide, i.e.,  $\Gamma_\beta\equiv\Gamma_{\beta,L,h}$ and $\Gamma_{\beta_\star}\equiv\Gamma_{\beta_\star,L,h}$ in their respective domains.

About~\emph{(2)}, we recall from Subsection~\ref{subsec:Arbitrary} that $\Gamma_\beta$ is the restriction to  $ [\frac{\gamma}{\beta}, \widehat s_{\beta}]$ of the solution to the scalar differential equation $i'(s)=-1+ \frac{\gamma i(s)}{\beta s i_M}$, with initial condition $i(\frac{\gamma}{\beta}) =i_M$, see~\eqref{eq:DefintionGamma}. Similarly,  by dividing the equations in~\eqref{eq:Non-LinearBeta}, $\Gamma_{\beta,L,h}$ turns out to be the solution to
\begin{equation}\label{eq:SolutionInTheMiddle}
i'(s)=-1+ \frac{\gamma i(s)}{\beta s \psi_{L,h}(i(s))}
\end{equation} with the same initial condition. Since $\psi_{L,h}(i)\leq i_M$ for all $i\in \R$, by the Comparison Lemma (see for example~\cite[Lemma 1.2]{Teschl12}) we have
\[
\Gamma_\beta(s)\leq \Gamma_{\beta,L,h}(s) \;\;\forall s\in [\frac{\gamma}{\beta}, \widehat s_{\beta}],
\]
proving that $A\subseteq A_{L,h}$. The proof of the inclusion $B\subseteq B_{L,h}$ follows the same argument. 

Proposition \emph{(3)} follows again by a similar comparison argument, noting that, if $h'\leq h$, then $\psi_{L,h'}(i)\leq \psi_{L,h}(i)$ for all $i\in \R$.

To prove~\emph{(4)},  we first show that $\overline{\bigcup_{h>0}A_{L,h}}\subseteq A_0$. Let us start by recalling that $\Gamma_{\beta, 0}(s)$, for  $s\in[\frac{\gamma}{\beta}, \widehat s_{\beta, 0}]$,  is
the graph of the solution to the delay-free problem \eqref{eq:DelayFreeCase} with $b\equiv\beta$. Hence,
it is the solution in $[\frac{\gamma}{\beta}, \widehat s_{\beta, 0}]$ to the scalar differential equation
\[
i'(s)=-1+\frac{\gamma}{\beta s}
\]
with initial condition $i(\frac{\gamma}{\beta})=i_M$. On the other hand, we have already seen that  $\Gamma_{\beta, L,h}$, in $[\frac{\gamma}{\beta}, \widehat s_{\beta,L,h}]$, is the solution to~\eqref{eq:SolutionInTheMiddle}
with the same initial condition $i(\frac{\gamma}{\beta})=i_M$. Since $\frac{i}{\psi_{L,h}(i)}\leq 1$ for all $i\in[0, i_M]$ and all $h\geq 0$, we have, again by a comparison argument, that $\Gamma_{\beta, L,h}(s)\leq\Gamma_{\beta,0}(s)$ for all $h>0$ and for all $s\in [\frac{\gamma}{\beta},\widehat s_{\beta,L,h}]$. This in particular yields $A_{L,h}\subset A_0$ for all $h>0$, implying $\overline{\bigcup_{h>0}A_{L,h}}\subseteq A_0$.

Since $A_0=\overline{\text{int}(A_0)}$,  to prove the converse inclusion it is enough to show that \begin{equation}\label{intA0s}
\text{int}(A_0)\subseteq \overline{\bigcup_{h>0}A_{L,h}}.
\end{equation}
To this aim, it is useful to recall that, as $s\ge\frac{\gamma}{\beta}$,  the curves $\Gamma_{\beta, 0}$ and $\Gamma_{\beta,L,h}$, defining the (boundaries of) the sets $A_0$ and $A_{L,h}$, are the graphs, in the $(s,i)$-plane, of the solutions to the problems $(\cS_0)$ and $(\cS_{L,h})$, respectively.
Thus, as a preliminary step, we prove that the solutions to~$(\cS_{L,h})$ defined in~\eqref{eq:SystemSLH} pointwise converge, as $h \to 0$, to the solution to~$(\cS_0)$ defined in~\eqref{eq:SystemS0}, by verifying the hypotheses of Lemma~\ref{lemma:AppendixGronwall}. Let us denote by $\bar x=(\bar s,\bar \imath):\R_+\to \R^2$ the solution to $(\cS_0)$  and by $x_{L,h}=(s_{L,h},i_{L,h}):\R_+\to \R^2$ the solution to $(\cS_{L,h})$.
It can be proved that the vector fields $g_{L,h}:\R^2\to \R^2$ are uniformly locally Lipschitz, i.e., for any compact set $K\subset \R^n$ there exists a $M_K\geq 0$ such that
\begin{equation}\label{eq:EquicontinuityGlh}
	|g_{L,h}(x_1)-g_{L,h}(x_2)|_{\max}\leq M_K|x_1-x_2|_{\max},\;\;\forall h>0,\;\forall x_1,x_2\in K,
\end{equation}
by using the inequality $|\psi_{L,h}(i_2)-\psi_{L,h}(i_1)|\leq \min\{2i_M, |i_2-i_1|\}$  for all $i_1,i_2\in \R$ and all $h>0$.
By Lemma~\ref{lemma:BehaviourXbarra}, there exists a compact set $Q\subset [\frac{\gamma}{\beta}, +\infty)\times [0,i_M]$ such that that $\bar x(t)\in Q$, for all $t\in \R_+$. 
We also note that $\psi_{L,h}$ converges uniformly to the identity $\text{Id}(i)=i$ in $[0,i_M]$ as $h$ goes to $0$. 
Thus, $g_{L,h}$ is converging uniformly to $g$ in $Q$. Recalling the sublinear bound in~\eqref{Eq:subLinearBoundSIR}, all the hypotheses of Lemma~\ref{lemma:AppendixGronwall} are verified. We can thus conclude that $\displaystyle \lim_{h\to 0}x_{L,h}(t)=\bar x(t)$, for all $t\in \R_+$.

Let us now consider any point $z=(s_z,i_z)\in \text{int}(A_0)$. If  $z\in A$, then \eqref{intA0s}  trivially follows by point \emph{(2)}. Let us thus suppose that $z\in \text{int}(A_0)\setminus A$, i.e.,  $\frac{\gamma}{\beta}<s_z<\widehat s_{\beta,0}$ and $\,\max\{0,\Gamma_\beta(s_z)\}<i_z<\Gamma_{\beta, 0}(s_z)$.
By Lemma~\ref{lemma:BehaviourXbarra}, since $s_z<\widehat s_{\beta,0}$ there exists $T_z>0$ such that $\bar s(T_z)>s_z$. By Lemma~\ref{lemma:BehaviourXbarra} we also have $\bar \imath(T_z)>0$. By pointwise convergence of $x_{L,h}$ to $\bar x$, there exists $\overline h>0$ small enough such that $s_{L,h}(T_z)>s_z$ for all $h\leq \overline h$. Thus, by continuity of $s_{L,h}$,  for any $h\leq \overline h$ we can choose $\tau_h\in(0,T_z)$  such that $s_{L,h}(\tau_h)=s_z$.
In $(0, \overline h)$, the function 
$
h\mapsto \tau_h
$
is bounded from below by $0$ and bounded from above by $T_z$, and  thus there exists a sequence $h_n\to 0^+$ such that $\lim\limits_{n\to +\infty}\tau_{h_n}=\bar \tau$ for some $\bar \tau \in [0,T_z]$.
By Lemma~\ref{lemma:AppendixGronwall} we have that, up to a subsequence,  $x_{L,h_n}\to \overline x$ uniformly in $[0,T_z]$, as $n\to +\infty$. We thus have:
\[
\lim_{n \to \infty}\Gamma_{\beta, L,h_n}(s_z)=
\lim_{n \to \infty}\Gamma_{\beta, L,h_n}(s_{ L,h_n}(\tau_{h_n}))=
\lim_{n\to +\infty}i_{ L,h_n}(\tau_{h_n})= \wi(\bar \tau)=\Gamma_{\beta,0}(s_z).
\]

Since $i_z< \Gamma_{\beta,0}(s_z)$ this implies that there exists  $h_z>0$ such that 
\[
i_z\leq \Gamma_{\beta, L,h_z}(s_z)< \Gamma_{\beta,0}(s_z),
\]
implying that $z\in A_{L,h_z}\subset \overline{\bigcup_{h>0}A_{L,h}}$.
We have thus proved that $\text{int}(A_0)\subseteq \overline{\bigcup_{h>0}A_{L,h}}$, as claimed.

The argument for $B_{L,h}$ and $B_0$ is similar.
 \end{proof}

In Figure~\ref{Figure:SecondPlot} the results of Lemma~\ref{lemma:ConvergenceSetsDelayFree} can be visualized in a particular case.

As in the case studied in Subsection~\ref{subsec:Arbitrary}, Theorem~\ref{lemma:ViabilityDelayLipschitz} provides a tool in designing a feedback control policy  (the so-called \emph{greedy control strategy}).
To be concise, we develop here only the technical details and send back 
the unaccustomed reader to  Subsection~\ref{subsec:Arbitrary} for an introduction to the greedy control strategy.

Given the set of functions
\[
\cB_{L,h}:=\big\{\phi\in \cC_{L,|\cdot|_{\max}}\;\vert\;\phi(0)\in B_{L,h},\;\;\phi(s)\in C\;\,\forall \,s\in [-h,0]\big\},
\] we  consider, for every $\phi\in\cB_{L,h}$, the {greedy control strategy} 
\begin{equation}\label{eq:GreedyAsMinimizing1}
\bt_{L,h}(\phi):=\arg\min_{b\in U_{\cB_{L,h}}(\phi)}G(\beta-b)=\max U_{\cB_{L,h}}(\phi),
\end{equation}
where 
$$
U_{\cB_{L,h}}(\phi):=\{b\in [\beta_\star, \beta]\;\vert\;f(\phi,b)\in D_{\cB_{L,h}}(\phi)\}. 
$$
By computing the maximum in~\eqref{eq:GreedyAsMinimizing1}, as done in Lemma~\ref{lemma:EquivalentDefinitionControl} for the continuous initial conditions case, one obtain the following explicit expression 
\begin{equation}\label{eq:ControlPolicy2}
\bt_{L,h}(\phi)=\begin{cases}
\beta,\;\;\;&\text{if }\phi(0)\in B_{L,h}\setminus (S_1\cup S_{2,L,h}),\\
\min\big \{\beta,\frac{\gamma i_M}{s_\phi(0)i_\phi(-h)}\big \},\;\;\;&\text{if }\phi(0)\in S_1,\\
\min \big\{\beta,\beta_\star\frac{\psi_{L,h}(i_\phi(0))}{i_\phi(-h)} \big\},\;\;\;&\text{if }\phi(0)\in S_{2,L,h},
\end{cases} 
\end{equation}
with the convention $1/0=+\infty$. In the expression above, the set $S_1$
is the same defined in Lemma \ref{lemma:EquivalentDefinitionControl}, while
\[
\begin{aligned}
S_{2,L,h}:=\big\{(s,i)\in T\;\;\vert\;\;s\in [\frac{\gamma}{\beta_\star}, \widehat s_{\beta_\star,L,h}] \text{ and } i =\Gamma_{\beta_\star,L,h}(s)\big\},
\end{aligned}
\]
where   $\Gamma_{\beta_\star,L,h}:[\frac{\gamma}{\beta_\star},\widehat s_{\beta_\star,L,h}]\to [0,i_M]$ represents the solution of \eqref{eq:Non-LinearBetaStar} in the plane $(s,i)$.

The properties of the greedy control policy and  the resulting controlled (\emph{closed-loop}) solutions are summarized in the following statement.

\begin{theorem}[greedy  control policy for Lipschitz continuous initial conditions]\label{prop:GreedyLocallyLispchitz} 
Given the function $F_{L,h}:\cB_{L,h}\to \R^2$ defined  by
$F_{L,h}(\phi)=f(\phi, \bt_{L,h}(\phi))$ (see~\eqref{eq:DefnDelSistema}), there exists a solution $x_\phi:[-h,+\infty)\to \R^2$ to the Cauchy problem
\begin{equation}\label{eq:ClosedLoopLipschitz}
\begin{cases}
x'(t)=F_{L,h}(S(t)x),\\
S(0)x=\phi\in \cB_{L,h}.
\end{cases}
\end{equation} 
Given any $\phi\in \cB_{L,h}$, we have that
\begin{enumerate}
\item $x_\phi(t)\in B_{L,h}$ for all $t\in \R_+$. 
\item   Defining $\bt_{L,h,\phi}:\R_+\to [\beta_\star,\beta]$ by
\begin{equation}\label{eq:ControlPolicyOpenLoopLipschitz}
\bt_{L,h,\phi}(t):=\bt_{L,h}(S(t)x_\phi),
\end{equation}
it holds that 
\begin{enumerate}
\item $x_\phi$ is the unique solution to the Cauchy problem  \eqref{eq:System}-\eqref{icphi} with $b=\bt_{L,h,\phi}$;
\item 
$\bt_{L,h,\phi}$ is eventually constant equal to $\beta$;
\item  the corresponding cost~\eqref{eq:CostFunctionsal} is finite.
\end{enumerate}
\end{enumerate}
Moreover, if $\phi\in \cB_{L,h}\cap \cT_0$ (i.e. avoiding the trivial case of $i_\phi$ identically zero in $[-h,0]$), then
\begin{enumerate}
\setcounter{enumi}{2}
\item there exists a $T=T(\phi)\geq0$ such that $x_\phi(t)\in R$ for all $t\geq T$.
\end{enumerate}
\end{theorem}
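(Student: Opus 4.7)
The plan is to mirror the proof of Theorem~\ref{thm:GreedyControlPolicy} almost verbatim, systematically substituting $\cB \to \cB_{L,h}$, $B \to B_{L,h}$, $\bt \to \bt_{L,h}$, $S_2 \to S_{2,L,h}$, and using Theorem~\ref{lemma:ViabilityDelayLipschitz} in place of Theorem~\ref{thm:ViabilityDelay}. The first step is the analog of~\eqref{eq:EqualityLemmaForInv}: since $\bt_{L,h}(\phi) = \max U_{\cB_{L,h}}(\phi) \in U_{\cB_{L,h}}(\phi)$ for every $\phi \in \cB_{L,h}$, one has by definition $F_{L,h}(\phi) = f(\phi, \bt_{L,h}(\phi)) \in D_{\cB_{L,h}}(\phi)$. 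From here, existence of a solution $x_\phi$ to the closed-loop Cauchy problem~\eqref{eq:ClosedLoopLipschitz}, together with the property $S(t)x_\phi \in \cB_{L,h}$ for all $t \geq 0$, is obtained by invoking the same appendix result (Lemma~\ref{lemma:WellPosednessSoltuions}) used in the proof of Theorem~\ref{thm:GreedyControlPolicy}; the viability of $\cB_{L,h}$ guaranteed by Theorem~\ref{lemma:ViabilityDelayLipschitz}(2) furnishes exactly the hypotheses needed. Evaluating at $s = 0$ gives point~\emph{(1)}: $x_\phi(t) \in B_{L,h}$ for all $t \in \R_+$.

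A point that needs to be checked here, and which is peculiar to the Lipschitz setting, is that $S(t)x_\phi$ lies in $\cC_{L,|\cdot|_{\max}}$, as required by the definition of $\cB_{L,h}$. This is a direct consequence of the uniform bound~\eqref{eq:InequalityLipschitzConstant}: for any admissible $b$, $|f(\phi,b)|_{\max} \leq L$, so $x_\phi$ is $L$-Lipschitz on $[0,\infty)$; since $\phi$ is $L$-Lipschitz on $[-h,0]$ by assumption, the concatenation is $L$-Lipschitz on $[-h, \infty)$, hence every window $S(t)x_\phi$ belongs to $\cC_{L,|\cdot|_{\max}}$.

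For~\emph{(2)(a)}, I would simply expand $F_{L,h}(S(t)x_\phi) = f(S(t)x_\phi, \bt_{L,h,\phi}(t)) = \big(-\bt_{L,h,\phi}(t)\, s_\phi(t) i_\phi(t-h),\ \bt_{L,h,\phi}(t)\, s_\phi(t) i_\phi(t-h) - \gamma i_\phi(t)\big)$, so that $x_\phi$ solves~\eqref{eq:System}-\eqref{icphi} with control $b = \bt_{L,h,\phi}$; uniqueness follows from Lemma~\ref{lemma:Well-PosednessLemma}. For~\emph{(2)(b)}, I would apply Lemma~\ref{lemma:COnvergence} to obtain $\lim_{t\to\infty} i_\phi(t) = 0$ and $\lim_{t\to\infty} s_\phi(t) = s_{\phi,\infty} \in [0,\gamma/\beta_\star)$. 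Combined with the explicit description of $S_1$ and $S_{2,L,h}$ (both contained in $\{i = i_M\} \cup \{i = \Gamma_{\beta_\star,L,h}(s)\}$, with $s \geq \gamma/\beta_\star$), this forces $x_\phi(t) \notin S_1 \cup S_{2,L,h}$ for all $t$ beyond some threshold $T_1(\phi)$. Then formula~\eqref{eq:ControlPolicy2} gives $\bt_{L,h,\phi}(t) = \beta$ on $[T_1(\phi),\infty)$. Property~\emph{(2)(c)} is then immediate since $G(0) = 0$ and hence $\int_0^\infty G(\beta - \bt_{L,h,\phi}) \leq T_1(\phi) G(\beta - \beta_\star) < \infty$.

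Finally,~\emph{(3)} follows by applying part~\emph{(2)} of Lemma~\ref{lemma:COnvergence}: since $\bt_{L,h,\phi}$ is eventually essentially constant equal to $\beta$ and $i_\phi \not\equiv 0$ on $[-h,0]$ (as $\phi \in \cT_0$), one concludes $s_{\phi,\infty} < \gamma/\beta$; by monotonicity of $s_\phi$, there exists $T(\phi) \geq 0$ such that $s_\phi(t) \leq \gamma/\beta$ for all $t \geq T(\phi)$, i.e., $x_\phi(t) \in R$. The main obstacle I anticipate is the existence question in~\eqref{eq:ClosedLoopLipschitz}: the greedy map $\bt_{L,h}$ is generally discontinuous across $S_1 \cup S_{2,L,h}$, so existence cannot come from classical Lipschitz theory and must be imported from the appendix lemma, whose own proof relies on viability-theoretic selection (essentially Theorem~\ref{prop:ViabilityAubin}). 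All remaining steps are formal consequences of the asymptotic results already established for arbitrary admissible controls.
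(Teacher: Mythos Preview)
Your proposal is correct and follows essentially the same route as the paper: establish $F_{L,h}(\phi)\in D_{\cB_{L,h}}(\phi)$ via $\bt_{L,h}(\phi)\in U_{\cB_{L,h}}(\phi)$, invoke the appendix lemma (Lemma~\ref{lemma:WellPosednessSoltuions}) for existence and viability of the closed loop, and then rerun the asymptotic arguments of Theorem~\ref{thm:GreedyControlPolicy} via Lemma~\ref{lemma:COnvergence}. Your explicit check that $S(t)x_\phi\in \cC_{L,|\cdot|_{\max}}$ through~\eqref{eq:InequalityLipschitzConstant} is a useful detail the paper leaves implicit in the phrase ``necessary notational adaptation''.
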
 
\begin{remark}\label{gcsc1}
The control policy $\bt_{L,h,\phi}:\R_+\to [\beta_\star,\beta]$ in~\eqref{eq:ControlPolicyOpenLoopLipschitz} can be rewritten as follows:
\begin{equation}\label{eq:ControlPolicyExplicitFeedback1}
\bt_{L,h,\phi}(t)=\begin{cases}
\beta,\;\;\;&\text{if }(s_\phi(t), i_\phi(t))\in B_{L,h}\setminus (S_1\cup S_{2,L,h}),\\
\min\big\{\beta,\frac{\gamma i_M}{s_\phi(t)i_\phi(t-h)}\big \},\;\;\;&\text{if }(s_\phi(t), i_\phi(t))\in S_1,\\
\min \big\{\beta,\;\beta_\star\frac{\psi_{L,h}(i_\phi(t))}{i_\phi(t-h)}\big\},\;\;\;&\text{if }(s_\phi(t), i_\phi(t))\in S_2.
\end{cases} 
\end{equation} 
\end{remark}

\begin{proof}  
The proof relies on arguments similar to the ones of~Theorem~\ref{thm:GreedyControlPolicy}, using the viability analysis performed in Theorem~\ref{lemma:ViabilityDelayLipschitz}. 
A preliminary step to the proof of existence of a solution to the Cauchy problem \eqref{eq:ClosedLoopLipschitz}, consists in noting  that 
\begin{equation}\label{eq:EqualityLemmaForInvFLh}
 F_{L,h}(\phi)=f(\phi,\bt_{L,h}(\phi))
 \in \cD_{\cB_{L,h}}(\phi)=T_{B_{L,h}}(\phi(0))\cap B_{|\cdot|_{\max}}(0,L)\;\;\;\;\forall\,\phi \in \cB_{L,h}.
 \end{equation}
 This is a direct consequence of the fact that $\bt_{L,h}(\phi)\in U_{\cB_{L,h}}(\phi)$ for every $\phi\in \cB_{L,h}$ which implies~\eqref{eq:EqualityLemmaForInvFLh} by definition of $U_{\cB_{L,h}}(\phi)$.

Under this condition, the  existence of solutions $x_\phi$ to~\eqref{eq:ClosedLoopLipschitz} is proved as  in Lemma~\ref{lemma:WellPosednessSoltuions} in Appendix, together with the property  $S(t)x_\phi\in \cB_{L,h}$, for all $t\in \R_+$, which implies $S(t)x_\phi(0)=x_\phi(t)\in B_{L,h}$, for all $t\in \R_+$. 
 Then, part \emph{(1)} of the statement is proved.

The proof of the remaining part of the statement proceeds exactly as done in the proof of Theorem~\ref{thm:GreedyControlPolicy}, just under  the necessary notational adaptation.  
\end{proof}

A direct consequence of Proposition~\ref{prop:GreedyLocallyLispchitz} in terms of the optimal control problem $\cP_\phi$ defined in Problem~\ref{prob:OptimalControlPb} is stated below.

\begin{corollary}
Given any $h>0$ and any $L>0$, the optimal control problem $\cP_\phi$  admits a solution with a finite cost, for any initial condition $\phi\in \cB_{L,h}$.
\end{corollary}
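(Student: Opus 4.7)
The plan is to reduce the statement to a direct combination of two results already proved in the Lipschitz-initial-condition branch of the paper, mirroring exactly the argument used for the analogous corollary in the continuous-initial-condition setting of Subsection~\ref{subsec:Arbitrary}. I do not expect any substantive obstacle here: the corollary is a clean packaging of Theorem~\ref{thm:ExistenceOptimalControl} and Proposition~\ref{prop:GreedyLocallyLispchitz}, and the main work — proving existence by the direct method and constructing an admissible feedback by viability — has already been carried out.

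First, I would observe that $\cB_{L,h}\subseteq\cC$, so Theorem~\ref{thm:ExistenceOptimalControl} applies to the given $\phi$ and guarantees that the problem $\cP_\phi$ admits a minimizer. What is not yet ensured by that theorem alone is that the optimal value is finite; since the integrand $G$ is non-negative and the infimum is attained, it is enough to exhibit a single admissible control for $\cP_\phi$ whose associated cost is finite.

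Next, I would take as such a witness the greedy feedback control $\bt_{L,h,\phi}:\R_+\to[\beta_\star,\beta]$ defined in~\eqref{eq:ControlPolicyOpenLoopLipschitz}. By assertion~(1) of Proposition~\ref{prop:GreedyLocallyLispchitz}, the corresponding closed-loop trajectory $x_\phi$ satisfies $x_\phi(t)\in B_{L,h}\subseteq C$ for every $t\ge0$, and in particular $i_\phi(t)\le i_M$, so that the ICU constraint~\eqref{ICU} is fulfilled and $\bt_{L,h,\phi}$ is admissible. Assertion~(2)(a) confirms that $x_\phi$ is indeed the unique state trajectory of~\eqref{eq:System}--\eqref{icphi} associated with the control $\bt_{L,h,\phi}$, while assertion~(2)(c) yields $J(\beta-\bt_{L,h,\phi})=\int_0^{+\infty}G(\beta-\bt_{L,h,\phi}(t))\,dt<+\infty$.

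Combining these observations, the infimum in $\cP_\phi$ is bounded above by this finite value, hence it is finite, and by Theorem~\ref{thm:ExistenceOptimalControl} it is attained. This establishes the corollary. As already noted, no step presents any real difficulty; the only point deserving care is to make the implicit logical link explicit, namely that the existence result alone does not preclude an infinite optimal value, so one must quote Proposition~\ref{prop:GreedyLocallyLispchitz}(2)(c) to certify finiteness.
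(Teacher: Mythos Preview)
Your proposal is correct and follows essentially the same approach as the paper: the corollary is obtained by combining Theorem~\ref{thm:ExistenceOptimalControl} (existence of a minimizer) with the finite-cost assertion of Proposition~\ref{prop:GreedyLocallyLispchitz}, exactly as in the analogous corollary of Subsection~\ref{subsec:Arbitrary}. Your write-up is in fact more explicit than the paper's one-line justification, since you spell out why finiteness of the optimal value requires exhibiting an admissible control with finite cost.
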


\begin{figure}[h!]
\begin{center}
\includegraphics[width=0.49\textwidth]{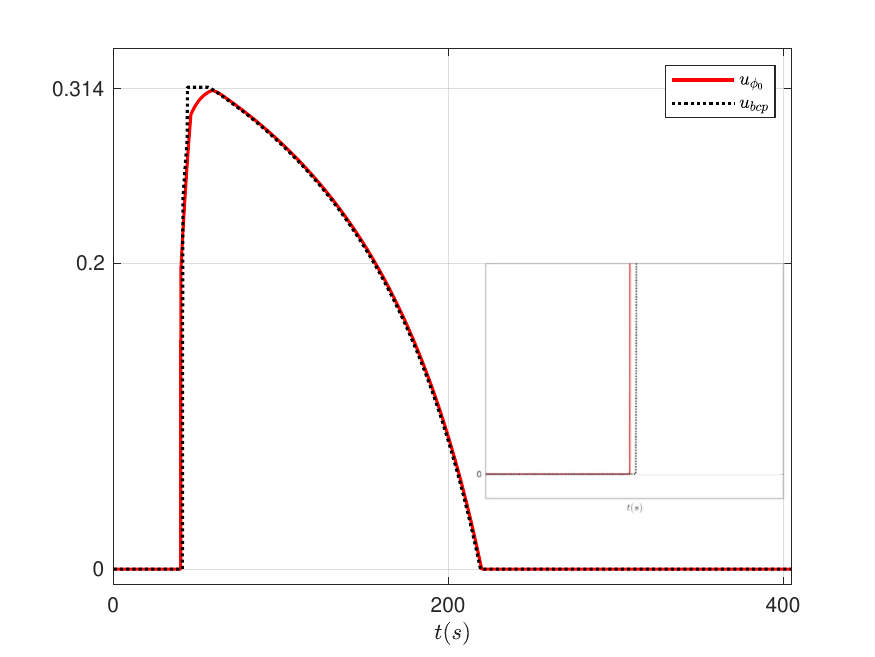}
\includegraphics[width=0.49\textwidth]{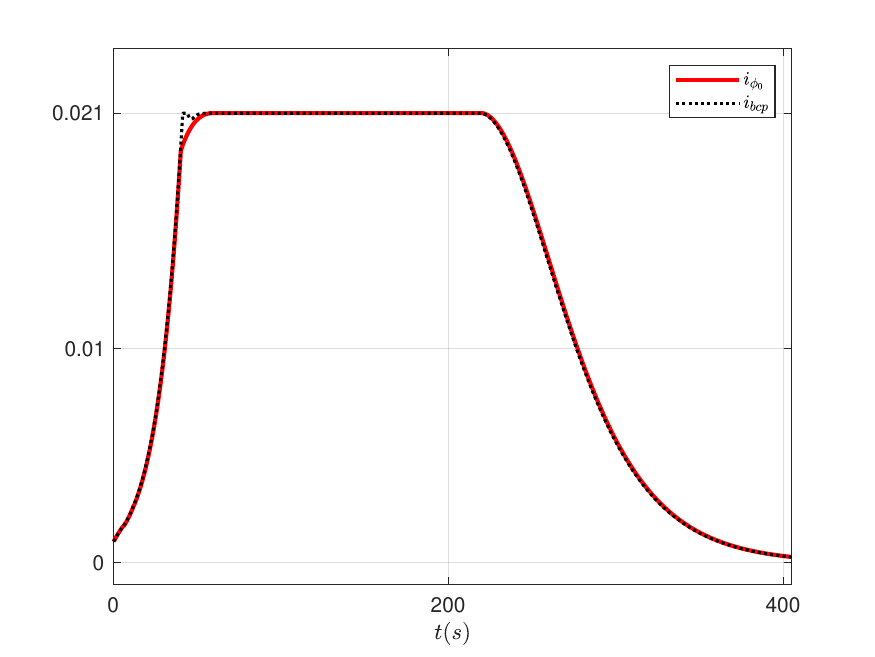}
\includegraphics[width=0.49\textwidth]{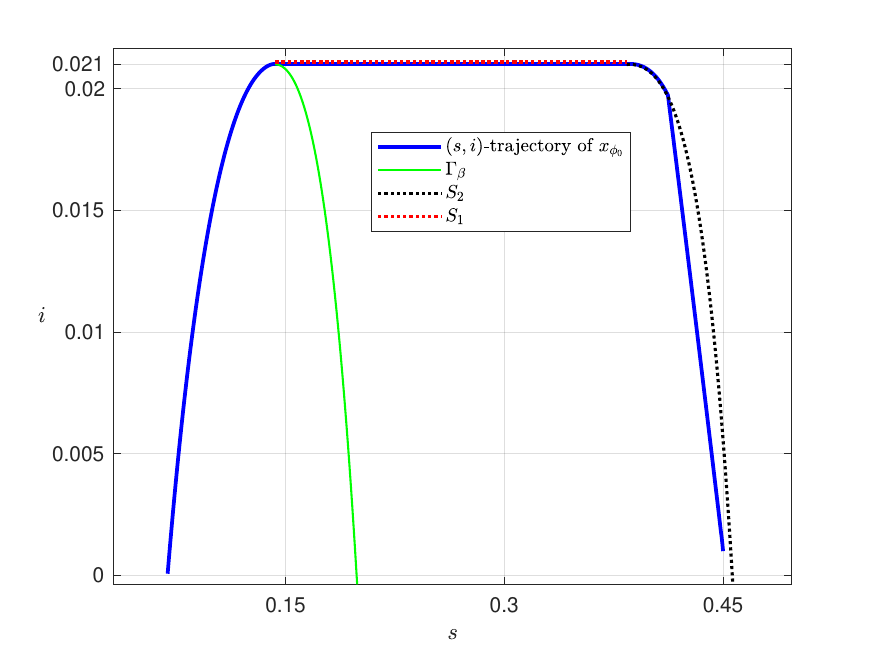}
\includegraphics[width=0.49\textwidth]{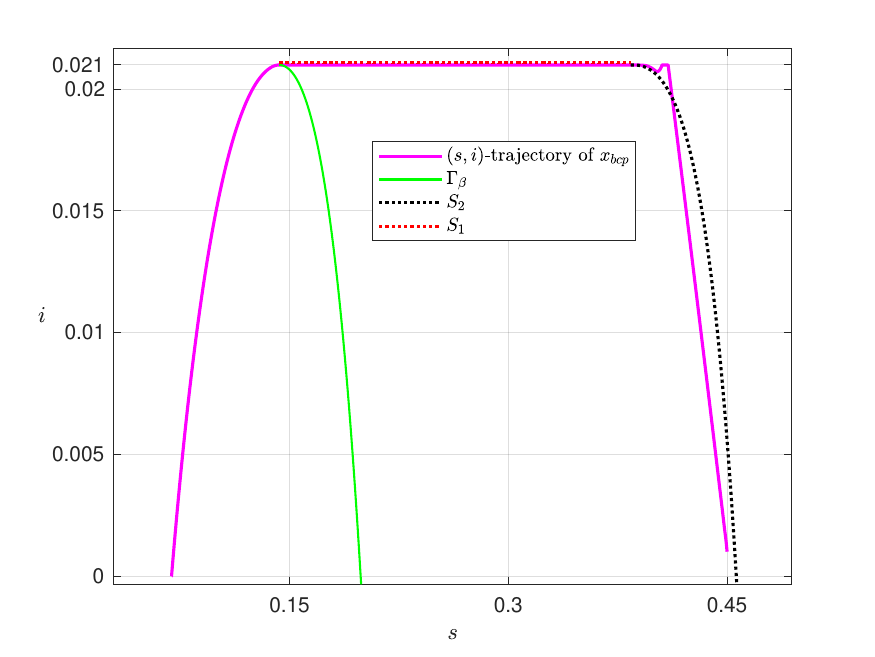}


\end{center}
\caption{Solutions for the constant initial condition $\phi_0(t)\equiv x_0=(0.45, 0.001)$. The dotted black lines in the pictures above represent  the optimal control $u_{{bcp}}(t)$ and the value of $i_{{bcp}}(t)$,  respectively, as computed by \textsc{Bocop}. The red lines represent the greedy control $u_{\phi_0}(t)=\beta-b_{\phi_0}(t)$ described in Theorem~\ref{thm:GreedyControlPolicy}, and the $i$-component of the solution, respectively, computed with \textsc{Matlab}.
In the pictures below, on the left, we plotted the trajectory of the solution $x_{\phi_0}$, corresponding to the greedy control $u_{\phi_0}$, with the curves $S_1$ and $S_2$ in Theorem~\ref{thm:GreedyControlPolicy} plotted in dotted red and black lines, respectively, together with the curve $\Gamma_\beta$, representing the boundary of the set $A$ defined in~Theorem~\ref{thm:ViabilityDelay}, plotted in green. On the right, the same plot for the solution $x_{{bcp}}$ corresponding to the control $u_{{bcp}}$.}
\label{i_greed}
\end{figure}

\section{Examples and numerical simulations}\label{Sec:Simul}

In the sequel, with the aid of numerical examples, we illustrate the greedy control schemes introduced in the previous section (Theorem~\ref{thm:GreedyControlPolicy} and Theorem~\ref{prop:GreedyLocallyLispchitz}).

In all  examples, the epidemic parameters are chosen  according to  the state of knowledge of  the COVID-19 epidemic in Italy in autumn 2021 (see \cite{FG2023}, Example 4.3). Consequently,  we specialize~\eqref{eq:System} by choosing
\[
\gamma=0.0714,\;\; \beta=0.5,\;\;\beta_\star=0.185,\;\; i_M=0.021. 
\] The reader interested in understanding how such  parameters have been chosen is refereed to \cite{FG2023}, Example~4.3. An appropriate latency period would be between $4$ and $6$ (\cite{Lauer_etal_2020,TaoMa2022,Wu_etal2022}). We consider here a delay $h=6$, which is large enough to appreciate the effect of the delay in numerical computations. 

\begin{example}[constant initial condition]\label{example:Numerical}\em 

We start by considering a constant initial condition $\phi_0(t)\equiv x_0:=(s_0,i_0):=(0.45,0.001)$ for all $t\in [-h,0]$. This means that,  at time $0$, we have a small fraction of infected population, and almost half of the remaining individuals are susceptible. 
Moreover, this constant initial condition  case models an isolated-in-time spreading event at time $-h$;  then, nothing happens ($s$ and $i$ stay constant) till time $0$ after which the infection can be transmitted.

Since $h\max\{\beta,\gamma\}\geq 1$, for every $L>0$ satisfying~\eqref{eq:BoundLipschitzConstant} we have $Lh\geq i_M$. 
By Theorem~\ref{lemma:ConvergenceSetsDelayFree} \emph{(1)}, this implies that the control strategy  described in Subsection~\ref{Subsec:locallyLispchitz} is equivalent to the one introduced in~Subsection~\ref{subsec:Arbitrary}. Roughly speaking, the delay parameter $h=6$ is large enough to prevent any gain in imposing the Lipschitz condition in Assumption~\ref{assum:SEcondAss}. We thus consider the ``greedy'' control policy $\bt_{\phi_0}$ described in Theorem~\ref{thm:GreedyControlPolicy}.
This is possible, since it can be verified that
$\phi_0$ belongs to viable set $\cB$ (see Theorem~\ref{thm:ViabilityDelay}).

The numerical computation of the greedy control $u_{\phi_0}(t):=\beta-\bt_{\phi_0}(t)$ and the corresponding solution 
$x_{\phi_0}=(s_{\phi_0},i_{\phi_0})$
are  performed in \textsc{Matlab} and 
plotted in red in Figure~\ref{i_greed}.
In the same figure, we compare the performance of this control strategy with the ``optimal" one numerically obtained by the algorithmic optimal control toolbox \textsc{Bocop} (\cite{Bocop}), and denoted by $u_{bcp}$. 
In the most recent versions,   \textsc{Bocop}  handles certain classes of delay problems, including the one considered here,  for a constant initial condition. As a template for the numerical simulation we used Example 9.1 in \cite{BocopExamples2019}.
We have chosen  the Gauss's II method and a discretization detail of 10 time steps per day. It can be observed that  the two control actions appear to be close  one to the other. On the other hand, it can be noted that $u_{\phi_0}$ starts its action earlier with respect to $u_{bcp}$, as illustrated in zoomed detail inside the small bottom-right box of Figure~\ref{i_greed}. Moreover, differently from the optimal one $u_{bcp}$, the control strategy 
$u_{\phi_0}$ does not provide 
any time interval at the maximum regime $\beta_\star$ (corresponding to a lockdown policy), according to what already expected in Remark ~\ref{gcsc}~{\it(2)}.

The suboptimality of  $u_{\phi_0}$ is also highlighted by the numerical computation of the costs. Considering for simplicity $G\equiv \text{Id}$ in~\eqref{eq:CostFunctionsal}, it  reads
\begin{equation}\label{eq:CostExample}
J(u_{\phi_0})=\int_0^{+\infty} u_{\phi_0}(t)\,dt=\int_0^{+\infty} \beta-b_{\phi_0}(t)\,dt
,\end{equation}
and \textsc{Matlab} returns the value  $J(u_{\phi_0})=38.766$,  while \textsc{Bocop} gives  $J(u_{{bcp}})=38.5263$. 

It is worth noting also that, while \textsc{Bocop} provides an approximated optimal solution in some minutes, the computation time of the suboptimal greedy strategy with   
\textsc{Matlab} is of the order of a couple of seconds. 

\begin{figure}[h!]
\begin{center}
\includegraphics[width=0.49\textwidth]{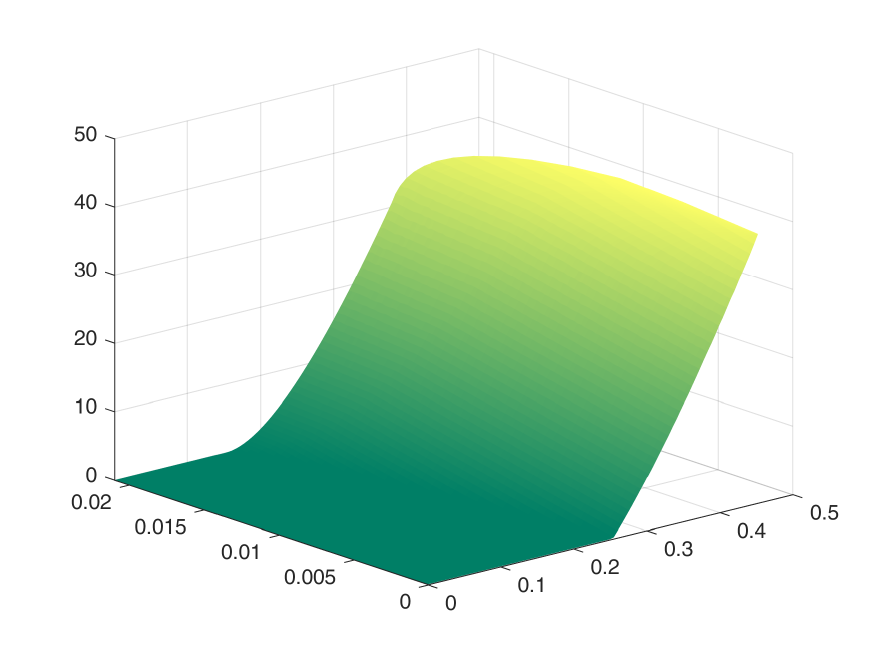}
\includegraphics[width=0.49\textwidth]{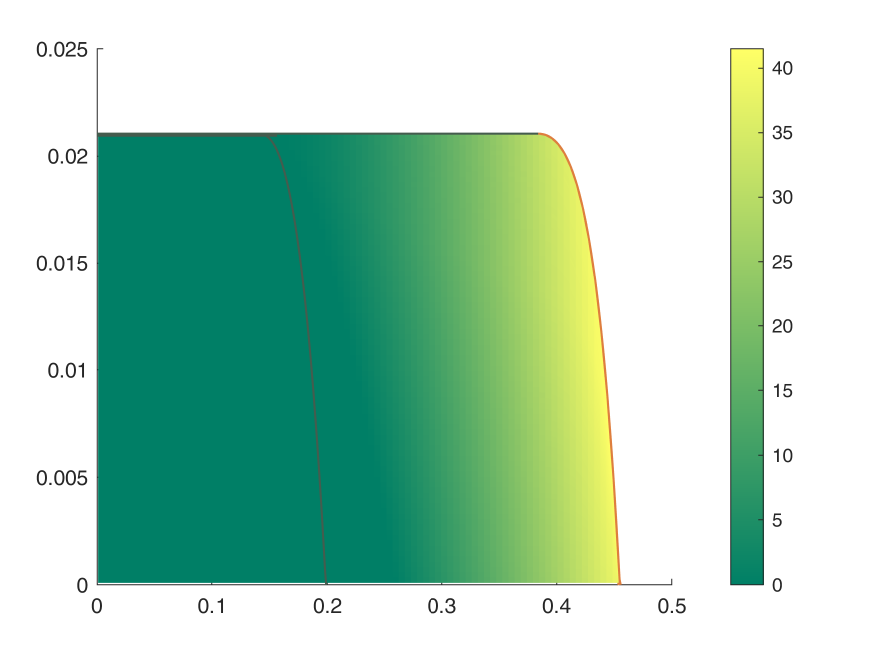}
\end{center}
\caption{
The value of the  cost $J$ for different constant initial conditions in $B$. 
On the right, the viable zone $B$ of the $(s,i)$-plane is represented  with different colors for different values of $J$.
The color scale goes from dark green, corresponding to a null cost, to light yellow corresponding to the highest values. The curve in black, crossing the set $B$, represents the relative boundary ($\Gamma_\beta$) of the set $A$. 
On the left, the 3D graph of the map $x\mapsto J(x)$.}
\label{Fig:ObjectivePlot}
\end{figure}

In order to further analyze the effectiveness and performance of the greedy control strategy, 
in Figure~\ref{Fig:ObjectivePlot} we plot the function $x\mapsto \bar J(x):= J(u_{\phi_{x}})$ by considering constant initial conditions $\phi_{x}(t)\equiv x$ in $[-h,0]$, for $x\in B$,  and where $u_{\phi_x}=\beta-b_{\phi_x}$ with $b_{\phi_x}$ defined in~\eqref{eq:ControlPolicyOpenLoop}. 
As expected, the value of $\bar J(x)$ is $0$ when $x\in A$, while $J$  increases in value as the initial condition approaches the set $S_1\cup S_2\subset \partial B$ (using the notation in Theorem~\ref{thm:ViabilityDelay}, see also Figure~\ref{Figure:FirstPlot}).
\end{example}

We then apply the greedy control policy described in Theorem~\ref{thm:GreedyControlPolicy} to different initial conditions (reaching the same point $(s_0,i_0)=(0.45,0.001)\in T$ at time $0$). This allows us to evaluate the dependence of the control, the corresponding solution and the corresponding cost, on the past/delayed behaviour of the initial condition.

\begin{example}[exponentially decaying initial condition]\em 
We now consider the situation in which at time $t=-h$ there is an initial proportion of exposed population that is not infectious, but it is recovering with rate equal to $\gamma$. We thus consider $\phi_1:[-h,0]\to \R^2$ defined by
\begin{equation}\label{eq:ExpFisIntialCond}
\phi_1(t)=(s_0, i_0e^{-\gamma t}),\quad t\in [-h,0].
\end{equation}
The constancy of the $s$-component of the initial condition can be assumed without loss of generality, because the dynamics in system~\eqref{eq:System} does not depend on the delayed value of $s$.
It can be numerically verified that $\phi_1\in \cB$, and thus the control introduced in Theorem~\ref{thm:GreedyControlPolicy} is feasible also for this initial condition.
The control $u_{\phi_1}$ and the corresponding $i$-component of the solution, $i_{\phi_1}$,  are depicted in Figure~\ref{Fig:Phi1}.
\begin{figure}[h!]
\begin{center}
\includegraphics[width=0.49\textwidth]{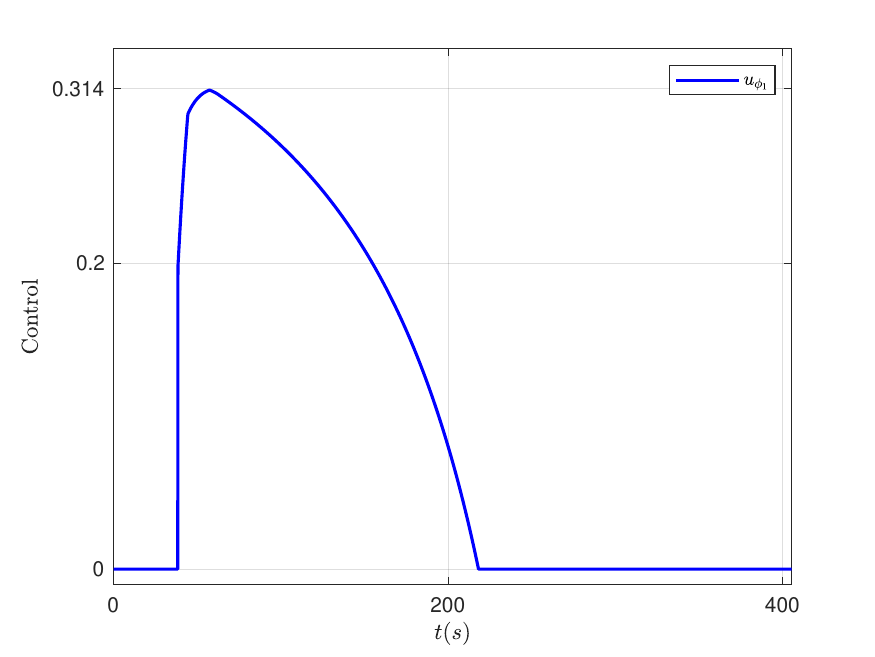}
\includegraphics[width=0.49\textwidth]{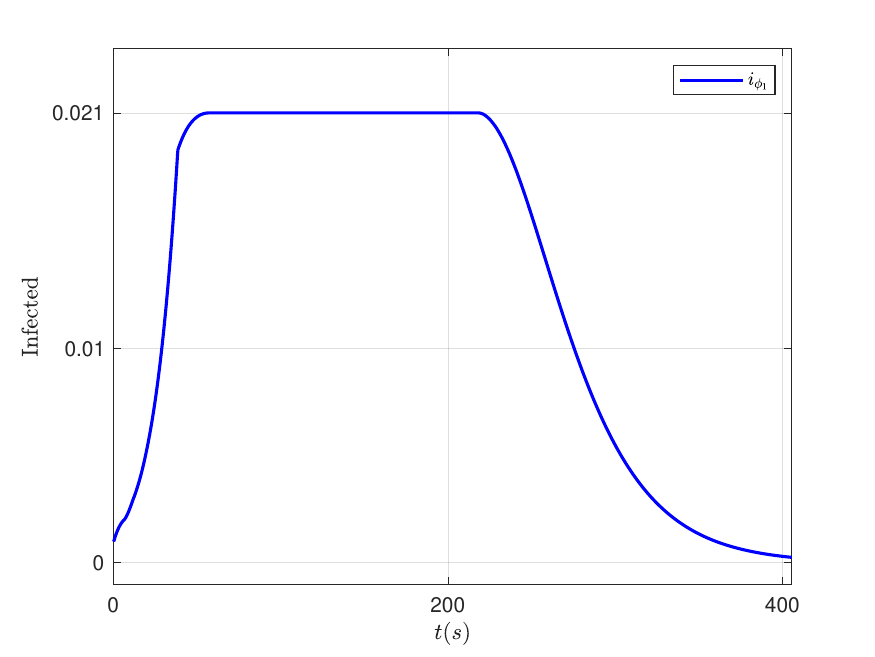}
\end{center}
\caption{The control $u_{\phi_1}$  obtained, via Theorem~\ref{thm:GreedyControlPolicy}, for the initial condition $\phi_1$ defined in~\eqref{eq:ExpFisIntialCond}, and the corresponding infected population $i_{\phi_1}$. }
\label{Fig:Phi1}
\end{figure}
Compared to the constant initial condition case depicted in Figure~\ref{i_greed}, we note that we have a slightly ``perturbed behaviour'' of  the $i$-component in the first time steps, due to the  greater value 
of the initial condition in $[-h,0]$. 
Besides this slight discrepance in the very short term,  the proposed feedback control $u_{\phi_1}$ and the corresponding behaviour of the solution are qualitatively equivalent to the ones obtained with the constant initial condition $\phi_0$.
Also the cost  $J(\phi_1)=38.799$ is again close to the cost $J(\phi_0)$ corresponding to the constant initial condition.
\end{example}

\begin{example}\em 
We finally consider a third initial condition motivated by mathematical interest. We take $\phi_2:[-h,0]\to \R^2$ defined by 
\begin{equation}\label{eq:ExpIntialCond}
\phi_2(t)=(s_0, i_0+\frac{i_M-i_0}{1-e^{-5h}}(1-e^{5t})),\quad t\in [-h,0],
\end{equation}
i.e., we consider an exponentially decreasing $i$-component, starting at $i=i_M$ at $t=-h$ and reaching $i=i_0$ at $t=0$.
The function $i_{\phi_2}:[-h,0]\to \R$ is illustrated in Figure~\ref{Fig:InitialExponential}, and it models an initial condition close to the ``worst case'' used in the proof of Theorem~\ref{thm:GreedyControlPolicy}: it remains close to the maximal admissible $i_M$ before rapidly decreasing to the initial value $i_0$. 
\begin{figure}[h!]
\begin{center}
\includegraphics[width=0.45\textwidth]{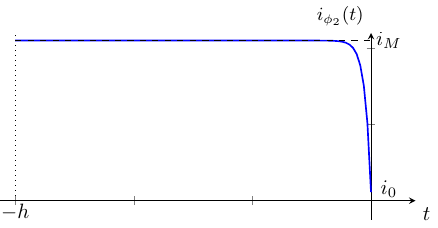}
\end{center}
\caption{The $i$-component of $\phi_2$ defined in~\eqref{eq:ExpIntialCond}, in the interval $[-h,0]$.  }
\label{Fig:InitialExponential}
\end{figure}

The resulting solution and control are depicted in Figure~\ref{Fig:Exponential}.
\begin{figure}[h!]
\begin{center}
\includegraphics[width=0.49\textwidth]{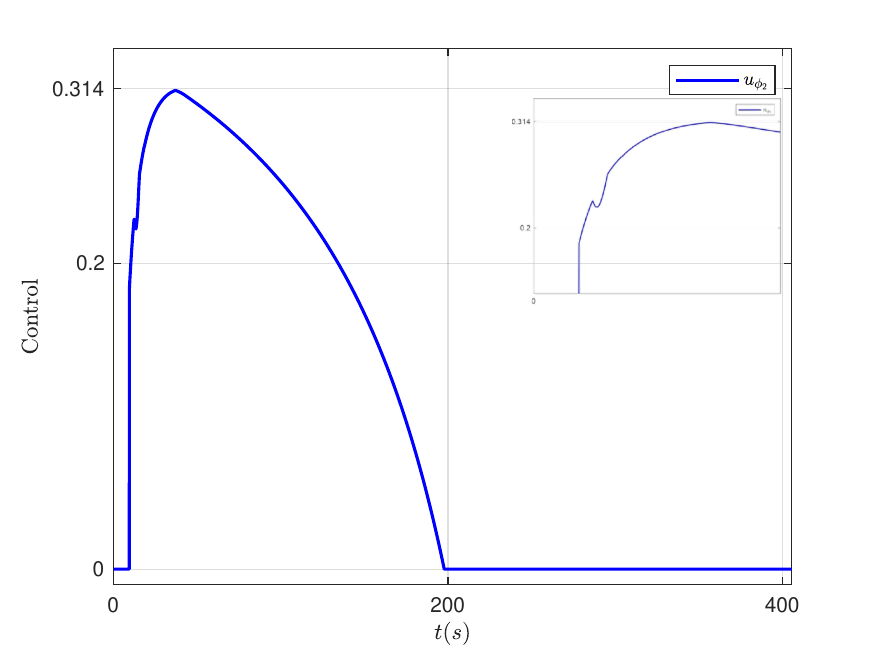}
\includegraphics[width=0.49\textwidth]{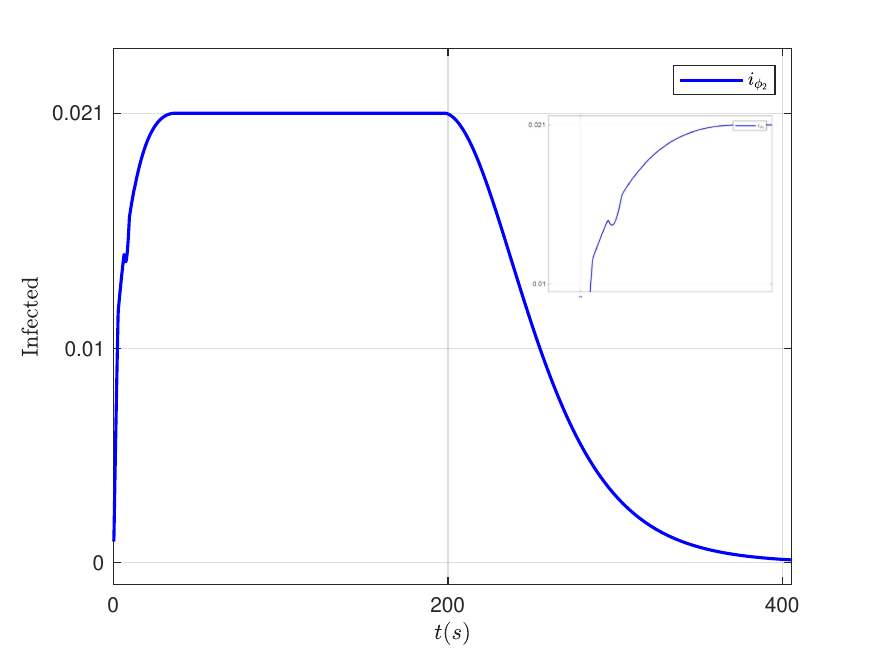}
\end{center}
\caption{The control $u_{\phi_2}$ obtained, via Theorem~\ref{thm:GreedyControlPolicy}, for the initial condition $\phi_2$ defined in~\eqref{eq:ExpIntialCond}, and the corresponding infected population $i_{\phi_2}$. In the upper-right squares, we highlight the non-monotonic behaviour of $u_{\phi_2}$ and  $i_{\phi_2}$ in the 
initial part of the epidemic horizon.}
\label{Fig:Exponential}
\end{figure}
With respect to the previous cases of Figure~\ref{i_greed} and Figure~\ref{Fig:Phi1}, we note that we have an initial non-monotonic evolution of the $i$-component and of the control $u_{\phi_2}$. We also note that the control action starts earlier, since the solution reaches earlier the curve $S_2$ defined in~Theorem~\ref{thm:GreedyControlPolicy} (as can be numerically verified). This discrepancy is also reflected by the value $J(\phi_2)=41.004$, which is  larger than the costs corresponding to the initial conditions $\phi_0$ and $\phi_1$ previously computed.






 \end{example}

\section{Conclusions}\label{Sec:Conclu}

This paper is concerned with  a  viability analysis and control synthesis of a delayed SIR model under an ICU state-constraint, where a constant delay represents an incubation/latency time.  As a by-product of the considered functional viability tools, we provided  feasible control actions driving the solutions to a safe set, according to a cost functional to be minimized.
Two scenarios were examined: in the first
the initial conditions are simply continuous, while in the other they satisfy a suitable  Lipschitz continuity assumption. In the latter case, we studied the dependence of the obtained results on the delay parameter. 
The theoretical developments have been illustrated via a numerical example inspired by the recent COVID-19 epidemic.

    As a consequence of the viability analysis,  we obtained  that the forward invariant and viable zones in the delayed case are smaller than the corresponding ones 
for the undelayed problem,  and their amplitude is non increasing as a function of the delay. This suggests, as expected, that the controller should be conservative and anticipate his/her action with respect to what he/she would have done in the absence of a delay. 
Qualitatively, on the other hand, the  control action is  similar to the case without delay, and the greedy strategy turns out to be rather effective. In practice, under viable initial conditions, it consists in putting in action the algoritms 
\begin{itemize}
    \item \eqref{eq:ControlPolicyExplicitFeedback}, if no sufficient  information  is available on the very first part of the epidemic (for instance in case of a new unknown epidemic), or
    \item  \eqref{eq:ControlPolicyExplicitFeedback1}, if the epidemic behavior is sufficiently known since the beginning.
\end{itemize}
In particular, supposing that the initial state belong to the set $B\setminus(S_1\cup S_2)$ (with $S_2$ replaced by $S_{2,L,h}$ if we are opting for the second strategy),   the controller should
do nothing till the epidemic trajectory reaches the boundary regions $S_1$ (i.e., $i=i_M$)  or $S_2$, and only then
\begin{enumerate}
    \item if $S_1$ is reached, then approximate the saturation regime  $i=i_M$ by taking $b(t)=\min\big\{\beta,\frac{\gamma i_M}{s_\phi(t)i_\phi(t-h)}\big \}$  until reaching the immunity threshold $s=\frac{\gamma}{\beta}$,
    \item if $S_2$ is reached, then keep the trajectory close to the boundary by taking $b(t)=\min \big\{\beta,\beta_\star\frac{\psi_{L,h}(i(t))}{i(t-h)} \big\}$ (with $\psi_{L,h}(i)=i$ if we are opting for the first strategy) until the set $S_1$ is reached, and then proceed as in the previous step. 
\end{enumerate}
Of course if, instead, the initial state belongs to $S_1$ (resp.\ $S_2$ or $S_{2,L,h}$) then it is enough to implement the control action (1) (resp.\ (2)) since the beginning.

This control scheme is provided in a state-feedback form and thus the controller only needs to observe the evolution of the epidemic in the previous $h$ time units to efficiently implement the strategy.

\appendix
 \section{Technical Proofs}
 In this Appendix we collect some technical results.

 \begin{lemma}\label{lemma:WellPosednessSoltuions}
Assume that $ \wt F(\phi)\in \cD_\cB(\phi)$ for any $\phi\in \cB$.  Then, there exists a solution $x_\phi$ to the Cauchy problem~\eqref{eq:ClosedLoop}. Moreover, any maximal solution is global, i.e., $\text{dom}(x_\phi)=[-h,+\infty)$, for all $\phi\in \cB$, and it holds that $S(t)x_\phi\in \cB,\;\;\forall t\in\R_+$. A similar result holds when $\cB$ is replaced by $\cB_{L,h}$, and $\wt F$ is replaced by $F_{L,h}$.
 \end{lemma}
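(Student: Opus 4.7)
The plan is to build solutions via a Haddad--Aubin-style Euler discretization adapted to the single-valued greedy direction $\wt F$, in the spirit of the proof of Theorem~12.2.2 in~\cite{Aubin2009}. I treat the case of $\cB$ in detail; the $\cB_{L,h}$ variant follows by the same scheme, with the extra Lipschitz constraint from~\eqref{eq:EquivalenceSets2} automatically preserved by the construction.

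Fix $T>0$. For each $n\in\N$ I construct a piecewise-affine $x^n:[-h,T]\to\R^2$ iteratively: set $x^n|_{[-h,0]}=\phi$ and, having defined $x^n$ on $[-h,t_k^n]$ with $S(t_k^n)x^n\in\cB$, let $v_k^n:=\wt F(S(t_k^n)x^n)$. By the standing hypothesis, $v_k^n\in\cD_\cB(S(t_k^n)x^n)$, which by Lemma~\ref{lemma:ConesCharacterization} equals $T_B(x^n(t_k^n))$. Via Definition~\ref{DefnBoul}, pick $\tau_k^n\in(0,1/n]$ and $w_k^n\in\R^2$ with $|w_k^n-v_k^n|\le 1/n$ and $x^n(t_k^n)+\tau_k^n w_k^n\in B$, and extend $x^n$ affinely with slope $w_k^n$ on $[t_k^n,t_k^n+\tau_k^n]$. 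Since $\wt F(\phi)=f(\phi,\bt(\phi))$ with $\bt\in[\beta_\star,\beta]$ and $\phi(0)\in T$, the slopes are uniformly bounded, so $x^n$ reaches $T$ in finitely many steps and is equi-Lipschitz with range contained in the compact set $B$. By Ascoli--Arzel\`a a subsequence $x^{n_k}$ converges uniformly to some $x^*:[-h,T]\to\R^2$, with $(x^{n_k})'\rightharpoonup(x^*)'$ weakly-$*$ in $L^\infty$; closedness of $B$ then yields $S(t)x^*\in\cB$ for every $t\in[0,T]$.

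The main obstacle is identifying $(x^*)'(t)=\wt F(S(t)x^*)$ for a.e.~$t$, since $\wt F$ is discontinuous across $\partial S_1\cup\partial S_2$. My plan is to argue locally in time: on the (relatively) open set where $x^*(t)\in\mathrm{int}(B)\setminus(S_1\cup S_2)$, the greedy rule reduces to the continuous map $\phi\mapsto f(\phi,\beta)$, and pointwise convergence $\wt F(S(t_k^n)x^n)\to f(S(t)x^*,\beta)$ combined with the weak-$*$ convergence of slopes identifies the ODE; on the open portions where $x^*(t)$ lies in the relative interiors of $S_1$ or of $S_2$, the closed-form expressions in~\eqref{eq:ControlPolicy} are continuous in $(s_\phi(0),i_\phi(0),i_\phi(-h))$, so the same passage to the limit applies. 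The complementary transition set has Lebesgue measure zero on each trajectory (crossings of the one-dimensional curves $\partial S_1,\partial S_2$ are isolated by the transversality of $\wt F$ there), and no verification is needed on a null set. Global existence on $[-h,+\infty)$ then follows by a standard diagonal/extension procedure, using Lemma~\ref{lemma:Well-PosednessLemma} to rule out blow-up.

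For the $\cB_{L,h}$-case, the same Euler construction is used after replacing $T_B(x^n(t_k^n))$ by $T_{B_{L,h}}(x^n(t_k^n))\cap B_{|\cdot|_{\max}}(0,L)$ in accordance with~\eqref{eq:EquivalenceSets2}; the uniform bound $|w_k^n|_{\max}\le L$ passes to the limit so that $x^*$ is $L$-Lipschitz on $[0,T]$ and, together with $\phi\in\cC_{L,|\cdot|_{\max}}$, gives $S(t)x^*\in\cB_{L,h}$ for every $t$.
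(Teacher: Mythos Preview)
Your limit-identification step has a genuine gap on the set of times where $x^*(t)\in S_1\cup S_2$, which is typically of positive measure (that is where the greedy control does nontrivial work). You write that ``the closed-form expressions in~\eqref{eq:ControlPolicy} are continuous in $(s_\phi(0),i_\phi(0),i_\phi(-h))$, so the same passage to the limit applies,'' but this tacitly assumes that the Euler nodes $x^n(t_k^n)$ also lie on $S_1$ (resp.\ $S_2$). They need not: the nodes are only guaranteed to lie in $B$, and a sequence in $B$ converging to a point of $S_1\subset\partial B$ may sit entirely in $\mathrm{int}(B)$, where the greedy rule returns $b=\beta$. In that case $\wt F(S(t_k^n)x^n)=f(S(t_k^n)x^n,\beta)$ does \emph{not} converge to $\wt F(S(t_0)x^*)$, and the weak-$*$ limit of the piecewise-constant derivatives is in general only a convex combination of the two regimes---a Filippov-type chattering limit, not the single value $\wt F(S(t)x^*)$. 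Your identification $(x^*)'(t)=\wt F(S(t)x^*)$ a.e.\ therefore lacks justification exactly on the interesting part of the trajectory. (Your remark that crossings of ``the one-dimensional curves $\partial S_1,\partial S_2$'' are isolated is also off: $S_1,S_2$ are themselves one-dimensional, so their relative boundaries are finite point sets; the real difficulty is not at those endpoints but along the whole of $S_1\cup S_2$.)

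The paper handles this via a regularize-then-select argument that is precisely the missing ingredient. It replaces $\wt F$ by its Krasovskii convexification $\wt G$, verifies that $\wt G$ is upper semicontinuous with convex compact values, and applies Haddad's viability theorem to the \emph{inclusion} $x'\in\wt G(S(t)x)$ to obtain a viable solution with $S(t)x^*\in\cB$ for all $t$. The crucial second step is the selection lemma
\[
\wt G(\phi)\cap D_\cB(\phi)=\{\wt F(\phi)\}\qquad\forall\,\phi\in\cB,
\]
proved by computing normals to $B$ along $S_1$ and $S_2$ and showing that every direction in $\wt G(\phi)$ other than $\wt F(\phi)$ points strictly outward. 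This forces any viable solution of the relaxed inclusion to satisfy $(x^*)'(t)=\wt F(S(t)x^*)$ a.e. Your Euler construction is essentially a hand-built proof of Haddad's theorem for $\wt G$; what it cannot do on its own is collapse the convexified velocities back to the single greedy one---for that you need the tangent-cone selection argument, which you have not supplied.
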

\begin{proof}
To prove the existence of solutions, we consider first a convex regularization of the (possibly discontinuous) map $\wt F:\cB\to \R^2$, by considering the set-valued map $\wt G:\cB\toS \R^2$, defined by
\begin{equation}\label{wtGdef}
\wt G(\phi)=\begin{cases}
\{\wt F(\phi)\}=\{f(\phi,\beta)\},&\text{ if } \phi(0)\in B\setminus (S_1\cup S_2),\\
\text{co}\left \{f(\phi,\beta), \wt F(\phi)\right\},&\text{ if } \phi(0)\in S_1\cup S_2.
\end{cases}
\end{equation}
Note that if $\phi(0)\in S_1$, we can also write $\wt G(\phi)=\text{co}\{f(\phi,\beta), f(\phi, \min\{\beta,\frac{\gamma i_M}{s_\phi(0)i_\phi(-h)} \}) \}$, while, if  $\phi(0)\in S_2$ we have $\wt G(\phi)=\text{co}\{f(\phi,\beta), f(\phi, \min\{\beta,\beta_\star\frac{i_M}{i_\phi(-h)} \}) \}$. The map $\wt G:\cB\toS \R^2$ is obtained by considering the  Krasovskii regularization (see~\cite[Definition 2.2]{Hajek} for the definition for finite dimensional maps) of the  $\wt F:\cB \to \R^2$ in $\cB$, defined by
\[
\wt G(\phi)=\bigcap_{\varepsilon >0} \text{co} \{\wt F(\varphi)\;\;\vert\;\varphi\in \mathds{B}_\infty(\phi,\varepsilon)\cap \cB\},
\]
where $\mathds{B}_\infty(\phi,\varepsilon):=\{\varphi\in \cC\;\vert\;\sup_{t\in [-h,0]}|\varphi(t)-\phi(t)|_{\max}\leq \varepsilon\}$.
By definition, $\wt G:\cB \to \R^2$ has non-empty, compact and convex values, and $\wt F(\phi)\in \wt G(\phi)$ for all $\phi\in \cB$. We now prove that $\wt G$ is also upper semicontinuous, i.e., 
\[
\forall\, \phi\in \cB,\;\forall \;\varepsilon>0\;\exists \,\delta>0 \text{ such that } \varphi\in \mathds{B}_\infty(\phi,\delta)\cap\mathcal{B}\;\Rightarrow\;\wt G(\varphi)\subseteq \wt G(\phi)+B_{|\cdot|_{\max}}(0,\varepsilon).
\]
Let us take $\phi\in \cB$ and proceed by cases.
\begin{enumerate}[leftmargin=*]
    \item
If $\phi(0)\in B \setminus (S_1\cup S_2)$, the conclusion easily follows  by observing that $\wt G(\phi)=\{f(\phi,\beta)\}$, the map $f(\cdot,\beta):\cB\to \R^2$ is continuous with respect to $\|\cdot\|_\infty$ in $\cB$ and  the set $B \setminus (S_1\cup S_2)$ is  relatively open in $B$.
\item If $\phi(0)\in S_1\setminus S_2$, we distinguish the following two sub-cases. 
\begin{enumerate}
    \item 
If $i_\phi(-h)=0$, {there exists $\delta>0$ such that, if $\|\varphi-\phi\|_\infty\leq \delta$} then
\[
{\beta\leq \frac{\gamma i_M}{(\frac{\gamma}{\beta_\star}+\delta)\delta }\leq}\frac{\gamma i_M}{s_\varphi(0)i_\varphi(-h)}, 
\]
and thus $b(\varphi)=\beta$, implying $\wt G(\varphi)=\{f(\varphi,\beta)\}$, and concluding the proof in this case.
\item In the case $i_\phi(-h)>0$, we note that for any $\varepsilon'>0$ there exists $\delta'>0$ such that $\|\phi-\varphi\|_\infty\leq \delta'$ implies 
\[
\Big|\frac{\gamma i_M}{s_\phi(0)i_\phi(-h)}-\frac{\gamma i_M}{s_{\varphi}(0)i_{\varphi}(-h)}\Big|\leq \varepsilon'.
\]
Given $\varepsilon>0$, by continuity of $f:\cC\times U\to \R^2$,  of the $\min$ operator, and by the previous inequality,  there exists a $\delta>0$ such that $\|\phi-\varphi\|_\infty\leq \delta$ implies 
\[
\begin{aligned}
\wt G(\varphi)&= \text{co}\{f(\varphi,\beta), f(\varphi, \min\{\beta,\frac{\gamma i_M}{s_{\varphi}(0)i_{\varphi}(-h)} \}) \}
\\&\subseteq \text{co}\left\{f(\phi,\beta)+B_{|\cdot|_{\max}}(0,\varepsilon), f(\phi, \min\{\beta,\frac{\gamma i_M}{s_\phi(0)i_\phi(-h)} \}+B_{|\cdot|_{\max}}(0,\varepsilon) \right\}
\\&=\text{co}\{f(\phi,\beta), f(\phi, \min\{\beta,\frac{\gamma i_M}{s_\phi(0)i_\phi(-h)} \}) \}+B_{|\cdot|_{\max}}(0,\varepsilon)=\wt G(\phi)+B_{|\cdot|_{\max}}(0,\varepsilon),
\end{aligned}
\]
where we used the fact that, for any $z_1,z_2\in \R^n$ and any convex set $K\subset \R^n$ it holds that 
\[
\text{co}\{z_1+K,z_2+K\}= \text{co}\{z_1,z_2\}+K.
\]
We have thus proved the upper semicontinuity in this case.
\end{enumerate}
\item
Let $\phi(0)\in S_2\setminus S_1$. The case $i_\phi(-h)=0$ can be treated as in the previous case.
If $i_\phi(-h)>0$, the claim follows by continuity of the function $i\mapsto \beta_\star\frac{i_M}{i}$ when $i\neq 0$, by adapting the argument of the previous case.
\item 
For $\phi(0)\in S_1\cap S_2=\{(\frac{\gamma}{\beta_\star},i_M)\}$, it suffices to recall that in this case we have 
\[
\frac{\gamma i_M}{s_\phi(0)i_\phi(-h)}=\beta_\star\frac{i_M}{i_\phi(-h)},
\]
and the same continuity argument can thus be applied.
\end{enumerate}

Summarizing, the set-valued map $\wt G:\cC\to \R^n$ is upper semicontinuous with non-empty, compact and convex values.  By assumption, $\wt F(\phi)\in \cD_\cB(\phi)$ for all $\phi\in \cB$.
Since $\wt F(\phi)\in \wt G(\phi)$ for all $\phi\in \cB$, this implies
\[
\wt G(\phi)\cap D_\cB(\phi)\neq \varnothing \quad \forall \,\phi\in \cB.
\]
We can thus apply~\cite[Theorem 1.1]{Haddad81} (which is the local version of Theorem~\ref{prop:ViabilityAubin}) proving that, for any $\phi\in \cB$ there exist a $\tau>0$ and  a function  $x_\phi:[-h,\tau)\to \R^2$   satisfying 
\begin{equation}\label{eq:EnlargedClosedLoop}
\begin{aligned}
&x'_\phi(t)\in \wt G(S(t)x_\phi) \ \text{ for a.e.\ }t\in [0,\tau), \\
&S(0)x_\phi=\phi,\\
&S(t)x_\phi\in \cB\ \  \forall\, t\in [0,\tau).\\
\end{aligned}
\end{equation}
The fact that $\tau=+\infty$, i.e. that maximal solutions are defined on $[-h,+\infty)$, follows again by viability analysis, since $B$ is compact and thus solutions cannot explode in finite time, see~\cite[Page 12, Proof of Theorem 1.1]{Haddad81}.

To conclude the proof we show that, for any $\phi\in \cB$,  the only viable direction in $\wt G(\phi)$ is given by the vector $\widetilde F(\phi)$, that is 
\begin{equation}\label{ftgtdb}
\{\wt F(\phi)\}=\wt G(\phi)\cap D_\cB(\phi)\ \forall \,\phi\in \cB, 
\end{equation}
thus proving that any solution to~\eqref{eq:EnlargedClosedLoop} is a solution to~\eqref{eq:ClosedLoop}.

By definition of $\wt G$ (see \eqref{wtGdef} and the two lines after) the claim is trivially true if, either,
\begin{itemize}
\item $\phi(0)\in B\setminus (S_1\cup S_2)$, or
\item $\phi(0)\in S_1$ and $\beta\leq \frac{\gamma i_M}{s_\phi(0)i_\phi(-h)}$, or
\item $\phi(0)\in S_2$ and $\beta\leq \beta_\star\frac{i_M}{i_\phi(-h)}$.
\end{itemize}
 We then proceed  by analyzing the following two remaining cases, only. 
 To this aim it is useful to note that, by Lemma \ref{lemma:ConesCharacterization}, the claimed proposition \eqref{ftgtdb} is equivalent to
\begin{equation}\label{ftgtTb}
\{\wt F(\phi)\}=\wt G(\phi)\cap T_\cB(\phi(0))\ \forall \,\phi\in \cB.
\end{equation}
\begin{enumerate}[leftmargin=*]
    \item Let $\phi(0)\in S_1$ and $\beta> \frac{\gamma i_M}{s_\phi(0)i_\phi(-h)}$. Considering the vector $v=(0,1)$, normal to $B$ at $\phi(0)$, and recalling \eqref{eq:DefnDelSistema}, we have 
\[
\inp{v}{f(\phi, \frac{\gamma i_M}{s_\phi(0)i_\phi(-h)})}
=0,
\]
and 
\[
\inp{v}{f(\phi,\beta)}= \beta s_\phi(0)i_\phi(-h)-\gamma i_\phi(0)>0.
\]
Hence, by convexity, $\inp{v}{g}>0$ for all $g\in \wt G(\phi)$, $g\neq \widetilde F(\phi)$. This proves \eqref{ftgtTb}, and hence  \eqref{ftgtdb}, in the current case.
\item If $\phi(0)\in S_2$ and $\beta> \beta_\star\frac{i_M}{i_\phi(-h)}$, we can argue as in the first part of the proof of Theorem~\ref{thm:GreedyControlPolicy}, case (c).  
There, we proved that, denoted by $w\in \R^2$ the unique normal vector (modulo positive scalar multiplication) to $B$ at $\phi(0)$, it holds
\[
\inp{w}{f(\phi,b(\phi))}=\inp{w}{f(\phi, \beta_\star\frac{i_M}{i_\phi(-h)})}=0,
\]
and $\inp{w}{f(\phi,\beta)}>0$. Now, by convexity, we have $\inp{w}{f}>0$ for all $f\in \wt G(\phi)$, $f\neq \widetilde F(\phi)$, concluding the proof.
\end{enumerate}
\end{proof}

\begin{lemma}\label{lemma:ConvexityAuxiliaryLocallyLipschitz}
For any value of $\gamma>0$, $\beta>\beta_\star>0$, $0<i_M\leq 1$ and for any $h>0$, the functions $\Gamma_{\beta,L,h}: [\frac{\gamma}{\beta},\widehat s_{\beta,L,h})\to \R_+$ and $\Gamma_{\beta_\star,L,h}: [\frac{\gamma}{\beta_\star},\widehat s_{\beta_\star,L,h})\to \R_+$ defined in Subsection~\ref{Subsec:locallyLispchitz} are strictly decreasing and concave.  Moreover, the sets $A_{L,h}$ and $B_{L,h}$ defined in Theorem~\ref{lemma:ViabilityDelayLipschitz} are convex.
\end{lemma}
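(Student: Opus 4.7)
The plan is to exploit the parametric description of $\Gamma_{\beta,L,h}$ as the graph, in the $(s,i)$-plane, of the trajectory $\Psi_{\beta,L,h}=(\ws_{\beta,L,h},\wi_{\beta,L,h})$ of \eqref{eq:Non-LinearBeta} on $[0,T_{\beta,L,h})$, along which $\ws$ is strictly increasing. Writing $f(i):=\psi_{L,h}(i)$, one reads from \eqref{tfLh} that $f$ is Lipschitz with $f'\in\{0,1\}$ a.e., that $f(i)>0$ on $[0,i_M]$, and that $f(i)-if'(i)$ equals $i_M$ on the plateau and $Lh$ on the linear piece, hence is strictly positive. The argument for $\Gamma_{\beta_\star,L,h}$ is identical, so I restrict attention to $\Gamma_{\beta,L,h}$ and drop indices, writing $(\ws,\wi)$ and $\Gamma$.

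For strict monotonicity I would introduce $H(t):=\beta\ws(t)f(\wi(t))-\gamma\wi(t)=-\wi'(t)$, noting $H(0)=\beta(\gamma/\beta)i_M-\gamma i_M=0$. Differentiating in the a.e.\ sense and substituting $\ws'=\beta\ws f(\wi)$ and $\wi'=-H$ produces the linear ODE
\begin{equation*}
H'(t)+\bigl(\beta\ws(t)f'(\wi(t))-\gamma\bigr)H(t)=\beta^{2}\ws(t)f(\wi(t))^{2},\qquad \text{a.e.\ }t\in[0,T_{\beta,L,h}),
\end{equation*}
with strictly positive right-hand side. Multiplying by the integrating factor $\mu(t):=\exp\bigl(\int_{0}^{t}(\beta\ws f'(\wi)-\gamma)\,d\tau\bigr)$ makes $\mu H$ absolutely continuous with non-negative, in fact strictly positive derivative on $(0,T_{\beta,L,h})$, and $\mu(0)H(0)=0$. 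Hence $H(t)\geq 0$ everywhere and $H(t)>0$ for $t>0$, i.e.\ $\wi'\leq 0$ with strict inequality for $t>0$; combined with $\ws'>0$ this gives $\Gamma'(s)<0$ on $(\gamma/\beta,\widehat s_{\beta,L,h})$.

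For concavity I would use the implicit-function identity $\Gamma''(\ws)=(\wi''\ws'-\wi'\ws'')/\ws'^{3}$. A direct calculation based on $\ws'+\wi'=\gamma\wi$, $\ws'=\beta\ws f(\wi)$ and $\ws''=\beta^{2}\ws f(\wi)^{2}+\beta\ws f'(\wi)\wi'$ collapses the numerator to
\begin{equation*}
\wi''\ws'-\wi'\ws''=\gamma\beta\ws\bigl\{\wi'\,[f(\wi)-\wi f'(\wi)]-\beta\wi f(\wi)^{2}\bigr\}.
\end{equation*}
By the previous step $\wi'\leq 0$, and by the preliminary remark $f(\wi)-\wi f'(\wi)>0$, so the bracket is strictly negative on $(0,T_{\beta,L,h})$. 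Hence $\Gamma''<0$ a.e. Since $\Gamma'(s)=-1+\gamma\Gamma(s)/(\beta s f(\Gamma(s)))$ is the composition of Lipschitz maps with denominator bounded away from zero, $\Gamma'$ is absolutely continuous, and the a.e.\ sign of $\Gamma''$ forces $\Gamma'$ to be non-increasing, whence $\Gamma$ is concave; a direct substitution at the unique kink $\wi=i_M-Lh$ shows the left and right limits of $\Gamma'$ coincide (both equal $-1+\gamma\wi/(\beta\ws i_M)$), so $\Gamma$ is in fact $C^{1}$.

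Finally, for convexity of $A_{L,h}$, evaluating at $s=\gamma/\beta$ gives $\Gamma'(\gamma/\beta^{+})=-1+\gamma i_M/(\beta(\gamma/\beta)i_M)=0$, so extending $\Gamma$ to $[0,\widehat s_{\beta,L,h}]$ by the constant $i_M$ on $[0,\gamma/\beta]$ yields a $C^{1}$ function with non-increasing derivative, hence concave on the whole interval. Since
\begin{equation*}
A_{L,h}=T\cap\{(s,i):0\leq s\leq\widehat s_{\beta,L,h},\ 0\leq i\leq\Gamma(s)\}
\end{equation*}
is the intersection of the convex set $T$ with the hypograph of a concave function, it is convex; the same argument with $\beta$ replaced by $\beta_\star$ delivers convexity of $B_{L,h}$. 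The main technical obstacle is the loss of smoothness of $f$ at $i=i_M-Lh$ and the resulting a.e.\ character of all derivative identities; the integrating-factor argument is precisely what is needed to promote an a.e.\ sign for $H'$ into the pointwise inequality $H\geq 0$ that drives the rest of the proof.
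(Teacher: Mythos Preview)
Your proof is correct but takes a genuinely different route from the paper's. The paper works directly with the scalar ODE
\[
\frac{di}{ds}(s)=-1+\frac{\gamma}{\beta s}\cdot\frac{i(s)}{\psi_{L,h}(i(s))},
\]
and argues entirely by monotonicity: since $i/\psi_{L,h}(i)\leq 1$ for $i\in[0,i_M]$, one gets $i'(s)\leq -1+\gamma/(\beta s)<0$ for $s>\gamma/\beta$, which gives strict decrease without any integrating factor; concavity then follows by noting that the map $z\mapsto z/\psi_{L,h}(z)$ is increasing (it is $z/i_M$ on the plateau and $z/(z+Lh)$ on the linear part), so that $s\mapsto i(s)/\psi_{L,h}(i(s))$ is decreasing by composition, and hence $i'(s)$ is decreasing. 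No second derivative is ever computed. Your approach via the auxiliary function $H(t)=-\wi'(t)$ and an integrating factor for monotonicity, together with the parametric identity $\Gamma''(\ws)=(\wi''\ws'-\wi'\ws'')/\ws'^{3}$ and the simplification $\wi''\ws'-\wi'\ws''=\gamma\beta\ws\{\wi'[f(\wi)-\wi f'(\wi)]-\beta\wi f(\wi)^{2}\}$, is more computational but equally valid; it has the virtue of making the strict inequality $\Gamma''<0$ explicit and of handling the $C^{1}$-regularity of $\Gamma$ at the kink cleanly. The paper's argument is shorter and requires no a.e.\ calculus, while yours gives sharper quantitative information at the cost of more algebra.
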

\begin{proof}
 We recall that $\Gamma_{\beta,L,h}:[\frac{\gamma}{\beta},\widehat s_{\beta,L,h})\to \R$ is defined as the graph in the plane $(s,i)$ of the solution of~\eqref{eq:Non-LinearBeta} with initial condition $(s(0),i(0))=(\frac{\gamma}{\beta},i_M)$. Since $s\psi_{L,h}(i)>0$ for all $s\in [\frac{\gamma}{\beta},\widehat s_{\beta,L,h})$ and $i\in [0,+\infty)$, we can divide the equations in~\eqref{eq:Non-LinearBeta} by this quantity,  obtaining that $\Gamma_{\beta,L,h}$ is solution to the differential equation
\[
\frac{d}{ds}\,i(s)=-1+\frac{\gamma i(s)}{\beta s\psi_{L,h}(i(s))},
\]
with initial condition $i(\frac{\gamma}{\beta})=i_M$, in the interval $[\frac{\gamma}{\beta},\widehat s_{\beta,L,h})$. For simplicity, in what follows we write $\Gamma_{\beta,L,h}(s):=i(s)$, for all $s\in  [\frac{\gamma}{\beta},\widehat s_{\beta,L,h})$.

First we note that $\frac{di}{ds}(\frac{\gamma}{\beta})=0$ and
\[
\frac{d^2i}{ds^2}(\frac{\gamma}{\beta})=-\frac{1}{\gamma}<0.
\]
This implies that  $i$ is strictly decreasing in a right-neighborhood $N$ of $s(0)=\frac{\gamma}{\beta}$ and, hence,  $i(s)=\Gamma_{\beta,L,h}(s)<i_M$ for all $s\in N$.
On the other hand, as long as $i\leq i_M$ and  $s>\frac{\gamma}{\beta}$, we have
\[
\frac{di}{ds}(s)\leq-1+\frac{\gamma}{\beta s}<0,
\] 
since $\frac{i}{\psi_{L,h}(i)}\leq 1$ for all $i\leq i_M$. This implies  that 
$i$ is strictly decreasing in $[\frac{\gamma}{\beta}, \widehat s_{\beta,L,h})$, and by definition of $\widehat s_{\beta,L,h}$ we also have that $i(s)>0$ for all $s\in [\frac{\gamma}{\beta}, \widehat s_{\beta,L,h})$. We now show that its derivative is also decreasing, proving concavity. By definition of $\psi_{L,h}(i(s))$, we have
\[
\frac{i(s)}{ \psi_{L,h}(i(s))}=\begin{cases}
\frac{i(s)}{i_M},\;\;\;&\text{if }i_M-Lh\leq i(s)\leq  i_M, \\[1ex]
\frac{i(s)}{i(s)+Lh},\;\;\;&\text{if } 0\leq i(s)\leq i_M-Lh.
\end{cases}
\]

We note that both the functions $z\mapsto \frac{z}{i_M}$ and $z\mapsto \frac{z}{z+Lh}$ are strictly increasing in $[0,+\infty)$; since we have proved that $i(s)$ is strictly decreasing in $[\frac{\gamma}{\beta}, \widehat s_{\beta,L,h})$, so is  $s\mapsto \frac{i(s)}{\psi_{L,h}(i(s))}$. Then the function
\[
s\mapsto \frac{di}{ds}(s)= -1+\frac{\gamma i(s)}{\beta s\psi_{L,h}(i(s))},
\]
is also decreasing, concluding the proof of concavity of $\Gamma_{\beta,L,h}$ in $[\frac{\gamma}{\beta}, \widehat s_{\beta,L,h})$.


The set $A_{L,h}$ defined in~\eqref{eq:DefinitionAElleAcca}  is then convex, since defined as the intersection of the convex set $T$ with the hypo-graph of a concave function.

The proof for $\Gamma_{\beta_\star,L,h}: [\frac{\gamma}{\beta_\star},\widehat s_{\beta_\star,L,h})\to \R_+$ is similar.
\end{proof}

\begin{lemma}\label{lemma:AppendixGronwall}
Given an interval $I=[0,\tau)$ (with, possibly, $\tau=+\infty$), consider a continuous function $g:\R^n \to \R^n$ and  $x_0\in \R^n$. Suppose there exists $x:I\to \R^n$, solution to
\[
 x'(t)=g(x(t)),\;\;\;\;x(0)=x_0,
\] 
 and that there exists a compact set $Q\subset \R^n$ such that $x(t)\in Q$ for all $t\in I$. 
For a given $a>0$, for any $h\in (0,a)$  consider  $g_h:\R^n\to \R^n$  such that:
\begin{itemize}[leftmargin=*]
    \item $g_h$ is uniformly converging to $g$ in $Q$ as $h\to 0$,
    \item $g_h$ are uniformly locally Lipschitz, i.e.,  for any compact set $K\subset \R^n$ there exists $M_K\geq 0$ such that 
\begin{equation}\label{eq:UniformGlobalLispchitz}
|g_h(x_1)-g_h(x_2)|\leq M_K|x_1-x_2|,\;\;\;\forall h\in (0,a),\;\forall x_1,x_2\in K.
\end{equation}
\item  $g_h$ are uniformly sublinear in norm, i.e., there exist $A>0, B>0$ such that
\begin{equation}\label{eq:SublineaRBound}
|g_h(x)|\leq A|x|+B,\;\;\;\forall h\in (0,a),\;\forall \,x\in \R^n.
\end{equation}
\end{itemize}
Let us denote by $y_h:I\to \R^n$ the solution to
\[
y'(t)=g_h(y(t)),\;\;\;y(0)=x_0.
\]
Then $g$ is Lipschitz and sublinear in norm in $Q$, and  we have 
\[
\lim_{h\to 0^+}|x(t)-y_h(t)|=0\;\;\;\forall \;t\in I.
\]
Moreover, for every compact interval $J\subset I$, there exists a sequence $h_n\to 0$ such that $y_{h_n}$ converges uniformly to $x$ on $J$.
\end{lemma}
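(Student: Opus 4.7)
The plan is to reduce everything to a single Gronwall estimate on an arbitrary compact subinterval $J=[0,T]\subset I$, after first confining all solutions $y_h$ in a common compact set.

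First, I would quickly dispatch the properties of the limit field $g$ on $Q$. By taking the pointwise limit $h\to0^+$ in the inequalities \eqref{eq:UniformGlobalLispchitz} (with $K=Q$) and \eqref{eq:SublineaRBound}, the uniform convergence $g_h\to g$ on $Q$ yields at once
\[
|g(x_1)-g(x_2)|\le M_Q|x_1-x_2|\quad\forall\,x_1,x_2\in Q,\qquad |g(x)|\le A|x|+B\quad\forall\,x\in Q,
\]
so $g$ is Lipschitz and sublinear on $Q$.

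Next, I would use the uniform sublinear bound \eqref{eq:SublineaRBound} to control the growth of each $y_h$ on $J$. Integrating and applying scalar Gronwall to $|y_h(t)|\le |x_0|+\int_0^t(A|y_h(s)|+B)\,ds$ gives $|y_h(t)|\le (|x_0|+BT)e^{AT}$ for every $t\in[0,T]$ and every $h\in(0,a)$. Hence all the $y_h$ take values in a fixed closed ball $K$, independently of $h$. Setting $K'=K\cup Q$, which is compact, the uniform local Lipschitz hypothesis provides a single constant $M:=M_{K'}\ge0$ such that $|g_h(x_1)-g_h(x_2)|\le M|x_1-x_2|$ for every $h$ and every $x_1,x_2\in K'$.

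Now I would set up the main estimate. For $t\in[0,T]$, writing
\[
x(t)-y_h(t)=\int_0^t\bigl(g(x(s))-g_h(x(s))\bigr)\,ds+\int_0^t\bigl(g_h(x(s))-g_h(y_h(s))\bigr)\,ds,
\]
using that $x(s)\in Q$ (so the first integrand is bounded by $\varepsilon_h:=\sup_{x\in Q}|g_h(x)-g(x)|$, which tends to $0$ as $h\to0$ by hypothesis) and that $x(s),y_h(s)\in K'$ (so the second integrand is bounded by $M|x(s)-y_h(s)|$), I get
\[
|x(t)-y_h(t)|\le T\,\varepsilon_h+M\int_0^t|x(s)-y_h(s)|\,ds.
\]
Gronwall's inequality then yields $|x(t)-y_h(t)|\le T\varepsilon_h e^{MT}$ for every $t\in[0,T]$. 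Since $\varepsilon_h\to0$, this simultaneously proves pointwise convergence $y_h(t)\to x(t)$ on $I$ (every $t\in I$ lies in some such $J$) and uniform convergence of $y_h$ to $x$ on every compact $J\subset I$ as $h\to0^+$; in particular the required sequence $h_n\to0$ exists trivially.

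The only delicate point is the uniform confinement step: without \eqref{eq:SublineaRBound} one could not guarantee that all $y_h$ stay in a common compact set on $J$, and the Lipschitz constant in \eqref{eq:UniformGlobalLispchitz} could blow up with $h$, invalidating the Gronwall argument. Everything else is a routine combination of uniform convergence on $Q$ with a Gronwall estimate.
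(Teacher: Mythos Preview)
Your argument is correct and follows essentially the same route as the paper: confine all $y_h$ in a common compact set via the sublinear bound and Gronwall, then split $g(x(s))-g_h(y_h(s))$ into a uniform-convergence piece and a Lipschitz piece and apply Gronwall again. The one noteworthy difference is that your estimate $|x(t)-y_h(t)|\le T\varepsilon_h e^{MT}$ is uniform in $t\in[0,T]$, so you obtain uniform convergence of the \emph{whole family} $y_h$ on each compact $J$ directly; the paper instead fixes $t$ to get pointwise convergence and then invokes Ascoli--Arzel\`a to extract a uniformly convergent subsequence, which your approach shows to be unnecessary.
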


\begin{proof}
Let us fix $t\in I$. By~\eqref{eq:SublineaRBound}, using  Grönwall's  inequality (see~\cite[Lemma 2.7]{Teschl12}) we have that 
\begin{equation}\label{eq:UnifBounded}
|y_h(t)|\leq \bar R:=|x_0|e^{At}+\frac{B}{A}(e^{At}-1), \,\;\forall h\in (0,a).
\end{equation}
Given the compact set $K=Q \cup B_{\|\cdot\|}(0,\bar R)$, consider $M_K>0$ as in~\eqref{eq:UniformGlobalLispchitz}. 
Now, for all $h\in (0,a)$, we have
\[
\begin{aligned}
	|y_h(t)-x(t)|&\leq \int_0^t |g_h(y_h(\theta))-g(x(\theta))|\,d\theta\\&\leq  \int_0^t |g_h(y_h(\theta))-g_h(x(\theta))|\,d\theta+  \int_0^t |g_h(x(\theta))-g(x(\theta))|\,d\theta\\& \leq M_K \int_0^t |y_h(\theta)-x(\theta)|\,d\theta+  \int_0^t |g_h(x(\theta))-g(x(\theta))|\,d\theta
\end{aligned}
\]
By uniform convergence, for every $\varepsilon>0$ there exists  $\overline h\in (0,a)$ such that, for any $h\leq \overline h$, it holds that (recall that $t$ is fixed)
\[
	|y_h(t)-x(t)|\leq M_K \int_0^t |y_h(\theta)-x(\theta)|\,d\theta+\varepsilon.
\]
Applying again the Gr\"onwall's Inequality (\cite[Lemma 2.7]{Teschl12}) this implies that 
\[
	|y_h(t)-x(t)|\leq\varepsilon e^{M_Kt}.
\]
We have, thus, proved that
\[
	\lim_{h\to 0}|y_h(t)-x(t)|=0, 
\]
as required. 
Let us now take a compact subinterval $J\subset I$. By~\eqref{eq:UnifBounded}, the functions $y_h$ are uniformly bounded on~$J$, i.e.,  there exists $C>0$ such that $|y_h(t)|\leq C$ for all $h\in (0,a)$ and  for all $t\in J$. Moreover, by~\eqref{eq:SublineaRBound} we have $|g_h(t)|\leq AC+B$ for all  for all $h\in (0,a)$ for all $t\in J$. In other words, the derivatives of the functions $y_h$ are uniformly bounded in $J$ and  this implies that the functions $y_h$ are equicontinuous on $J$. By Ascoli-Arzelà's  theorem, there exists a subsequence $y_{h_n}$ that uniformly converges on $J$. By the pointwise convergence proven above, we then have $y_{h_n}\to x$ uniformly on $J$, concluding the proof.
\end{proof}

\noindent
{\bf Acknowledgements.}  
DB is member of GNCS-INdAM. LF is member of GNAMPA--INdAM.

\

 \bibliography{bib_SIR_delays} 

\begin{thebibliography}{10}

\bibitem{alvarez2020simple}
F.~E. Alvarez, D.~Argente, and F.~Lippi.
\newblock A simple planning problem for {COVID-19} lockdown.
\newblock Technical report, National Bureau of Economic Research, 2020.

\bibitem{anderson1992infectious}
R.~M. Anderson, B.~Anderson, and R.~M. May.
\newblock {\em Infectious diseases of humans: dynamics and control}.
\newblock Oxford university press, 1992.

\bibitem{Aubin2009}
J.-P. Aubin.
\newblock {\em Viability Theory}.
\newblock Birkh{\"a}user Boston, Boston, MA, 2009.

\bibitem{AC_Diff_Inc}
J.-P. Aubin and A.~Cellina.
\newblock {\em Differential Inclusions. Set-Valued Maps and Viability Theory},
  volume 264 of {\em Grundlehren der mathematischen Wissenschaften [Fundamental
  Principles of Mathematical Sciences]}.
\newblock Springer-Verlag, Berlin, 1984.

\bibitem{AF2009book}
J.-P. Aubin and H.~Frankowska.
\newblock {\em Set-valued analysis}.
\newblock Modern Birkh\"auser Classics. Birkh\"auser Boston, Inc., Boston, MA,
  2009.
\newblock Reprint of the 1990 edition [MR1048347].

\bibitem{AvrFre22}
F.~Avram, L.~Freddi, and D.~Goreac.
\newblock Optimal control of a {SIR} epidemic with {ICU} constraints and target
  objectives.
\newblock {\em Appl. Math. Comput.}, 418:Paper No. 126816, 22, 2022.

\bibitem{AFGLL23}
F.~Avram, L.~Freddi, D.~Goreac, J.~Li, and J.~Li.
\newblock Controlled compartmental models with time-varying population:
  normalization, viability and comparison.
\newblock {\em J. Optim. Theory Appl.}, 198(3):1019--1048, 2023.

\bibitem{Behncke}
H.~Behncke.
\newblock Optimal control of deterministic epidemics.
\newblock {\em Optim. Control Appl. Methods}, 21(6):269--285, 2000.

\bibitem{BT1995}
E.~Beretta and Y.~Takeuchi.
\newblock Global stability of an {SIR} epidemic model with time delays.
\newblock {\em J. Math. Biol.}, 33(3):250--260, 1995.

\bibitem{BV2017}
A.~Boccia and R.~B. Vinter.
\newblock The maximum principle for optimal control problems with time delays.
\newblock {\em SIAM J. Control Optim.}, 55(5):2905--2935, 2017.

\bibitem{BocopExamples2019}
F.~Bonnans, D.~Giorgi, V.~Grelard, B.~Heymann, S.~Maindrault, P.~Martinon,
  O.~Tissot, and J.~Liu.
\newblock {Bocop – A collection of examples}.
\newblock Technical report, INRIA, 2019.

\bibitem{Brezis2011}
H.~Brezis.
\newblock {\em Functional analysis, {S}obolev spaces and partial differential
  equations}.
\newblock Universitext. Springer, New York, 2011.

\bibitem{Buttazzo89}
G.~Buttazzo.
\newblock {\em Semicontinuity, relaxation and integral representation in the
  calculus of variations}, volume 207 of {\em Pitman Research Notes in
  Mathematics Series}.
\newblock Longman Scientific \& Technical, Harlow; copublished in the United
  States with John Wiley \& Sons, Inc., New York, 1989.

\bibitem{DiLiddo86}
A.~{Di Liddo}.
\newblock A {S}-{I}-{R} vector disease model with delay.
\newblock {\em Math. Modelling}, 7(5-8):793--802, 1986.

\bibitem{FraHai18}
H.~Frankowska and I.~Haidar.
\newblock Viable trajectories for nonconvex differential inclusions with
  constant delay.
\newblock {\em IFAC-PapersOnLine}, 51(14):112--117, 2018.
\newblock 14th IFAC Workshop on Time Delay Systems TDS 2018.

\bibitem{FraMar16}
H.~Frankowska, E.~M. Marchini, and M.~Mazzola.
\newblock A relaxation result for state constrained inclusions in infinite
  dimension.
\newblock {\em Math. Control Relat. Fields}, 6(1):113--141, 2016.

\bibitem{FMM2018}
H.~Frankowska, E.~M. Marchini, and M.~Mazzola.
\newblock Necessary optimality conditions for infinite dimensional state
  constrained control problems.
\newblock {\em J. Differential Equations}, 264(12):7294--7327, 2018.

\bibitem{Fre20}
L.~Freddi.
\newblock Optimal control of the transmission rate in compartmental epidemics.
\newblock {\em Math. Control Relat. Fields}, 12(1):201--223, 2022.

\bibitem{FG2023}
L.~Freddi and D.~Goreac.
\newblock Infinite horizon optimal control of a {SIR} epidemic under an {ICU}
  constraint.
\newblock {\em J. Convex Anal.}, 31(4), 2024.

\bibitem{FGLX22}
L.~Freddi, D.~Goreac, J.~Li, and B.~Xu.
\newblock S{IR} epidemics with state-dependent costs and {ICU} constraints: a
  {H}amilton-{J}acobi verification argument and dual {LP} algorithms.
\newblock {\em Appl. Math. Optim.}, 86(2):Paper No. 23, 31, 2022.

\bibitem{Haddad81}
G.~Haddad.
\newblock Monotone viable trajectories for functional differential inclusions.
\newblock {\em J. Differential Equations}, 42(1):1--24, 1981.

\bibitem{H1984}
G.~Haddad.
\newblock Functional viability theorems for differential inclusions with
  memory.
\newblock {\em Ann. Inst. H. Poincar\'{e} Anal. Non Lin\'{e}aire},
  1(3):179--204, 1984.

\bibitem{Hale}
J.~Hale and S.M.{Verduyn Lunel}.
\newblock {\em Introduction to Functional Differential Equations}, volume~99 of
  {\em Applied Mathematical Sciences}.
\newblock Springer New York, 1993.

\bibitem{hansen2011optimal}
E.~Hansen and T.~Day.
\newblock Optimal control of epidemics with limited resources.
\newblock {\em J. Math. Biol.}, 62(3):423--451, 2011.

\bibitem{HuPap95}
S.~Hu and N.~Papageorgiou.
\newblock Delay differential inclusions with constraints.
\newblock {\em Proc. Amer. Math. Soc.}, 123(7):2141--2150, 1995.

\bibitem{Hajek}
O.~Hájek.
\newblock Discontinuous differential equations {I}, {II}.
\newblock {\em J. Differential Equations}, 32(2):149--170, 1979.

\bibitem{Kantner}
M.~Kantner and T.~Koprucki.
\newblock Beyond just “flattening the curve”: Optimal control of epidemics
  with purely non-pharmaceutical interventions.
\newblock {\em J. Math. Ind.}, 10(1):1--23, 2020.

\bibitem{kermack1927contribution}
W.~O. Kermack and A.~G. McKendrick.
\newblock A contribution to the mathematical theory of epidemics.
\newblock {\em Proc. R. Soc. Lond. A}, 115(772):700--721, 1927.

\bibitem{Ketch}
D.~I. Ketcheson.
\newblock Optimal control of an {SIR} epidemic through finite-time
  non-pharmaceutical intervention.
\newblock {\em J. Math. Biol.}, 83(1):7, 2021.

\bibitem{Kruse}
T.~Kruse and P.~Strack.
\newblock Optimal control of an epidemic through social distancing.
\newblock url={https://ssrn.com/abstract=3581295}, 2020.

\bibitem{Lauer_etal_2020}
S.~A. Lauer, K.~H. Grantz, Q.~Bi, F.~K. Jones, Q.~Zheng, H.~R. Meredith, A.~S.
  Azman, N.~G. Reich, and J.~Lessler.
\newblock The incubation period of coronavirus disease 2019 {(COVID-19)} from
  publicly reported confirmed cases: Estimation and application.
\newblock {\em Ann. Intern. Med.}, 172(9):577--582, 2020.
\newblock PMID: 32150748.

\bibitem{LMSW2020}
Z.~Liu, P.~Magal, O.~Seydi, and G.~Webb.
\newblock A covid-19 epidemic model with latency period.
\newblock {\em Infect. Dis. Model.}, 5:323--337, 2020.

\bibitem{TaoMa2022}
T.~Ma, S.~Ding, R.~Huang, H.~Wang, J.~Wang, J.~Liu, J.~Wang, J.~Li, C.~Wu,
  H.~Fan, and N.~Zhou.
\newblock The latent period of coronavirus disease 2019 with {SARS-CoV-2
  B.1.617.2} delta variant of concern in the postvaccination era.
\newblock {\em Immunity, Inflammation and Disease}, 10(7):e664, 2022.

\bibitem{Mart}
M.~Martcheva.
\newblock {\em An introduction to mathematical epidemiology}, volume~61.
\newblock Springer, 2015.

\bibitem{McC2010}
C.~C. McCluskey.
\newblock Complete global stability for an {SIR} epidemic model with
  delay---distributed or discrete.
\newblock {\em Nonlinear Anal. Real World Appl.}, 11(1):55--59, 2010.

\bibitem{Miclo}
L.~Miclo, D.~Spiro, and J.~Weibull.
\newblock Optimal epidemic suppression under an {ICU} constraint: An analytical
  solution.
\newblock {\em J. Math. Econom.}, 101:102669, 2022.

\bibitem{MR2022}
E.~Molina and A.~Rapaport.
\newblock An optimal feedback control that minimizes the epidemic peak in the
  {SIR} model under a budget constraint.
\newblock {\em Automatica J. IFAC}, 146:Paper No. 110596, 8, 2022.

\bibitem{PW2022}
B.~Patterson and J.~Wang.
\newblock How does the latency period impact the modeling of {COVID-19}
  transmission dynamics?
\newblock {\em Mathematics in Applied Sciences and Engineering}, 3(1):60--85,
  2022.

\bibitem{RockWets09}
R.~Rockafellar and R.-B. Wets.
\newblock {\em Variational Analysis}, volume 317 of {\em Gundlehren der
  mathematischen Wissenchaften}.
\newblock Springer-Verlag, Berlin, 3rd printing, 2009 edition, 1998.

\bibitem{Bocop}
I.~S. Team~Commands.
\newblock Bocop: an open source toolbox for optimal control.
\newblock http://bocop.org, 2017.

\bibitem{Teschl12}
G.~Teschl.
\newblock {\em Ordinary Differential Equations and Dynamical Systems}.
\newblock Graduate studies in Mathematics. American Mathematical Soc., 2012.

\bibitem{V2018}
R.~B. Vinter.
\newblock State constrained optimal control problems with time delays.
\newblock {\em J. Math. Anal. Appl.}, 457(2):1696--1712, 2018.

\bibitem{V2019}
R.~B. Vinter.
\newblock Optimal control problems with time delays: constancy of the
  {H}amiltonian.
\newblock {\em SIAM J. Control Optim.}, 57(4):2574--2602, 2019.

\bibitem{Wu_etal2022}
Y.~Wu, L.~Kang, Z.~Guo, J.~Liu, M.~Liu, and W.~Liang.
\newblock {Incubation Period of COVID-19 Caused by Unique SARS-CoV-2 Strains: A
  Systematic Review and Meta-analysis}.
\newblock {\em JAMA Network Open}, 5(8), 2022.

\bibitem{YYLZ2021}
C.~Yang, Y.~Yang, Z.~Li, and L.~Zhang.
\newblock Modeling and analysis of {COVID}-19 based on a time delay dynamic
  model.
\newblock {\em Math. Biosci. Eng.}, 18(1):154--165, 2021.

\bibitem{Huan_etal2022}
H.~Yang, X.~Lin, and J.~Wu.
\newblock Qualitative analysis of a time-delay transmission model for
  {COVID-19} based on susceptible populations with basic medical history.
\newblock {\em Qeios}, 08 2023.

\bibitem{YH2007}
N.~Yoshida and T.~Hara.
\newblock Global stability of a delayed {SIR} epidemic model with density
  dependent birth and death rates.
\newblock {\em J. Comput. Appl. Math.}, 201(2):339--347, 2007.

\bibitem{ZKJ2009}
G.~Zaman, Y.~H. Kang, and I.~H. Jung.
\newblock Optimal treatment of an sir epidemic model with time delay.
\newblock {\em Biosystems}, 98(1):43--50, 2009.

\end{thebibliography}
 \bibliographystyle{abbrv}
 
\end{document}